\newtheorem{thm}{Theorem}[section]
\newtheorem{prop}[thm]{Proposition}
\newtheorem{lemma}[thm]{Lemma}
\newtheorem{cor}[thm]{Corollary}
\newtheorem{definitiontemp}[thm]{Definition}
\newenvironment{defn}{\begin{definitiontemp}
\normalfont}{\end{definitiontemp}}
\def\bec{\begin{cor}}
\def\enc{\end{cor}}
\def\bet{\begin{thm}}
\def\ent{\end{thm}}
\def\becor{\begin{cor}}
\def\encor{\end{cor}}
\def\bel{\begin{lem}}
\def\enl{\end{lem}}
\def\bedef{\begin{defn}}
\def\endef{\end{defn}}
\def\bep{\begin{prop}}
\def\enp{\end{prop}}
\newcommand{\set}[2]{\ensuremath{ \{ #1 : #2 \} }}
\newcommand{\dg}{\ensuremath{\textnormal{deg}}}
\newcommand{\mdeg}[1]{\ensuremath{\textnormal{mdeg}(#1)}}
\renewcommand{\dim}[1]{\ensuremath{\textnormal{dim}(#1)}}
\newcommand{\rk}[1]{\ensuremath{\textnormal{rk}(#1)}}
\newcommand{\Z}{\mathbb{Z}}
\newcommand{\Q}{\mathbb{Q}}
\newcommand{\ZZ}{\mathbb{Z}}
\newcommand{\QQ}{\mathbb{Q}}
\newcommand{\OO}{\mathcal{O}}
\newcommand{\U}{\mathcal{U}}
\renewcommand{\hbar}{\overline{h}}
\newcommand{\Qbar}{\overline{\mathbb{Q}}}
\renewcommand{\L}{\mathcal{L}}
\newcommand{\avec}{\vec{a}}
\newcommand{\bvec}{\vec{b}}
\newcommand{\fvec}{\vec{f}}
\newcommand{\yvec}{\vec{y}}
\newcommand{\Yvec}{\vec{Y}}
\newcommand{\gq}{{\mathfrak q}}
\newcommand{\pp}{{\mathfrak p}}
\newcommand{\comment}[1]{}
\newcommand{\Gal}[2]{\text{Gal}(#1/#2)}
\DeclareMathOperator{\Frac}{Frac}
\DeclareMathOperator{\Spec}{Spec}
\DeclareMathOperator{\trdeg}{trdeg}
\DeclareMathOperator{\tr}{tr}
\def\s01{\ensuremath{\Sigma^0_1}}
\def\d02{\ensuremath{\Delta^0_2}}
\def\phi{\varphi}
\newcommand{\Subfields}{\operatorname{Sub}(\Qbar)}
\begin{document}

\title{A topological approach to undefinability\\
	 in algebraic extensions of $\QQ$}
%\author{Kirsten Eisentr\"ager\thanks{Partially supported by National Science Foundation
%   award CNS-1617802 and a Vannevar Bush Faculty Fellowship from the
%   US Department of Defense. } , 
%Russell Miller\thanks{Partially supported by Grant \# 581896
%from the Simons Foundation and by the City University of New York
%PSC-CUNY Research Award Program. } ,
%Caleb Springer\thanks{Partially supported by National Science Foundation
%   award CNS-1617802. }, 
%\& Linda %Brown
%Westrick\thanks{Partially supported by the Cada R. and Susan Wynn Grove Early Career Professorship in Mathematics.}
%\thanks{This project began during a workshop at the American Institute of Mathematics
%in May 2019.  The authors wish to acknowledge useful conversations with Tom Tucker.}}

\author{Kirsten Eisentr\"ager}
\address{
\parbox{\dimexpr\textwidth -\parindent}{ \hspace*{0.1in}Department of Mathematics, The Pennsylvania State University, University Park, PA 16802, USA}}
\email{kxe8@psu.edu}
%\urladdr{\url{http://personal.psu.edu/kxe8/}}

\author{Russell Miller}
\address{
\parbox{\dimexpr\textwidth -\parindent}{ \hspace*{0.1in}  Department of Mathematics, Queens College -- City University of New York, 65-30 Kissena Blvd., Flushing, NY 11367 USA \\
\hspace*{0.09in}  Ph.D. Programs in Mathematics and Computer Science, Graduate Center - City University of New York, 365 Fifth Avenue, New York, NY 10016 USA}}
%How Russell typically writes his address in 8 lines of text is below.
%\address{
% \noindent\parbox{\linewidth}{
%{\sc
%\noindent
%Department of Mathematics \hfill \\
%\hspace*{.1in}  Queens College -- C.U.N.Y. \hfill \\
%\hspace*{.2in}  65-30 Kissena Blvd. \hfill \\
%\hspace*{.3in}  Flushing, New York  11367 U.S.A. \hfill \\
%Ph.D. Programs in Mathematics \& Computer Science \hfill \\
%\hspace*{.1in}  C.U.N.Y.\ Graduate Center\hfill \\
%\hspace*{.2in}  365 Fifth Avenue \hfill \\
%\hspace*{.3in}  New York, New York  10016 U.S.A. \hfill}\\
%}}
\email{Russell.Miller@qc.cuny.edu}
%\urladdr{\url{https://qcpages.qc.cuny.edu/~rmiller/}}

\author{Caleb Springer}
\address{
\parbox{\dimexpr\textwidth -\parindent}{ \hspace*{0.1in}Department of Mathematics, University College London, London WC1H 0AY, UK\\ 
\hspace*{0.1in}The Heilbronn Institute for Mathematical Research, Bristol, UK}}
\email{c.springer@ucl.ac.uk}
%\urladdr{\url{https://calebspringer.github.io/}}

\author{Linda Westrick}
\address{
\parbox{\dimexpr\textwidth -\parindent}{ \hspace*{0.1in}Department of Mathematics, The Pennsylvania State University, University Park, PA 16802, USA}}
\email{westrick@psu.edu}
%\urladdr{\url{http://www.personal.psu.edu/lzw299/}}

\begin{abstract}

For any subset $Z \subseteq \Q$, consider the set $S_Z$ of subfields $L\subseteq \Qbar$ which contain a co-infinite subset $C \subseteq L$ that is universally definable in~$L$ such that $C \cap \Q=Z$.
Placing a natural topology on the set $\Subfields$ of subfields of $\Qbar$, we show that if $Z$ is not thin in~$\Q$, then $S_Z$ is meager in $\Subfields$.
Here, \emph{thin} and \emph{meager} both mean ``small", in terms of arithmetic geometry and topology, respectively.
For example, this implies that only a meager set of fields $L$ have the property that the ring of
algebraic integers $\OO_L$ is universally definable in~$L$.
The main tools are Hilbert's Irreducibility Theorem and a new
normal form theorem for existential definitions.  The normal form theorem,
which may be of independent interest, says roughly that every $\exists$-definable subset
of an algebraic extension of $\mathbb Q$ is a finite union of single points and projections
of hypersurfaces defined by absolutely irreducible polynomials.
\end{abstract}
\thanks{MSC codes:  03C57 (primary); 12L05, 11U05, 03C40, 03D45 (secondary).  Key words:
algebraic fields, algebraic integers, definability, Hilbert Irreducibility Theorem, Hilbert's Tenth Problem}

\maketitle

\section{Introduction}
\label{sec:intro}

Let $\Subfields$ denote the set of subfields of $\Qbar$.  Given a field $L \in \Subfields$
and a set $C \subseteq L$, it is a question of general interest whether $C$ is 
first-order definable 
in $L$ using the language of rings.  If so, one also wants to know 
how simple a defining formula can be.  For example, 
results of Koenigsmann~\cite{K16}, extended by Park~\cite{P13}, have shown that in every number field $K$,
the ring $\mathcal O_K$ of algebraic integers is defined by a universal formula.
Here we show that the usual situation is the opposite,
not only for rings of integers but for any subset $A \subseteq \Qbar$
satisfying a rather general condition on $A \cap \mathbb Q$.
Just as $\OO_L = \OO_{\Qbar}\cap L$, we write
$A_L = A\cap L$.
Placing a natural topology on $\Subfields$, we will show that in most cases
there is a comeager set of fields $L \in \Subfields$ such that
$A_L$ cannot be defined in $L$ by any universal formula.

\begin{thm}
\label{thm:intro-fullygeneral}
If $A\subseteq \Qbar$ is a subset for which $A_{\Q}$ is coinfinite and not thin (as a subset of the Hilbertian field $\Q$), then the following class is meager in $\Subfields$:
$$ \U_A=\set{L\in\Subfields}{A_L\text{~is universally definable in~}L}.$$
\end{thm}
 Indeed, a stronger statement holds, and depends only on the subset of $\Q$ in question.

%\begin{thm}[Theorem \ref{thm:fullygeneral2}]
%\label{thm:intro-fullygeneral}
%If any set $Z\subset\Q$ is not thin, then the following is meager in $\Subfields$:
%$$ S_Z=\set{L\in\Subfields}{\text{some coinfinite $C\subseteq L$, universally definable in $L$, has~}C\cap\Q=Z}.$$
%\end{thm}

%Kirsten's version
\begin{thm}[Theorem \ref{thm:fullygeneral2}]
\label{thm:intro-fullygeneral}
If $Z\subset\Q$ is not thin, then the following is meager in $\Subfields$:
$$	S_Z  = \bigcup_{A\subseteq \Qbar \colon A\cap \Q=Z} \{L \in \Subfields : A_L \text{ is coinfinite and universally definable in }L\} $$
\end{thm}
%\begin{thm}
%\label{thm:intro-fullygeneral}
%If $A\subseteq \Qbar$ is a subset for which $A_{\Q}$ is not thin (as a subset of the Hilbertian field $\Q$), then the following class is meager in $\Subfields$:
%$$ \U_A=\set{L\in\Subfields}{A_L\subseteq L \text{~is coinfinite and universally definable in~}L}.$$
%\end{thm}
%Indeed, a stronger statement holds, and depends only on the subset of $\Q$ in question.
%\begin{thm}[Theorem \ref{thm:fullygeneral2}]
%\label{thm:intro-fullygeneral}
%If $Y\subset\Q$ is not thin, then the following is meager in $\Subfields$:
%\begin{align*}
%	S_Y  &= \bigcup_{A\subseteq \Qbar  \colon A\cap \Q=Y}\U_A\\
%		&= \set{L\in\Subfields}{\text{some coinfinite $C\subseteq L$, universally definable in $L$, has~}C\cap\Q=Y}.
%\end{align*} 
%\end{thm}

The second theorem implies the first by setting $Z=A_{\Q}$.
Thus the irrational portion of $A$ is irrelevant:  in all subfields $L$
outside the meager class $S_{A_\Q}$, neither $A_L$ nor any other set
that intersects $\Q$ in $A_\Q$ can be universally defined.
Clearly this is much stronger than the first statement.

Dually (with $B=\Qbar\setminus A$), if $B_{\Q}$ is infinite and not co-thin in $\Q$,
then the class
$$\mathcal E_B= \set{L\in\Subfields}{B_L\text{~is existentially definable in~}L}$$
equals $\U_A$, hence is meager.  The second statement can also be applied
in a dual form to existentially definable sets.

%Dually, if $\Q \setminus A = Z$, then $S$ contains every field $L$  in which $A \cap L$ is $\exists$-definable.

The notion of thinness which appears in the theorem is due to Serre.
Intuitively, a set is thin if it is ``small" in the sense of arithmetic geometry; see Section \ref{subsec:thin}.
Initially we did not expect definability of a set to be intertwined
with any notion of its size apart from finiteness, but this condition arose naturally
in our investigations.

The topology on $\Subfields$ is defined by considering it as a subset of the power set $2^{\Qbar}$, 
from which it inherits the product topology.
In this topology, every nonempty open set is non-meager.  
The topology also coincides (via the Galois 
 correspondence) with the Vietoris topology on the space of closed 
 subgroups of $\operatorname{Gal}(\Qbar/\Q)$.
 We thank Florian Pop for pointing out this connection to us, and for
 alerting us that the same topology appears
 in \cite{Pop1999}, where it is called the strict topology.
 The topology has also been used by other authors: for examples, see
 \cite{HaranJarden1988, Ershov1995, GartsideFisher2009, GartsideSmith2010}.

The theorem also remains true
when replacing $\Subfields$ with the quotient space
$\Subfields/\!\cong$ considered in \cite{M19}, which only considers fields up to isomorphism; see Corollary \ref{cor:up_to_isom}.

Using the fact that neither $\mathbb Z$ nor $\mathbb Q \setminus \mathbb Z$ is 
thin in $\mathbb Q$, we obtain the following corollary.

\begin{thm}[Theorem \ref{thm:meager_2}]
\label{thm:intro-main}
The set of algebraic extensions $K$ of $\QQ$ for which $\OO_K$ is existentially or universally definable is a meager subset of $\Subfields$.
 \end{thm}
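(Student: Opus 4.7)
Since $\Subfields$ has a countable basis of clopen sets of the form $U = [F;S] := \{K : F \subseteq K \text{ and } K \cap S = \emptyset\}$, ranging over finite extensions $F/\QQ$ and finite sets $S \subseteq \Qbar \setminus F$, a meager set is precisely a countable union of nowhere dense sets. I would enumerate the existential formulas $\phi_0, \phi_1, \dots$ and (separately) the universal formulas in the language of rings, and show that for each $n$ the set
\[
A_n \;:=\; \{\,K \in \Subfields : \phi_n \text{ defines } \OO_K \text{ in } K\,\}
\]
is nowhere dense in $\Subfields$. Taking the union over the countably many $n$ and the two quantifier types then produces the desired meager subset.

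Fix an existential $\phi = \phi_n$ and a basic open set $U = [F;S]$. The core tool is the strong minimality of $\Qbar$: the set $\phi(\Qbar) := \{\alpha \in \Qbar : \Qbar \models \phi(\alpha)\}$ is either finite or cofinite, and in particular cannot equal $\overline{\ZZ}$ (recovering the known undefinability of $\overline{\ZZ}$ in $\Qbar$). In the \emph{finite case}, existential truth descends from $\Qbar$ to every $K \subseteq \Qbar$, so $\{\alpha \in K : K \models \phi(\alpha)\} \subseteq \phi(\Qbar) \cap K$ is finite, while $\OO_K \supseteq \ZZ$ is infinite, forcing $A_n = \emptyset$. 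In the \emph{cofinite case}, for all but finitely many primes $p$ we have $\Qbar \models \phi(1/p)$, witnessed by a tuple $\vec{\beta}_p$ satisfying the quantifier-free matrix of $\phi$. If such a $p$ and $\vec{\beta}$ can be chosen so that $F(\vec{\beta}) \cap S = \emptyset$, then $V := [F(\vec{\beta});S]$ is a nonempty open subset of $U$, and every $K \in V$ contains $1/p \in K \setminus \OO_K$ together with a witness for $\phi(1/p)$; hence $V \cap A_n = \emptyset$, proving $A_n$ is nowhere dense in $U$.

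The universal case is dual: apply the same dichotomy to $\{x : \Qbar \models \neg \phi(x)\}$. In the finite subcase $A_n = \emptyset$; in the cofinite subcase there exist infinitely many $\alpha \in \overline{\ZZ}$ with $\Qbar \models \neg \phi(\alpha)$, witnessed in $\Qbar$ by a tuple $\vec{\beta}$, and one arranges a nonempty open $V \subseteq U$ in which every $K \supseteq F(\alpha, \vec{\beta})$ contains $\alpha \in \OO_K$ but has $K \not\models \phi(\alpha)$ (since the defeating witness is present).

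The principal technical obstacle in both cofinite cases is the \emph{witness-avoidance step}: producing a tuple $\vec{\beta}$ such that $F(\alpha, \vec{\beta}) \cap S = \emptyset$. My plan is a Galois-shifting argument. With $\alpha \in F$, any $\sigma \in \Gal{\Qbar}{F}$ sends a valid witness to another valid witness, and $\sigma(F(\vec{\beta})) \cap S = \emptyset$ is equivalent to $\sigma^{-1}(s) \notin F(\vec{\beta})$ for every $s \in S$. For each individual $s \in S$, the bad $\sigma$'s form a proper union of cosets of $\Gal{\Qbar}{F(s)}$ in $\Gal{\Qbar}{F}$, so a good $\sigma$ exists unless $F(\vec{\beta})$ happens to contain the entire $F$-Galois closure of $F(s)$. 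To defeat that degenerate subcase I would exploit the freedom to vary $\alpha$ (infinitely many primes $p$ in the existential case; infinitely many algebraic integers in the universal case), each giving a different constructible set of valid witnesses; making this combinatorial trade-off work simultaneously for every $s \in S$ is the most delicate part of the argument.
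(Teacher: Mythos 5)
Your outer structure (enumerate formulas, show each ``defines $\OO_K$'' set is nowhere dense inside every basic open $U_{\vec a,\vec b}$ by adjoining a witness for $\phi$ at a rational non-integer, dually for universal formulas) matches the paper's, and your finite case via upward persistence of existential formulas is fine. But the step you flag as ``the most delicate part'' --- witness-avoidance --- is a genuine gap, and it is not a degenerate corner case that Galois-shifting plus varying $\alpha$ can dispose of: it is the central difficulty of the theorem. Consider $\phi(X)=\exists Y\,[\,Y^2=2\,]$ (or any formula whose matrix forces a coordinate of every witness to generate a fixed extension), with $S=\{\sqrt{2}\}$ and $F=\QQ$. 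Here $\phi(\Qbar)$ is cofinite, yet for every rational $x$ and every witness tuple $\vec\beta$ the field $F(\vec\beta)$ contains $\sqrt 2\in S$, and this is preserved by every $\sigma\in\Gal{\Qbar}{F}$; varying $\alpha$ over infinitely many primes changes nothing, because the obstruction is uniform in $\alpha$. In this toy example the set $A_n\cap U$ happens to be empty for a different reason (inside $U$ the formula defines $\emptyset$), but your argument does not see that, and for a general existential formula --- whose matrix may define a variety with several irreducible components, some forcing forbidden generators and some not, with different components ``active'' in different fields of $U$ --- you have no mechanism for locating a witness of the relevant component that avoids $S$. Strong minimality of $\Qbar$ only controls $\phi(\Qbar)$; it says nothing about which witnesses survive the constraint $K\cap S=\emptyset$, and that is where the theorem lives.

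The paper closes exactly this gap with two pieces of machinery you have no substitute for. First, a well-ordering (``rank'') of existential formulas yields a normal form (Theorem \ref{thm:normal-form}): a minimal defining formula is a disjunction of formulas $X=z_0$ and of hypersurface formulas $\exists\Yvec\,[f(X,\Yvec)=0\neq g(X,\Yvec)]$ with $f$ \emph{absolutely irreducible}; this is precisely what rules out matrices like $Y^2-2$, whose fibers are reducible over $\Qbar$ and force fixed extensions. Second, for such $f$, Hilbert's Irreducibility Theorem together with Serre's thin-set results (neither $\ZZ$ nor $\QQ\setminus\ZZ$ is thin, and non-thinness passes to products and to finite extensions) produces a rational specialization $x\in\QQ\setminus\ZZ$ (resp.\ $x\in\ZZ$) for which $f(x,y_1,\dots,y_{e-1},Y)$ remains irreducible over the number field $K=F(\vec b)$ generated by the forbidden elements; then Lemma \ref{lemma:irreducibles} gives $F(y)\cap K=F$ for a root $y$, so adjoining the witness introduces no element of $S$. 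Your coset-counting over $\Gal{\Qbar}{F}$ cannot replace this, since it only permutes witnesses within one fiber and fails whenever the fiber's field contains the forbidden element. A secondary point: the theorem concerns definability with parameters from $K$, so you must enumerate formula--parameter pairs over $\Qbar$ (as the paper does via the sets $S_\beta(Z)\subseteq U_{\vec a}$), not just parameter-free formulas; also, existential truth ascends from $K$ to $\Qbar$, not ``descends,'' though the containment you use is the correct one.
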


After seeing one of the authors speak on these results, Philip Dittmann
and Arno Fehm extended Theorem \ref{thm:intro-main} in a different way,
in \cite{DittmannFehm2021},
improving ``existentially or universally definable'' to ``definable'' by explicitly  
using the fact that $\OO_K$ forms a ring.  Their proof
uses techniques from model theory, entirely different from those 
employed here.

 \subsection{Outline of the paper} To prove Theorem \ref{thm:intro-fullygeneral}, 
 we study the existential definability of
 sets $Y\subseteq \QQ$ whose complement is not thin, in the
 sense of Serre.  These are the complements of the sets $Z$ described
 above.  The necessary background of algebraic number theory,
 arithmetic geometry and thin sets is recalled in Section
 \ref{sec:ANT}.  In order to prove the main theorem, we introduce a
 new notion of rank in Section \ref{sec:rank} that applies to
 existential formulas.  This notion generalizes the multidegree of
 a polynomial in a way that we found to be both natural and quite useful,
 providing a
 pre-well-ordering of existential formulas.  Thus, if $Y$ is existentially
 definable within $\QQ$ over some field $L\subseteq \Qbar$, then there is
 a formula of least rank which does the job.  By studying such
 minimal-rank formulas in Section \ref{sec:hyp}, we obtain the following
 normal form for existential definitions, which may be of independent 
 interest.
  
  \begin{thm}[(Theorem \ref{thm:normal-form})]
 For any field $L \subseteq \Qbar$, if $A \subseteq L$ is existentially definable in $L$,
then $A$ is definable in $L$ by a formula of the form 
$$\alpha(X) = \vee_{i =1}^r \beta_i(X),$$ where each $\beta_i(X)$ has one of the following forms:
 \begin{enumerate}[(i)]
	 \item The quantifier-free formula $X=z_0$ for a fixed $z_0\in L$.
	 \item A formula of the form 
 $$\exists Y_1\dots \exists Y_e~[ f(X, Y_1, \dots, Y_e) = 0 \neq g(X, Y_1, \dots, Y_e)]$$ 
 for polynomials $f,g \in L[X, Y_1, \dots, Y_e]$,
	 where $f$ is absolutely irreducible and does not divide $g$.
\end{enumerate}
  \end{thm}
 
 Finally, we introduce the topological spaces
 of $\Subfields$ and $\Subfields/\!\cong$ in Section
 \ref{sec:main_thm}, and use the normal form to deduce the main result
 via Hilbert's Irreducibility Theorem.  In fact, the proof also leads
 to an algorithm which, given a basic open subset
 $U\subseteq \Subfields$, produces a computable field $L\in U$ in
 which the ring of integers $\OO_L$ is neither existentially or
 universally definable; see Theorem \ref{thm:computable_fields}.

 \subsection{Previous work on definability of rings of integers}
 Much of the previous work on the definability of subsets $A \subseteq K \in \Subfields$
 has focused on the case where $A = \OO_K$.  We conclude the 
 introduction with a overview of the literature on this case.
 
The existential definability of $\OO_K$ in $K$
 is an ingredient that would assist a standard reduction argument
for proving undecidability results for generalizations of Hilbert's
Tenth Problem.  In its original form, this problem asked for an algorithm that
decides, given a polynomial equation $f(x_1,\dots,x_n)=~0$ with
coefficients in the ring $\Z$ of integers, whether there is a solution
with $x_1,\dots,x_n \in \Z$.  Matiyasevich \cite{Mat70}, building on
earlier work by Davis, Putnam, and Robinson \cite{DPR61}, proved that
no such algorithm exists, i.e., Hilbert's Tenth Problem is
undecidable.  Since then, analogues of this problem have been studied
by asking the same question for polynomial equations with coefficients
and solutions in other recursive commutative rings.  One of the most
important unsolved questions in this area is Hilbert's Tenth Problem
over the field of rational numbers $\Q$, and more generally over
number fields.  If $\mathbb{Z}$ is existentially definable in
$\mathbb{Q}$, then a reduction argument shows that Hilbert's Tenth
Problem for $\mathbb{Q}$ must be undecidable.

However, if Mazur's Conjecture holds, then $\mathbb{Z}$ is 
not existentially definable in $\mathbb{Q}$.
%, and in general the ring
%of integers $\OO_K$ for a number field $K$ would not be existentially
%definable in $K$.
Proving this unconditionally currently appears to
be out of reach.  In fact, it seems generally very difficult
to prove undefinability results for individual fields. One example of
success is the field of all totally real algebraic numbers
$\QQ^{\tr}$. Fried, Haran and V\"olklein showed that its first-order
theory is decidable \cite{FHV94}, while J.\ Robinson showed that the
first-order theory of the ring of all totally real integers
$\ZZ^{\tr}$ is undecidable \cite{R62}.  This difference in
decidability implies that $\ZZ^{\tr}$ cannot be first-order definable
in the field $\QQ^{\tr}$.  Another example is the ring $\overline\ZZ$
of all algebraic integers inside $\Qbar$, which is undefinable by the
strong minimality of $\Qbar$.  In both examples, the facts used for
proving undefinability are not remotely close to necessary conditions
for undefinability. Instead, they simply reflect the available
pathways for unconditionally proving undefinability in a limited
number of cases.

While it is still an open question whether $\mathbb{Z}$ is
existentially definable in $\mathbb{Q}$, it is possible to give a
first-order definition of $\ZZ$ in $\QQ$, i.e.\ a definition that uses both existential
and universal quantifiers. 
This was first done by J.\ Robinson~\cite{Rob49}, who generalized this result to define
the ring of integers $\mathcal{O}_K$ inside any number field $K$~\cite{Rob59}. Later,
Rumely~\cite{Rum80} was able to make the definition of the ring of
integers uniform across  number fields. Robinson's definition was
improved by Poonen \cite{Poon09} who gave a $\forall \exists$-definition that in every
number field $K$ defines its ring of integers.  
Following this, Koenigsmann \cite{K16} proved that it is possible to give a universal
definition of $\mathbb{Z}$ in $\mathbb{Q}$, i.e.\ a definition that
only involves universal $(\forall)$ quantifiers, and Park extended his result
to show that $\OO_K$ is universally definable in $K$ for every number field
$K$~\cite{P13}.  This raises the question of whether we can expect
universal and first-order definability to continue to hold for many infinite algebraic extensions of
$\QQ$.

Currently, first-order definability results are only known for certain
classes of infinite extensions of the rationals. These are usually
proved in order to establish the first-order undecidability of certain
infinite extensions via reductions.  For example, Videla proved the
definability of the ring of integers over certain infinite algebraic
\hbox{pro-$p$} extensions of $\mathbb{Q}$ \cite{Vid00}, while Fukuzaki
was able to define the ring of integers in infinite extensions in
which every finite subextension has odd degree and that satisfy
certain ramification conditions~\cite{Fuk2012}.  These results were
further generalized by Shlapentokh in~\cite{Shl2018}, to which we
refer readers for more extensive background on known results for the
first-order definability and decidability of infinite algebraic
extensions of $\QQ$.  In Shlapentokh's framework, all known examples
of algebraic extensions of $\QQ$ with first-order definable rings of
integers can be viewed as relatively small extensions which are
somehow ``close" to $\QQ$.  On the other hand, although first-order
definability seems less likely for extensions which are similarly
``far from" $\QQ$, very few negative examples are known, as mentioned
above.

\subsection*{Acknowledgements}
This project began during a workshop at the American Institute of
Mathematics in May 2019.  It is based upon work supported by the
National Science Foundation under Grant \# DMS-1928930 while the
authors participated in a program hosted by the Mathematical Sciences
Research Institute in Berkeley, California, during the Fall 2020
semester. The authors wish to acknowledge useful conversations with
Tom Tucker.  Eisentr\"ager was partially supported by National Science
Foundation awards CNS-1617802, CNS-2001470, and a Vannevar Bush Faculty Fellowship
from the US Department of Defense.  Miller was partially supported by
Grant \# 581896 from the Simons Foundation and by the City University
of New York PSC-CUNY Research Award Program.  Springer was partially
supported by National Science Foundation award CNS-1617802.  Westrick
was partially supported by the Cada R. and Susan Wynn Grove Early
Career Professorship in Mathematics.

\section{Background from number theory and algebraic geometry}
\label{sec:ANT}

In this section, we will recall some of the basic facts that we will
require for fields, thin sets, and affine varieties.  Readers can find
additional background in the books of Fried and Jarden \cite{FJ08}, Lang \cite{lang}, Serre
\cite{tigt} and Liu \cite{Liu}.

\subsection{Field extensions and the irreducibility of polynomials}

In the material that follows, we will be presented with the following question: Given number fields $F \subseteq K$, which field extensions of $F$ contain elements of the complement $K\setminus F$? This question is intimately related to the  irreducibility of polynomials.  First, we recall a basic result on the irreducibility of multivariable polynomials.

%The following simple results provide us the tools we require.

\begin{lemma}
\label{lemma:irreducibles}
If $K/F$ is an extension of fields within a larger field $L$, and $z\in L$ is algebraic over $F$
with $F(z)\cap K\neq F$, then the minimal polynomial $h(Z)$ of $f$ over $F$ must be reducible over $K$.
\end{lemma}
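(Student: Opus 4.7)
The plan is to exploit the multiplicativity of degrees in a tower of fields, together with the defining property of the minimal polynomial. The hypothesis $F(z)\cap K\neq F$ is there precisely to produce an intermediate field between $F$ and $F(z)$ which is also contained in $K$, and then the reducibility of $h(Z)$ over $K$ will follow for free.

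First, I would pick some element $w\in (F(z)\cap K)\setminus F$, which exists by hypothesis. Since $w\in F(z)$ we have $F(w)\subseteq F(z)$, and since $w\in K$ and $F\subseteq K$ we also have $F(w)\subseteq K$. Because $w\notin F$, the degree $[F(w):F]$ is strictly greater than $1$. Now let $g(Z)\in F(w)[Z]$ be the minimal polynomial of $z$ over the enlarged field $F(w)$. Since $z$ is a root of $h(Z)$ and $h$ has coefficients in $F\subseteq F(w)$, the polynomial $g$ must divide $h$ in $F(w)[Z]$.

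Next, I would apply the tower law:
\[
[F(z):F(w)] \;=\; \frac{[F(z):F]}{[F(w):F]} \;=\; \frac{\deg h}{[F(w):F]} \;<\; \deg h.
\]
Thus $\deg g = [F(z):F(w)] < \deg h$, while on the other hand $\deg g \geq 1$. Combined with the divisibility $g \mid h$ in $F(w)[Z]$, this produces a factorization $h(Z) = g(Z)\cdot q(Z)$ inside $F(w)[Z] \subseteq K[Z]$ in which both factors have degree strictly between $0$ and $\deg h$. Hence $h(Z)$ is reducible over $K$, which is exactly what we wanted.

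There is no serious obstacle here; the argument is a clean application of the tower law and the universal property of minimal polynomials. The only subtlety to flag is that the statement as written refers to the minimal polynomial ``of $f$'' where it clearly means ``of $z$'', so I would read it with that correction in mind.
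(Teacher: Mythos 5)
Your proof is correct and follows essentially the same route as the paper: both arguments use the tower law over an intermediate field inside $F(z)\cap K$ (you use $F(w)$ for a chosen $w\in (F(z)\cap K)\setminus F$, the paper uses $F(z)\cap K$ itself) to see that the minimal polynomial of $z$ over that field is a proper divisor of $h$, hence $h$ factors over $K$. You are also right that ``minimal polynomial of $f$'' in the statement should read ``of $z$''.
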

\begin{proof}
By hypothesis $1<[F(z)\cap K:F]$, so
$$ [F(z):F(z)\cap K] < [F(z):F(z)\cap K]\cdot [F(z)\cap K:F] = [F(z):F].$$
From this it follows that $h(Z)$ must factor over $F(z)\cap K$, so it certainly also
factors over the larger field $K$.
\end{proof}

The next proposition forms a kind of converse to Lemma \ref{lemma:irreducibles}
when $K/F$ is a finite Galois extension.  Given an algebraic function field $E = \Frac(F[Y_0,Y_1,\ldots,Y_m]/(f))$ where $f\in F[Y_0,Y_1,\ldots,Y_m]$ is an irreducible polynomial, the \emph{constant field} of $E$ is the set of elements which are algebraic over $F$.

\begin{prop}
\label{prop:missinglemma}
Let $F$ be a number field, and $K$ a finite Galois extension of $F$.
If $m\geq 0$ and $f\in F[Y_0,Y_1,\ldots,Y_m]$ is an irreducible polynomial
that becomes reducible in $K[Y_0,Y_1,\ldots,Y_m]$, then the
constant field of $E = \Frac(F[Y_0,Y_1,\ldots,Y_m]/(f))$ is larger than $F$.
 In particular, there is an element $z\in E\setminus F$ such that  there is an $F$-linear field embedding of $F(z)$ into $K$ with the image of $z$ lying in $K\setminus F$.
\end{prop}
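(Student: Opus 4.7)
Let $L$ denote the constant field of $E$ (the algebraic closure of $F$ in $E$). My plan is to show that $L \otimes_F K$ is not a field. Given this, since $K/F$ is Galois, $L$ and $K$ must share a common $F$-subfield strictly larger than $F$, and any nontrivial element $z$ of this subfield witnesses both parts of the proposition: via a chosen embedding $L \hookrightarrow \Fbar$, we have $z \in L \subseteq E$ (so $z \in E \setminus F$) and $F(z) \subseteq K$ (so the inclusion is the desired $F$-linear embedding, sending $z$ to $z \in K \setminus F$).

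To prove $L \otimes_F K$ is not a field, I argue by contradiction. Suppose it were. Since $L$ is algebraically closed in $E$ and we are in characteristic zero, $E/L$ is a regular extension, so $E \otimes_L M$ is a field for every algebraic field extension $M/L$. Taking $M := L \otimes_F K$, associativity of the tensor product gives
\[
E \otimes_F K = E \otimes_L (L \otimes_F K),
\]
which would also be a field. I will contradict this using the factorization $f = f_1 \cdots f_r$ in $K[Y_0, \ldots, Y_m]$ with $r \geq 2$. Writing $y_i := Y_i \bmod f$ in $A = F[Y_0, \ldots, Y_m]/(f)$, we have $\prod_{i=1}^r f_i(y) = f(y) = 0$ in $E \otimes_F K$. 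If each factor $f_i(y)$ is nonzero, these are zero divisors, contradicting the fact that $E \otimes_F K$ is a domain.

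The heart of the argument is therefore verifying $f_i(y) \neq 0$ in $E \otimes_F K$. Fix an $F$-basis $\{\epsilon_k\}$ of $K$ and decompose each $f_i$ as $f_i = \sum_k \epsilon_k g_{k,i}$ with $g_{k,i} \in F[Y_0, \ldots, Y_m]$; this gives
\[
f_i(y) = \sum_k g_{k,i}(y) \otimes \epsilon_k \in E \otimes_F K.
\]
Since $\{1 \otimes \epsilon_k\}$ is $E$-linearly independent in $E \otimes_F K$ (it is an $E$-basis, as $E \otimes_F K$ has $E$-dimension $[K:F]$), the vanishing $f_i(y) = 0$ would force each $g_{k,i}(y) = 0$ in $E$, equivalently $f \mid g_{k,i}$ in $F[Y_0, \ldots, Y_m]$. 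Recombining then yields $f \mid f_i$ in $K[Y_0, \ldots, Y_m]$, contradicting $\deg{f_i} < \deg{f}$ (which holds whenever $r \geq 2$).

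The step I expect to be most delicate is the regularity conclusion ``$E \otimes_L M$ is a field for algebraic $M/L$'': this uses both characteristic zero (for separability) and $L$ being the relative algebraic closure of $F$ in $E$ (so that the minimal polynomial over $L$ of any algebraic element of $\bar L$ remains irreducible over $E$). The closing passage from ``$L \otimes_F K$ is not a field'' to ``$L$ and $K$ share a subfield larger than $F$'' uses that $K/F$ is Galois to equate linear disjointness over $F$ with trivial intersection in any common extension, giving the required element $z$.
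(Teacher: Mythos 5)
Your proposal is correct, but it takes a genuinely different route from the paper's. The paper passes to the generic fibre: it sets $L_0=F(Y_0,\dots,Y_{m-1})$, writes $E=L_0(\theta)$ with $\theta$ a root of $f$, and, assuming $E\cap K=F$, uses the Galois degree identity $[EK:E]=[K:F]=[L_0K:L_0]$ together with the diamond over $L_0$ to conclude that $f$ stays irreducible as a polynomial in $Y_m$ over $K(Y_0,\dots,Y_{m-1})$; it then needs a separate content argument (a lemma that coprimality of the coefficients survives separable base change, proved via reducedness of $F[Y_0,\dots,Y_m]/(f_0f_1)$) to upgrade this to irreducibility in $K[Y_0,\dots,Y_m]$ and contradict the hypothesis. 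You instead work with $E\otimes_F K$: the factorization $f=f_1\cdots f_r$ over $K$ produces zero divisors (your basis-and-degree argument that each $f_i(y)\neq 0$ is sound, since $f\mid f_i$ would contradict $0<\deg f_i<\deg f$), whereas if the constant field $L$ satisfied ``$L\otimes_F K$ is a field,'' regularity of $E/L$ (characteristic zero, $L$ relatively algebraically closed in $E$) would make $E\otimes_F K\cong E\otimes_L(L\otimes_F K)$ a domain; Galois-ness of $K/F$ then converts the failure of linear disjointness into $L\cap K\supsetneq F$ after choosing an embedding into a common algebraic closure, producing the element $z$ and both conclusions at once. Your route buys a cleaner treatment of the multivariable issue---no primitivity/content lemma is needed, because the zero-divisor computation works with total degrees directly---and it delivers the constant-field statement immediately; the cost is leaning on the standard but heavier facts about regular extensions and linear disjointness (Lang, Ch.~VIII), where the paper gets by with finite Galois theory plus one commutative-algebra lemma. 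If you write this up, record two small points you left implicit: $L\supsetneq F$ follows automatically (if $L=F$ then $L\otimes_F K=K$ is a field), and a domain that is algebraic over a field is a field, so the ``domain'' conclusion from regularity is all you need for the contradiction.
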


 \begin{proof} 
 Assume without loss of generality that $Y_m$ appears nontrivially in $f$, and write $L = F(Y_0, Y_1,\dots, Y_{m-1})$.   
 We will view $E = L(\theta)$ for an element $\theta$ in the algebraic closure $\overline L$ with minimal polynomial $f$.  Similarly consider $K$ to be an extension of $F$ inside $\overline L$.

Suppose that $E$ contains no elements of $K\setminus F$.  Then $E\cap K = L \cap K = F$, and a basic theorem of Galois theory \cite[Theorem 1.12]{lang} implies the following because $K$ is a Galois extension of $F$:
$$
	[EK : E] = [K : E\cap K] = [K : F] = [K : L\cap K] = [LK : L].
$$

Using the diamond written below, we deduce that $[E : L]  = [EK : LK]$.  
Importantly, these field extension degrees are also the degrees of the minimal polynomial of $\theta$ over $L$ and $LK$, respectively.  
   \begin{align*} 
   \xymatrix{& EK  \ar@{-}[dr] \ar@{-}[dl] & \\
           LK \ar@{-}[dr]  & & E \ar@{-}[dl]\\
           & L&}
          \end{align*}

This shows that $f$ remains irreducible over the field ${L = K(Y_0,Y_1,\dots, Y_{m-1})}$ as a polynomial in $Y_m$.   
We claim that $f$ is actually irreducible as an element of the ring $K[Y_0, Y_1,\dots, Y_{m}]$, which contradicts the hypothesis. To prove this, it only remains to show that the coefficients of $f$ lying in $K[Y_0,\dots, Y_{m-1}]$ have no common factor; see \cite[IV.2.3]{lang}. 
 Clearly, as a polynomial in $Y_m$, the coefficients of $f$ lying in $F[Y_0, \dots, Y_m]$ have no common factor over $F$ because $f$ is irreducible over $F$. 
  In fact, this implies that the coefficients also have no common factor over any algebraic extension of $F$ by the following lemma, which completes the proof.
 \end{proof}
 
 \begin{lemma} 
 Let $F$ be a field and let $F'$ be a separable extension. If $f_0, f_1, \dots, f_k$  are a collection of polynomials in $F[Y_0, \dots, Y_m]$ with no common factor, then $f_0,\dots, f_k$ also have no common factor over the extension $F'$.
  \end{lemma}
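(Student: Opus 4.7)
The plan is to apply Galois descent to the action of a suitable Galois group on the unique factorization domain $F'[Y_0,\ldots,Y_m]$. Suppose for contradiction that some non-unit $g \in F'[Y_0,\ldots,Y_m]$ divides every $f_i$. Only finitely many coefficients of $g$ lie in $F'$, so these generate a finite subextension of $F'/F$, which is separable by hypothesis. Replacing $F'$ by the Galois closure of this finite subextension (still finite and separable over $F$), I may assume without loss of generality that $F'/F$ is a finite Galois extension with Galois group $G$.

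Passing to an irreducible factor of $g$ in $F'[Y_0,\ldots,Y_m]$, I may further assume $g$ is irreducible. Since each $f_i$ lies in $F[Y_0,\ldots,Y_m]$ and is fixed by $G$, the Galois conjugate $\sigma(g)$ is again irreducible and divides $f_i$ for every $\sigma \in G$. Let $g_1,\ldots,g_r$ be representatives of the distinct associate classes in the $G$-orbit of $g$. These are pairwise non-associate irreducibles in the UFD, hence pairwise coprime, so the product $P := g_1\cdots g_r$ also divides every $f_i$. Because $G$ permutes $\{g_1,\ldots,g_r\}$ up to scalars in $(F')^\times$, for each $\sigma \in G$ we have $\sigma(P) = u_\sigma P$ for some $u_\sigma \in (F')^\times$.

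The key consequence is that for any two nonzero coefficients $c_1, c_2$ of $P$, viewed as a polynomial in $Y_0,\ldots,Y_m$, we have $\sigma(c_1/c_2) = (u_\sigma c_1)/(u_\sigma c_2) = c_1/c_2$ for every $\sigma \in G$, so $c_1/c_2 \in F$ by Galois theory. Dividing $P$ by any one of its nonzero coefficients thus produces $Q \in F[Y_0,\ldots,Y_m]$ which inherits positive total degree from $P$. Since $Q \in F[Y_0,\ldots,Y_m]$ divides each $f_i$ in $F'[Y_0,\ldots,Y_m]$, a standard descent argument (writing the quotient $f_i/Q \in F'[Y_0,\ldots,Y_m]$ in an $F$-basis of $F'$ containing $1$ and matching coefficients) shows that $Q$ also divides $f_i$ in $F[Y_0,\ldots,Y_m]$. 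This exhibits a non-unit common factor of $f_0,\ldots,f_k$ in $F[Y_0,\ldots,Y_m]$, contradicting the hypothesis.

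The most delicate point is the initial Galois reduction: separability is precisely what guarantees that the coefficients of $g$ lie in a finite separable subextension whose Galois closure remains finite, so that the orbit-product $P$ is a finite product and the coefficient-ratio argument can be run. Everything else is a combination of routine UFD manipulation, the characterization of $F$ as the fixed field of $G$, and standard $F$-module descent.
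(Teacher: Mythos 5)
Your proof is correct, and it takes a genuinely different route from the paper. The paper first reduces to the case of two irreducible polynomials $f_0,f_1$ with no common factor, observes that this coprimality is equivalent to $F[Y_0,\dots,Y_m]/(f_0f_1)$ being reduced, and then quotes the fact (Liu, Prop.\ 3.2.7(b)) that reducedness is preserved under separable base extension; separability enters only through that cited scheme-theoretic stability result. You instead run a self-contained Galois-descent argument: pass to a finite Galois extension generated (over $F$) by the coefficients of a hypothetical common irreducible factor $g$, take the product $P$ of representatives of the associate classes in the $G$-orbit of $g$, note $\sigma(P)=u_\sigma P$, conclude that ratios of coefficients of $P$ lie in the fixed field $F$, rescale to get $Q\in F[Y_0,\dots,Y_m]$, and descend divisibility by matching coefficients in an $F$-basis of $F'$. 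The one point you gloss is the initial ``replace $F'$ by the Galois closure'' step: the Galois closure $\widetilde E$ of the subextension $E$ generated by the coefficients of $g$ need not be comparable with $F'$, so you should first descend the divisibility $g\mid f_i$ from $F'[Y_0,\dots,Y_m]$ to $E[Y_0,\dots,Y_m]$ (by exactly the coefficient-matching argument you invoke at the end) before extending to $\widetilde E$; this is routine and does not affect correctness. Comparing the two: the paper's proof is shorter but leans on an external algebro-geometric fact and needs the preliminary reduction to pairs of irreducibles, while yours is elementary (UFD factorization, finite Galois theory, linear-algebra descent), treats all $k+1$ polynomials simultaneously, and actually proves the sharper statement that any common divisor over $F'$ is, up to a scalar in $(F')^\times$, defined over $F$ and divides the $f_i$ already over $F$. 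Separability is used essentially in both, but in different guises: reducedness under separable base change in the paper, existence of a finite Galois closure and $F=(F')^G$ in yours.
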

   \begin{proof} 
By writing $f_0, \dots, f_k$ in terms of their irreducible factors, we can reduce without loss of generality to the case of two irreducible polynomials $f_0, f_1\in F[Y_0, \dots, Y_m]$.  Indeed, for every irreducible factor $p$ of $f_0$, there is a polynomial $f_j$ for $1\leq j\leq m$ which is not divisible by $p$, and it suffices to show that the irreducible factors of $f_j$ remain relatively prime to $p$ over the larger field $F'$.

Notice that irreducible polynomials $f_0$ and $f_1$ are relatively
prime over $F$ if and only if $f_0f_1$ generates a radical ideal in
$F[Y_0,\dots, Y_m]$, i.e.\ if and only if
$F[Y_0, \dots, Y_m]/(f_0f_1)$ is a reduced ring.  The latter condition
is stable under separable field extensions, i.e.\
$F'[Y_0, \dots, Y_m]/(f_0f_1)$ is also reduced; see \cite[Proposition
3.2.7.(b)]{Liu}. Therefore $f_0$ and $f_1$ have no common factors over
$F'$.
   \end{proof}

\subsection{Dimensions of rings and affine varieties}

\label{sec:dim}

We will require a usable notion of dimension, which can equivalently
be viewed as a geometric or algebraic phenomenon.  In particular,
there are related notions of the dimension of a commutative ring $A$,
and the dimension of the associated topological space $\Spec A$
consisting of all prime ideals of $A$ with the Zariski topology.  In
this section, we will review some basic facts of commutative algebra
and algebraic geometry, limiting the discussion to only what is
necessary for our purposes.

First, let us recall this topology and some basic notation.  Given a
commutative ring $A$, the set $\Spec A$ is endowed with the
\emph{Zariski topology} by defining the following as basic closed and
open sets, respectively.  For any ideal $I\subseteq A$, we define
$V(I)$ to be the subset of $\Spec A$ consisting of all prime ideals
that contain $I$, and $D(f) = \Spec A \setminus V(f)$.  Notice that
it is natural via the isomorphism theorems for rings to identify
$V(I)$ with $\Spec A/I$.  With this notation, the closed subsets of
$\Spec A$ in the Zariski topology are precisely the sets of the form
$V(I)$ where $I\subseteq A$ is an ideal, and sets of the form $D(f)$
for $f\in A$ form a base for the open subsets of $\Spec A$. In fact,
$\Spec A$ is an \emph{affine scheme}, meaning that it has even more
structure than just a topology, although we will not require this full
structure; see \cite[Chapter 2]{Liu} for more background.

In this paper, we consider the ring $A = F[Y_0, \dots, Y_m]$ and its
quotients, where $F$ is a subfield of $\overline \QQ$.  An
\emph{affine variety over F} is an object of the form
$V(I) = \Spec F[Y_0,\dots, Y_m]/I$ for some $m\geq 0$ and some ideal
$I\subseteq F[Y_0,Y_1,\dots, Y_m]$.  Furthermore, if the quotient
$F[Y_0,\dots, Y_m]/I$ is an integral domain, then the corresponding
affine variety is called \emph{integral}.  We will write
$V(I) = V(f_1, \dots, f_k)$ when the ideal
$I\subseteq F[X,Y_1,\dots, Y_m]$ is generated by $\{f_1,\dots, f_k\}$.
If there is ambiguity about the base field, then we will write $V_F$
instead of $V$ for clarity.

%Of course, we care about affine varieties because they encode the
%structure needed to solve systems of polynomial equations over a given
%field.
Given an affine variety $V = \Spec F[Y_0,\dots, Y_m]/I$, 
the \emph{rational points} of $V$ (over $F$) are the tuples
$(y_0,\dots, y_m)\in F^m$ such that $f(y_0, \dots, y_m) = 0$ for all
$f\in I$. The set of rational points can be identified with the set of
all $F$-algebra homomorphisms $\varphi: F[Y_0,\dots, Y_m]/I\to F$.  We
refer the reader to \cite[Section 2.3.2]{Liu} for more details.
As we are frequently working over non-algebraically closed
fields, it is possible for nontrivial affine varieties to have no
rational points, such as the affine variety
$\Spec\QQ[Y_0, \dots, Y_m]/(Y_0^2 + \dots + Y_m^2 +1)$ for any
$m \geq 0$.  We can view the varieties as geometric objects which help
us find and describe the rational points.

The {\emph{Krull dimension}} of a ring $A$, written $\dim A$, is the supremal
length $r$ of a chain of prime ideals
$\pp_0 \subsetneq \dots \subsetneq \pp_r$ in $A$.  Similarly, given a
topological space $X$, we define $\dim X$ to be the supremal length
$r$ of a chain of irreducible closed subsets
$Z_0\subsetneq \dots \subsetneq Z_r$ in $X$.  The following proposition
equates these two notions of dimension. Recall that the nilradical of
a commutative ring is the set of all nilpotent elements, or
equivalently the intersection of all prime ideals.

\begin{prop}[Proposition 2.5.8, \cite{Liu}]
Let $A$ be a (commutative) ring and let $N$ be the nilradical of $A$. Then $\dim{\Spec A} = \dim{A} = \dim{A/N}$.
 \end{prop}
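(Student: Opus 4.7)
The plan is to prove the two equalities separately, leveraging the standard correspondence between prime ideals of a ring and irreducible closed subsets of its spectrum. For the first equality $\dim \Spec A = \dim A$, I would establish the inclusion-reversing bijection
\[
\{\text{prime ideals of } A\} \longleftrightarrow \{\text{irreducible closed subsets of } \Spec A\}, \qquad \pp \longmapsto V(\pp),
\]
and then note that a chain of primes $\pp_0 \subsetneq \dots \subsetneq \pp_r$ in $A$ corresponds exactly to a chain of irreducible closed subsets $V(\pp_0) \supsetneq \dots \supsetneq V(\pp_r)$ in $\Spec A$, and vice versa. Taking suprema of lengths of such chains gives $\dim A = \dim \Spec A$.

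The two nontrivial checks for this bijection are that $V(\pp)$ really is irreducible when $\pp$ is prime, and that every irreducible closed subset is of this form. For the first, if $V(\pp) = V(I_1) \cup V(I_2) = V(I_1 \cap I_2)$, then $I_1 \cap I_2 \subseteq \pp$, so primality of $\pp$ gives $I_1 \subseteq \pp$ or $I_2 \subseteq \pp$, hence $V(\pp) \subseteq V(I_1)$ or $V(\pp) \subseteq V(I_2)$. For the second, any closed subset has the form $V(I)$ with $I$ radical; if $V(I)$ is irreducible and $fg \in I$, then writing $V(I) = (V(I) \cap V(f)) \cup (V(I) \cap V(g))$ forces (by irreducibility) $V(I) \subseteq V(f)$ or $V(I) \subseteq V(g)$, so $f \in \sqrt{I} = I$ or $g \in I$, showing $I$ is prime.

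For the second equality $\dim A = \dim A/N$, I would use the standard fact that the quotient map $\pi \colon A \to A/N$ induces an inclusion-preserving bijection between prime ideals of $A/N$ and prime ideals of $A$ that contain $N$. Since $N$ is the intersection of all prime ideals of $A$, every prime of $A$ automatically contains $N$, so this bijection actually runs between all primes of $A/N$ and all primes of $A$. Chains of primes therefore correspond exactly, giving $\dim A = \dim A/N$.

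I don't expect serious obstacles; the only subtlety is the verification that the spec/prime-ideal correspondence is a bijection onto the irreducible closed subsets, which is a textbook fact but worth spelling out. Everything else is bookkeeping with chains of prime ideals.
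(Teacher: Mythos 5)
Your proof is correct: the paper itself gives no argument for this statement, citing it directly as Proposition 2.5.8 of Liu, and your argument (the inclusion-reversing bijection between primes of $A$ and irreducible closed subsets of $\Spec A$, plus the fact that every prime contains the nilradical so primes of $A$ and of $A/N$ correspond) is precisely the standard textbook proof found in that reference. No gaps; the minor points you flag (irreducibility of $V(\pp)$ for $\pp$ prime, and that every irreducible closed set is of this form) are handled correctly.
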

 
 In our applications, we need to understand the dimension of subsets of affine varieties. Recall that if $X$ is any topological space and $Y$ is any subset of $X$ endowed with the subset topology, then $\dim Y\leq \dim X$ \cite[Proposition 2.5.5]{Liu}.  In the context of affine varieties and open subsets, this inequality is often an equality due to the fact that open subsets in the Zariski topology are ``large". This idea is formulated precisely in the following proposition.  Given a field extension $L/F$, we write $\trdeg_F L$ for the \emph{transcendence degree} of $L$ over $F$.   If $X = \Spec A$ is an integral affine variety, we call $\Frac(A)$ the \emph{function field} of $X$.
 
 \begin{prop}[Proposition 2.5.19, \cite{Liu}]
 \label{prop:open}
 If $X = \Spec A$ is an integral affine  variety over a field $F$, then 
 $$
 	\dim U 
		= \dim X 
		= \trdeg_F \Frac(A)
$$
 for each nonempty open subset  $U\subseteq X$.
  \end{prop}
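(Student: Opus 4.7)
The plan is to separate the statement into two equalities, $\dim X = \trdeg_F \Frac(A)$ and $\dim U = \dim X$, and treat them in that order. For the first, I would invoke Noether normalization: since $A = F[Y_0,\ldots,Y_m]/I$ is a finitely generated $F$-algebra and an integral domain, there exist algebraically independent elements $t_1,\ldots,t_d \in A$, with $d = \trdeg_F \Frac(A)$, such that $A$ is integral over the polynomial subring $B := F[t_1,\ldots,t_d]$. The Cohen--Seidenberg going-up theorem then transports chains of primes between $A$ and $B$, yielding $\dim A = \dim B$. It remains to compute $\dim B = d$, which follows from the classical chain $(0)\subsetneq (t_1)\subsetneq\cdots\subsetneq (t_1,\ldots,t_d)$ for the lower bound and an inductive Krull-dimension argument (or a second application of Noether normalization) for the upper bound.

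For the second equality, I would reduce to the case of a basic open $D(f) \subseteq X$. Any nonempty open $U$ contains some nonempty $D(f)$, necessarily with $f\in A \setminus \{0\}$, and the general inequality $\dim U \leq \dim X$ has already been cited. Thus it suffices to show $\dim D(f) \geq \dim X$. Identifying $D(f)$ with $\Spec A_f$, the localization $A_f$ is again a finitely generated $F$-algebra (generated by the generators of $A$ together with $1/f$), and $\Frac(A_f) = \Frac(A)$ because $A$ is a domain. Applying the first equality to $A_f$ then gives $\dim D(f) = \trdeg_F \Frac(A_f) = d = \dim X$, and sandwiching $\dim D(f) \leq \dim U \leq \dim X$ yields the desired equality for $U$ as well.

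The main obstacle is really packaging the commutative-algebra machinery cleanly rather than proving anything genuinely new: Noether normalization, going-up, and preservation of fraction fields under localization are all standard tools, but one has to verify that intermediate rings remain finitely generated $F$-algebras (so that the dimension formula applies to them) and that chains of primes track correctly across integral extensions. A cleaner alternative would be to cite the dimension formula $\dim A = \trdeg_F \Frac(A)$ for finitely generated integral domains over a field directly, after which only the open-subset step requires any work, and it reduces entirely to the observation that basic opens in an integral affine variety correspond to localizations whose fraction field is unchanged.
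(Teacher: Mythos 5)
Your proposal is correct: the paper does not prove this statement at all, but simply quotes it as Proposition 2.5.19 of Liu, and your argument (Noether normalization plus Cohen--Seidenberg to get $\dim A = \trdeg_F\Frac(A)$, then $D(f)\cong\Spec A_f$ with $\Frac(A_f)=\Frac(A)$ and the sandwich $\dim X = \dim D(f)\leq\dim U\leq\dim X$) is the standard proof of that cited result. The only point to state carefully is that equality $\dim A=\dim B$ for the integral extension uses lying over and incomparability in addition to going-up, but all of these are part of the Cohen--Seidenberg package you invoke, so the argument stands.
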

  
  Similarly, it is helpful to know when a subset of a topological space $X$ has strictly smaller dimension than $X$. In contrast to the result immediately above, this often happens for proper closed subsets of an affine variety. 
  
  \begin{prop}[Corollary 2.5.26, \cite{Liu}]
  \label{prop:closed}
  Let $X = \Spec A$ be an integral affine variety.  If $f\in A$ is nonzero, then every irreducible component of $V(f)$ has dimension $\dim X - 1$. In particular, every proper closed subset of $X$ has strictly smaller dimension than $X$.
   \end{prop}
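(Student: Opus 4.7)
The plan is to recognize this proposition as a consequence of Krull's Hauptidealsatz together with the dimension formula for finitely generated domains over a field. Since $X = \Spec A$ is an integral affine variety over some field $F$, the ring $A$ is a finitely generated $F$-algebra which is also an integral domain. In particular, $A$ is Noetherian, so every ideal has finitely many minimal primes over it, and the irreducible components of $V(f)$ correspond precisely to the minimal primes of $A$ containing $f$. So the task reduces to computing $\dim(A/\mathfrak{p})$ for each such minimal prime $\mathfrak{p}$.

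The first main step is to show that every minimal prime $\mathfrak{p}$ over the principal ideal $(f)$ has height exactly $1$. This is Krull's Hauptidealsatz: in a Noetherian ring, any minimal prime over a principal ideal has height at most $1$. Since $A$ is a domain and $f$ is nonzero, the ideal $(f)$ is nonzero, so any minimal prime containing it is itself nonzero and has height at least $1$, hence exactly $1$. The second main step is to invoke the dimension formula $\dim(A/\mathfrak{p}) + \operatorname{ht}(\mathfrak{p}) = \dim A$, which holds for any prime $\mathfrak{p}$ in a finitely generated domain $A$ over a field because such rings are catenary and satisfy the equality $\dim A = \trdeg_F \Frac(A)$. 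Combining these two steps gives $\dim(A/\mathfrak{p}) = \dim A - 1$ for every minimal prime over $(f)$, and since the irreducible components of $V(f)$ are exactly the $V(\mathfrak{p}) \cong \Spec(A/\mathfrak{p})$ for these $\mathfrak{p}$, this proves the first claim.

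For the ``in particular'' clause, let $Y \subsetneq X$ be a proper closed subset, so $Y = V(I)$ for some nonzero ideal $I \subseteq A$ (nonzero because $Y$ is proper and $A$ is a domain, so $V(0) = X$). Pick any nonzero $f \in I$; then $Y \subseteq V(f)$, and combining the inequality $\dim Y \leq \dim V(f)$ from Proposition 2.5.5 with the first part of the proposition yields $\dim Y \leq \dim X - 1 < \dim X$.

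The main obstacle in this plan is not the Hauptidealsatz itself but rather justifying the dimension formula $\operatorname{ht}(\mathfrak{p}) + \dim(A/\mathfrak{p}) = \dim A$; a direct inequality in one direction is routine, but the reverse requires the catenary property of finitely generated algebras over a field. The standard route is via Noether normalization: one finds an injection $F[T_1,\dots,T_n] \hookrightarrow A$ making $A$ a finite (hence integral) extension, so that $\dim A = n = \trdeg_F \Frac(A)$, and then uses the going-up/going-down theorems together with the fact that integral extensions preserve dimensions of quotients by primes lying over one another. This ingredient would be the most technical piece to work out in full, whereas the rest of the argument is a short assembly of standard facts.
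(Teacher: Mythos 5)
Your proof is correct. The paper does not prove this statement at all --- it is quoted directly from Liu's book (Corollary 2.5.26) as background --- and your argument via Krull's Hauptidealsatz together with the formula $\operatorname{ht}(\mathfrak{p}) + \dim(A/\mathfrak{p}) = \dim A$ for finitely generated domains over a field (proved by Noether normalization) is exactly the standard proof given in such references, with the ``in particular'' clause assembled correctly from the first part.
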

  
  So far in this section, the definition of dimension depends on the base field $F\subseteq \overline\QQ$, \emph{a priori}. However, the result below clarifies that dimension stays the same under base extension.  This allows us to ignore the field of definition to some extent, especially when defining the rank of a formula below, although the notion of integrality truly does depend on the base field, so care is still required when applying the previous two propositions.
  
  \begin{prop}[Proposition 3.2.7, \cite{Liu}]
  \label{prop:base-ext}
Let $F \subseteq L\subseteq \Qbar$ be fields.  Given an affine variety $V_F(f_1,\dots, f_k) = \Spec F[Y_0,\dots, Y_m]/(f_1,\dots, f_k)$, the affine variety 
$$
	V_L(f_1,\dots, f_k) =  \Spec L[Y_0,\dots, Y_m]/(f_1,\dots, f_k)
$$
 is the \emph{base extension} of the variety $V_F(f_1,\dots, f_k)$ to $L$, and these affine varieties have the same dimension.
   \end{prop}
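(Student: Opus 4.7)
The plan is to identify $B := L[Y_0,\ldots,Y_m]/(f_1,\ldots,f_k)$ as an integral ring extension of $A := F[Y_0,\ldots,Y_m]/(f_1,\ldots,f_k)$ and then invoke the standard fact from commutative algebra that an integral ring extension preserves Krull dimension. Since $\dim{\Spec R} = \dim{R}$ for any commutative ring $R$ by Proposition 2.5.8, this immediately yields $\dim{V_F(f_1,\ldots,f_k)} = \dim{V_L(f_1,\ldots,f_k)}$.

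First, I would verify that the natural map $A \to B$ is injective, so that $A$ sits as a subring of $B$. This reduces to checking
$$(f_1,\ldots,f_k)L[Y_0,\ldots,Y_m] \cap F[Y_0,\ldots,Y_m] = (f_1,\ldots,f_k)F[Y_0,\ldots,Y_m].$$
This can be shown directly: given $h = \sum g_i f_i$ with $h \in F[Y_0,\ldots,Y_m]$ and $g_i \in L[Y_0,\ldots,Y_m]$, fix an $F$-basis $\mathcal{B}$ of $L$, expand each coefficient of each $g_i$ in $\mathcal{B}$, and compare like basis-components on both sides to rewrite the sum with $g_i \in F[Y_0,\ldots,Y_m]$. (Equivalently, one could cite the faithful flatness of $L$ over $F$.)

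Next, I would show that $L[Y_0,\ldots,Y_m]$ is an integral extension of $F[Y_0,\ldots,Y_m]$. Because $L \subseteq \Qbar$, every element of $L$ is algebraic over $\QQ$ and hence over $F$. Any given element of $L[Y_0,\ldots,Y_m]$ involves only finitely many scalars from $L$, which together generate a finite extension $L_0/F$. Then $L_0[Y_0,\ldots,Y_m]$ is a finitely generated $F[Y_0,\ldots,Y_m]$-module, so each of its elements is integral over $F[Y_0,\ldots,Y_m]$. Integrality is preserved under quotients by corresponding ideals, so the inclusion $A \hookrightarrow B$ is also integral.

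Finally, I would invoke the classical theorem---a consequence of lying-over and going-up for integral extensions---that $\dim{A} = \dim{B}$ whenever $B$ is integral over $A$, completing the proof. I do not expect a substantial obstacle here: the algebraicity of $L/F$ guaranteed by $L \subseteq \Qbar$ is exactly what makes this direct argument work. The more general statement in \cite{Liu} for arbitrary field extensions requires the more delicate route via Noether normalization and additivity of transcendence degree over each minimal prime; the restriction to subfields of $\Qbar$ sidesteps that technicality cleanly.
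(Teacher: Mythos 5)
Your proof is correct, but it is worth knowing that the paper does not prove this statement at all: it is quoted verbatim as Proposition 3.2.7 of Liu's book, which establishes invariance of dimension under an \emph{arbitrary} field extension $K/k$ for schemes of finite type, by a less elementary route (essentially Noether normalization and the identification of dimension with transcendence degree of the residue/function fields over each component). Your argument is a genuinely different, self-contained alternative that trades generality for elementarity: because $F\subseteq L\subseteq\Qbar$, the extension $L/F$ is algebraic, so $L[Y_0,\dots,Y_m]$ is integral over $F[Y_0,\dots,Y_m]$, and after checking that $(f_1,\dots,f_k)L[Y_0,\dots,Y_m]\cap F[Y_0,\dots,Y_m]=(f_1,\dots,f_k)F[Y_0,\dots,Y_m]$ (faithful flatness of $L$ over $F$, or your basis computation, where you should take the $F$-basis of $L$ to contain $1$ so that comparing the $1$-component rewrites $h$ as an $F[Y_0,\dots,Y_m]$-combination of the $f_i$), the quotient map $A\hookrightarrow B$ is an injective integral extension, whence $\dim A=\dim B$ by lying-over, going-up and incomparability, and Proposition 2.5.8 converts this into equality of the dimensions of the spectra. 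What the citation to Liu buys the paper is uniformity (no hypothesis on the extension) and economy; what your argument buys is a short proof using only standard commutative algebra, exactly adapted to the situation $L\subseteq\Qbar$ in which the proposition is applied. The only caveat is that elsewhere the paper also uses part (b) of Liu's Proposition 3.2.7 (reducedness under separable extension), which your integral-extension argument does not recover, so the citation cannot be dispensed with entirely.
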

   
   To apply this proposition to open sets, we remark that open sets can be equivalently viewed as affine varieties themselves, albeit in a different ambient space with an extra variable.

   \begin{cor} \label{cor:localization} Let $F\subseteq \Qbar$ be a
     field.  For polynomials
     $g, f_1, \dots, f_k\in F[Y_0, \dots, Y_m]$, define
     $A = F[Y_0, \dots, Y_m]/(f_1, \dots, f_k)$ and let $A_g$ be the
     localization of $A$ be the element $g$.  Then there are
     isomorphisms of ringed topological spaces
$$
	V_F(f_1, \dots, f_k)\cap D(g) \cong \Spec(A_g) \cong V_F(f_1, \dots, f_k, Y_{m+1}g - 1).
$$
In particular, $\dim{V_F(f_1, \dots, f_k)\cap D(g)} = \dim{V_L(f_1, \dots, f_k)\cap D(g)}$ for any algebraic extension of fields $L\supseteq K$.
 \end{cor}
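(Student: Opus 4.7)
The plan is to reduce the corollary to two standard algebraic facts, namely that localization corresponds to restriction to basic open subsets in $\Spec$, and that inverting a single element is equivalent to adjoining a new variable modulo a specific relation. Once these two isomorphisms are established, the dimension statement is then an immediate application of Proposition \ref{prop:base-ext}.

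First, I would prove the isomorphism $\Spec(A_g) \cong V_F(f_1,\dots,f_k) \cap D(g)$. The canonical localization map $A \to A_g$ induces a morphism of ringed spaces $\Spec(A_g) \to \Spec(A) = V_F(f_1,\dots,f_k)$ whose image is exactly $D(g)$, because a prime ideal $\mathfrak{p} \subset A$ extends to a prime of $A_g$ precisely when $g \notin \mathfrak{p}$. This is a well-known basic property of spectra of rings and holds at the level of ringed spaces (not merely as topological spaces), so the isomorphism is essentially by definition.

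Next, for the isomorphism $\Spec(A_g) \cong V_F(f_1,\dots,f_k, Y_{m+1}g - 1)$, I would exhibit a ring isomorphism
\[
A_g \;\cong\; A[Y_{m+1}]/(Y_{m+1}g - 1) \;=\; F[Y_0,\dots,Y_{m+1}]/(f_1,\dots,f_k, Y_{m+1}g - 1).
\]
The map from $A[Y_{m+1}]/(Y_{m+1}g - 1)$ to $A_g$ sends $Y_{m+1} \mapsto 1/g$; it is well-defined because $Y_{m+1}g - 1$ maps to $0$, and surjective since every element of $A_g$ has the form $a/g^n$ which is the image of $aY_{m+1}^n$. Injectivity follows from the universal property of localization: any ring map $A \to B$ sending $g$ to a unit factors through $A_g$, and here we lift the image of $Y_{m+1}$ to the inverse of the image of $g$. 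Taking $\Spec$ yields the claimed isomorphism of ringed spaces.

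Combining the two isomorphisms gives $V_F(f_1,\dots,f_k) \cap D(g) \cong V_F(f_1,\dots,f_k, Y_{m+1}g - 1)$, and the same holds over $L$. The right-hand side is an honest affine variety in $m+2$ variables, so Proposition \ref{prop:base-ext} applies and shows its dimension is preserved under base extension from $F$ to $L$. Hence
\[
\dim\bigl(V_F(f_1,\dots,f_k) \cap D(g)\bigr) = \dim\bigl(V_L(f_1,\dots,f_k) \cap D(g)\bigr),
\]
as required. There is no real obstacle here; the only point demanding care is verifying that the first isomorphism truly holds as ringed spaces and not merely as topological spaces, but this is standard and follows from the universal property of localization cited above.
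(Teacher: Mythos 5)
Your proposal is correct and follows essentially the same route as the paper: the paper cites \cite[Lemma 2.3.7]{Liu} for the identification $\Spec(A_g)\cong V_F(f_1,\dots,f_k)\cap D(g)$ and \cite[Lemma \S6.2]{R95} for the ring isomorphism $A_g\cong F[Y_0,\dots,Y_{m+1}]/(f_1,\dots,f_k,Y_{m+1}g-1)$, then invokes Proposition \ref{prop:base-ext} exactly as you do. You merely spell out the standard universal-property arguments behind those two citations, which is fine.
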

  \begin{proof} 
  The first isomorphism is \cite[Lemma 2.3.7]{Liu}.  The second isomorphism actually follows from a well-known isomorphism of underlying rings 
  $$
  	A_g \cong F[Y_0,\dots, Y_{m+1}]/(f_1, \dots, f_k, Y_{m+1}g - 1);
$$
see \cite[Lemma \S6.2]{R95}.  Therefore, the statement on dimension follows immediately from Proposition \ref{prop:base-ext}.
\end{proof}

\subsection{Thin sets}
\label{subsec:thin}
Hilbert's Irreducibility Theorem can take many different forms, but we
put a simple version here that suffices for the purposes of this
article.  For brevity, we present \emph{thin sets} as a black box, and
refer the reader to \cite[Prop.\ 3.3.5]{tigt} for more details.
Essentially, a thin subset $T\subseteq K$ of a number field is small,
in the view of arithmetic geometry. For example, any set of points
that is contained in a closed subvariety of affine $n$-space $K^n$,
and which is different from the entire space, is thin with respect to
$K$. All necessary details can be deduced from the results we recall
below .

\begin{thm}[Hilbert's Irreducibility Theorem] \label{HIT} Let
  $f(Y_0,Y_1,\dots, Y_m)$ be a polynomial with coefficients in a
  number field $K$ which is irreducible as an $(m+1)$-variable
  polynomial.  There exists a thin set $T\subseteq K^m$ such that if
  $(y_1,\dots, y_m)\in K^m \setminus T$, then $f(Y_0,y_1,\dots, y_m)$
  is an irreducible single-variable polynomial of degree
  $\dg_{Y_0}(f)$.
 \end{thm}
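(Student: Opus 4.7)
The plan is to express the set of bad specializations as a finite union of thin sets of Type I (subsets of proper closed subvarieties of $\AA_K^m$) and Type II (images of $K$-rational points under generically finite covers of degree $\geq 2$); such a finite union is itself thin by the basic properties of thin sets in \cite[\S 3.1]{tigt}. To capture the drop in degree, I would first note that the coefficient $c(Y_1, \ldots, Y_m) \in K[Y_1, \ldots, Y_m]$ of $Y_0^n$ in $f$, where $n = \dg_{Y_0}(f)$, is nonzero, so $V(c)(K) \subsetneq K^m$ is a Type I contribution capturing exactly those $(y_1,\ldots, y_m)$ with $\dg_{Y_0} f(Y_0, y_1, \ldots, y_m) < n$.

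For the reducibility of the specialization $f(Y_0, y_1, \ldots, y_m)$, I would next observe that $f$, being irreducible as a multivariable polynomial over the UFD $R = K[Y_1, \ldots, Y_m]$ with $\dg_{Y_0}(f) \geq 1$, has unit content with respect to $Y_0$, so Gauss's lemma shows $f$ is irreducible in $E[Y_0]$ for $E := K(Y_1, \ldots, Y_m)$. Let $L/E$ be a splitting field of $f$, with roots $\theta_1, \ldots, \theta_n$, and let $G := \Gal{L}{E}$, which acts transitively on $\{\theta_1, \ldots, \theta_n\}$. For each maximal proper subgroup $H < G$, the fixed field $L^H$ is a proper finite extension of $E$; picking a primitive element $\alpha_H$ for $L^H/E$ and clearing denominators yields an irreducible polynomial $g_H(X, Y_1, \ldots, Y_m) \in K[X, Y_1, \ldots, Y_m]$ of $X$-degree $[L^H:E] \geq 2$. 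The projection $V(g_H) \to \AA_K^m$ is then generically finite of degree $\geq 2$, so the image of its $K$-rational points is a Type II thin set $T_H$.

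The main step, and the principal technical obstacle, is a specialization-of-Galois-groups argument: if $(y_1,\ldots, y_m) \in K^m$ avoids $V(c)(K) \cup \bigcup_H T_H$ together with a further Type I piece on which the denominators appearing in the defining data of the $g_H$ vanish, then the Galois group of the splitting field of $f(Y_0, y_1, \ldots, y_m)$ over $K$ embeds into $G$ and is not contained in any maximal proper subgroup, hence equals $G$; transitivity of this Galois group on the specialized roots then forces $f(Y_0, y_1, \ldots, y_m)$ to be irreducible. The embedding is constructed by choosing a place of $L$ extending the specialization $Y_i \mapsto y_i$ and identifying its decomposition group with a subgroup of $G$, while the Type II conditions exactly rule out landing inside any maximal subgroup of $G$ (landing inside $H$ would mean $\alpha_H$ specializes to an element of $K$, i.e., $(y_1, \ldots, y_m) \in T_H$). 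Finiteness of the union of these thin contributions, together with the fact that subsets of thin sets are thin, then yields the desired $T \subseteq K^m$.
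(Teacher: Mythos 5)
The paper offers no proof of Theorem \ref{HIT}: it is stated as a known form of Hilbert's Irreducibility Theorem, with thin sets and the theorem itself taken as a black box from \cite{tigt}, so there is no internal argument to compare yours against line by line. Your route is the classical one: a Type I piece for the vanishing of the leading $Y_0$-coefficient, Gauss's lemma to see that $f$ stays irreducible over $E=K(Y_1,\dots,Y_m)$, and, for each maximal subgroup $H$ of the generic Galois group $G$, a Type II piece coming from the cover attached to $L^H$, with a decomposition-group specialization argument showing that a point avoiding all these pieces has full specialized Galois group. That is a legitimate and standard proof strategy, and the bookkeeping with the $g_H$ (irreducibility, $X$-degree $[G:H]\geq 2$, hence genuinely a Type II contribution) is right.

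There is, however, a concrete gap: you never exclude the discriminant locus. Let $\Delta=\mathrm{disc}_{Y_0}(f)\in K[Y_1,\dots,Y_m]$, which is nonzero because $f$ is irreducible, hence separable, over $E$ (characteristic $0$). At a point $y$ with $c(y)\neq 0$ but $\Delta(y)=0$, the specialized polynomial has a repeated root and is therefore reducible, yet nothing in your construction visibly places such $y$ inside $V(c)(K)\cup\bigcup_H T_H$, and your final inference ``the specialized Galois group equals $G$, and transitivity on the specialized roots forces irreducibility'' fails precisely when the reduction map on the roots $\theta_1,\dots,\theta_n$ is not injective: for instance $(Y_0^2+1)^2$ over $\Q$ has a Galois group acting transitively on its two distinct roots but is reducible of degree $4$. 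Relatedly, the specialized Galois group is naturally a quotient $D/I$ of the decomposition group by inertia, not obviously a subgroup of $G$, so the clause ``embeds into $G$'' needs justification. Both issues are repaired at once by adding $V(\Delta)(K)$ as one more Type I piece: off this locus the reduced roots $\bar\theta_1,\dots,\bar\theta_n$ are distinct, inertia fixes each of them and hence (since the $\theta_i$ generate $L$ over $E$) is trivial, reduction is a $D$-equivariant bijection onto the roots of the specialization, and your argument then goes through. You also implicitly assume $\dg_{Y_0}(f)\geq 1$ when invoking Gauss's lemma; the degenerate case deserves a sentence, since the theorem as stated is vacuous there.
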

  
 In order for the theorem above to be non-trivial, we need to know
 that $K^m$ is not a thin subset of itself, and this is indeed true
 for all number fields \cite[Prop 3.4.1]{tigt}.  Moreover, the
 propositions below show that thin sets cannot contain arithmetically
 important subsets, which will allow us to use Hilbert's
 Irreducibility Theorem in the cases we care about.

\begin{prop}[Proposition 3.2.1, \cite{tigt} ]
\label{prop:fin-int}
If $L/K$ is a finite extension of fields and $T\subseteq L^m$ is thin with respect to $L$, then $T\cap K^m$ is thin with respect to $K$.
 \end{prop}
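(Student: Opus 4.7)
The plan is to invoke the standard classification (as in \cite[\S3.1]{tigt}) expressing any thin subset of $L^m$ as a finite union of two basic types: a type (i) set $V(L)$ where $V \subsetneq \AA^m_L$ is a proper Zariski-closed subvariety, and a type (ii) set $\pi(W(L))$ where $W$ is an integral $L$-variety and $\pi\colon W \to \AA^m_L$ is a generically finite morphism of degree at least $2$. Since intersection with $K^m$ distributes over finite unions, it suffices to show that the intersection of each basic piece with $K^m$ is thin in $K^m$.

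For type (i), I would fix a $K$-basis $e_1,\ldots,e_n$ of $L$. If $V$ is cut out by $f_1,\ldots,f_r \in L[Y_1,\ldots,Y_m]$, expand each polynomial as $f_i = \sum_{j=1}^n e_j f_{i,j}$ with $f_{i,j}\in K[Y_1,\ldots,Y_m]$. For $y\in K^m$ the evaluation $f_i(y)$ is a $K$-linear combination of the $e_j$, so the $K$-linear independence of the basis forces $f_i(y)=0$ to be equivalent to $f_{i,j}(y)=0$ for all $j$. Hence $V(L)\cap K^m = V'(K)$ where $V' \subseteq \AA^m_K$ is the subvariety defined by all the $f_{i,j}$. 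Since $V$ is proper, at least one $f_i$ is nonzero, so at least one $f_{i,j}$ is nonzero, and $V'$ is proper in $\AA^m_K$. This gives a type (i) thin set over $K$.

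For type (ii), the key tool is the Weil restriction of scalars $R_{L/K}$, which attaches to any quasi-projective $L$-scheme $X$ a quasi-projective $K$-scheme with the defining property $R_{L/K}(X)(K) = X(L)$. Applied to $\pi$, this yields $\tilde\pi = R_{L/K}(\pi)\colon R_{L/K}(W) \to R_{L/K}(\AA^m_L) \cong \AA^{mn}_K$, where $n=[L:K]$. The inclusion $K^m \hookrightarrow L^m$ is recovered on the target by a closed immersion $\iota\colon \AA^m_K \hookrightarrow \AA^{mn}_K$ (in $K$-basis coordinates, this is a diagonal-type linear embedding). Pulling back $\tilde\pi$ along $\iota$ produces a $K$-scheme $W' = R_{L/K}(W) \times_{\AA^{mn}_K} \AA^m_K$ with a morphism $\pi'\colon W' \to \AA^m_K$ satisfying $\pi'(W'(K)) = \pi(W(L))\cap K^m$ tautologically from the functorial definition of $R_{L/K}$.

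The main obstacle is then verifying that $\pi'(W'(K))$ is thin in $K^m$, which requires a fiberwise analysis of the irreducible components of $W'$. Each component either fails to dominate $\AA^m_K$ (contributing a type (i) thin set via its image) or dominates with a well-defined generic degree. The delicate point is showing that every dominating component has degree at least $2$ over $\AA^m_K$; I would argue this by noting that for $y \in K^m$ the fiber $(\pi')^{-1}(y)(K)$ is naturally in bijection with $\pi^{-1}(y)(L)$, and since the generic fiber of $\pi$ over $\AA^m_L$ has at least two $L$-points (away from a proper closed subvariety of $\AA^m_L$, whose intersection with $K^m$ is handled by the type (i) case), the same lower bound on fiber size transfers to the components of $W'$ meeting $K^m$ generically. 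A careful bookkeeping of components that degenerate under pullback along $\iota$ is the technical heart of the argument, after which each piece is seen to be either a type (i) or type (ii) thin set over $K$, completing the proof.
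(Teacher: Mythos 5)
Your overall route is the standard one --- the paper itself gives no argument for this proposition, simply citing Serre --- and most of it is sound: the reduction to the two basic types (note a thin set is \emph{contained in}, not equal to, such a finite union, which is harmless here), the treatment of type (i) by expanding the defining polynomials over a $K$-basis of $L$, and the construction $W' = R_{L/K}(W)\times_{\AA^{mn}_K}\AA^m_K$ with $\pi'(W'(K)) = \pi(W(L))\cap K^m$ are all correct (modulo the harmless reduction to $W$ affine or quasi-projective so that the Weil restriction exists, with the discarded closed locus absorbed into a type (i) set).

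The gap is precisely in the step you flag as delicate. You propose to rule out degree-one dominating components of $W'$ by asserting that, away from a proper closed subvariety of $\AA^m_L$, the fibers of $\pi$ contain at least two $L$-rational points. That is false: degree $\geq 2$ bounds the number of \emph{geometric} points of a generic fiber, not the number of $L$-rational ones. For the degree-two cover $\AA^1_L\to\AA^1_L$, $x\mapsto x^2$, the fibers with no $L$-point at all lie over the non-squares of $L$, which are not contained in any proper closed subset; indeed the whole content of type (ii) thinness is that most fibers of such covers have \emph{no} rational points. Moreover, even if every fiber over $K^m$ did carry two $K$-points of $W'$, that would not show each dominating component has degree $\geq 2$: a birational component could coexist with other components, and rational-point counts in fibers do not control generic degree. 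The step should instead be argued birationally, not pointwise: if some component of $W'$ dominated $\AA^m_K$ with degree one, it would furnish a section of $\pi'$ over a dense open $U\subseteq\AA^m_K$; by the universal property of $R_{L/K}$ relative to the bases, $K$-morphisms $U\to W'$ over $\AA^m_K$ correspond to $L$-morphisms $U_L\to W$ over $\AA^m_L$, i.e.\ to a rational section of $\pi$ defined on the dense open $U_L$; the closure of the image of such a section is an irreducible closed subset of $W$ of dimension $m$, hence all of $W$, forcing $\pi$ to have degree one, a contradiction. With that substitution (and your observation that non-dominating components contribute type (i) sets), the proof goes through.
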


\begin{prop}
\label{thin}
If $K$ is a number field, then no thin subset of $K$ contains either $\ZZ$ or $\QQ\setminus \ZZ$.
 \end{prop}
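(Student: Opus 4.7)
The plan is to reduce the problem to $K=\QQ$ using Proposition~\ref{prop:fin-int}, and then to handle the two claims separately via the classical form of Hilbert's Irreducibility Theorem which asserts that $\ZZ$ is a Hilbert set. Concretely, if $T\subseteq K$ is thin in $K$ and contains $\ZZ$ (resp.\ $\QQ\setminus\ZZ$), then $T\cap\QQ$ is thin in $\QQ$ by Proposition~\ref{prop:fin-int} and still contains the same subset, so I may assume $K=\QQ$. Unpacking the definition of thin sets in the one-dimensional case, any such $T$ can be written as $T = F \cup T_{f_1}\cup\cdots\cup T_{f_k}$, where $F\subseteq\QQ$ is finite and each
\[
T_{f_i} := \{y\in\QQ : f_i(x,y)=0\text{ has some solution } x\in\QQ\}
\]
comes from an irreducible polynomial $f_i\in\QQ[x,y]$ with $\dg_x(f_i)\geq 2$.

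For the $\ZZ$ claim, I appeal to the classical version of Hilbert's Irreducibility Theorem over $\ZZ$: for every irreducible $f_i(x,y)\in\QQ[x,y]$ with $\dg_x(f_i)\geq 2$, there are infinitely many $n\in\ZZ$ such that $f_i(x,n)\in\QQ[x]$ is irreducible. A standard intersection argument (repeatedly invoking classical HIT for the finitely many $f_i$'s) produces an $n\in\ZZ\setminus F$ for which every $f_i(x,n)$ is irreducible of degree $\geq 2$ in $\QQ[x]$, and hence has no rational root. Such an $n$ lies outside every $T_{f_i}$ as well as outside $F$, contradicting $\ZZ\subseteq T$.

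For the $\QQ\setminus\ZZ$ claim, the plan is to reduce to the previous case via a birational change of variable. The infinite subset $\{1/n : n\in\ZZ_{\geq 2}\}$ of $\QQ\setminus\ZZ$ is entirely contained in $T$, so to derive a contradiction it suffices to produce a single $n\geq 2$ with $1/n\notin T$. For each $f_i$ with $d_i := \dg_y(f_i)$, set $g_i(x,z) := z^{d_i}f_i(x,1/z)\in\QQ[x,z]$, and factor out the maximal power of $z$ to write $g_i = z^{s_i}h_i(x,z)$. I will show that $h_i\in\QQ[x,z]$ is irreducible with $\dg_x(h_i) = \dg_x(f_i)\geq 2$. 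Granting this, another application of classical HIT (now to the polynomials $h_1,\ldots,h_k$) yields an $n\in\ZZ_{\geq 2}$ such that every $h_i(x,n)$ is irreducible in $\QQ[x]$; then $f_i(x,1/n) = n^{s_i-d_i}h_i(x,n)$ has no rational root, so $1/n\notin T_{f_i}$ for any $i$, which is the desired contradiction.

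The main obstacle is verifying the irreducibility of $h_i$, since $y\mapsto 1/z$ is birational but not polynomial. The key point is that any nontrivial factorization of $h_i$ in $\QQ(x)[z]$ would, upon substituting $z = 1/y$ and clearing denominators, yield a nontrivial factorization of $f_i$ in $\QQ(x)[y]$, contradicting its irreducibility there (which itself follows from Gauss's lemma applied to $f_i\in\QQ[x,y]$). Gauss's lemma applied in the opposite direction then elevates irreducibility of $h_i$ in $\QQ(x)[z]$ to irreducibility in $\QQ[x,z]$, completing the argument.
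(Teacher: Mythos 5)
Your proof is correct, but it takes a genuinely more hands-on route than the paper. The paper's argument is two lines: after the same reduction to $K=\QQ$ via Proposition \ref{prop:fin-int}, it simply cites Serre \cite[Theorem 3.4.4]{tigt} for $\ZZ$ and \cite[Prop.~3.4.2]{tigt} for $\QQ\setminus\ZZ$. You instead unpack the definition of a thin subset of the affine line as $F\cup T_{f_1}\cup\cdots\cup T_{f_k}$ and argue directly. For the $\ZZ$ half you invoke the classical integral form of Hilbert's Irreducibility Theorem; be aware that this form is essentially the assertion that $\ZZ$ is not thin, so this half is still a citation, just to a different classical statement than the one the paper uses, and to get a single $n$ good for all the $f_i$ simultaneously you should appeal to the simultaneous (finitely-many-polynomials) form of classical HIT or to the counting bound, rather than to repeated single-polynomial applications, since the separate infinite sets of good $n$ need not a priori meet. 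The genuine added content is your treatment of $\QQ\setminus\ZZ$: transferring the problem to $\ZZ$ via the reciprocal substitution $y\mapsto 1/z$, with the irreducibility of the reversed polynomial $h_i$ correctly pushed through $\QQ(x)[z]$ and Gauss's lemma (note the power of $z$ you factor out is in fact trivial, since the constant $z$-coefficient of $g_i$ is the leading $y$-coefficient of $f_i$, which is nonzero). The paper's approach buys brevity and stays within the reference it already uses for thin sets; yours buys a self-contained explanation of why $\QQ\setminus\ZZ$ is not thin, at the cost of some glossed bookkeeping (choosing $n$ outside finitely many further exclusions so that the $x$-degrees do not drop and $1/n\notin F$).
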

  \begin{proof} 
    Thin sets of $\QQ$ cannot contain $\ZZ$ or $\QQ\setminus \ZZ$ by
    \cite[Theorem 3.4.4]{tigt} and \cite[Prop. 3.4.2]{tigt},
    respectively.  Thus, the result for arbitrary number fields
    follows from Proposition \ref{prop:fin-int}.
\end{proof}

Moreover, we can understand thin sets in products. This lemma will be used to show that if a set $Z\subseteq \QQ$ is not thin, then the product $Z\times \QQ^n$ cannot be thin, either.

\begin{lemma} 
\label{lem:thin_prod}
If $n\geq 0$ and $S\subseteq \QQ$ is a set such that $ S\times \QQ^n \subseteq \QQ^{n+1}$ is thin, then $S\subseteq \QQ$ is thin.
 \end{lemma}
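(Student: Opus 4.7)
My plan is to expose $S$ as a finite union of thin subsets of $\QQ$ by slicing a hypothetical thin cover of $S\times\QQ^n$ vertically. Assume $S\times\QQ^n \subseteq T_1\cup\cdots\cup T_k$ where each $T_i$ is either a Serre type (C1) piece (contained in some proper Zariski closed $V(f_i)\subsetneq \mathbb{A}^{n+1}$, with $f_i\in\QQ[X_0,\ldots,X_n]$ nonzero) or a type (C2) piece ($T_i=\pi_i(W_i(\QQ))$ for some irreducible $W_i$ and a generically finite dominant morphism $\pi_i\colon W_i\to\mathbb{A}^{n+1}$ of degree $d_i\geq 2$). For each $s\in S$ the slice $\{s\}\times\QQ^n$ is covered by the fibers $T_i'(s):=\{y\in\QQ^n:(s,y)\in T_i\}$; since $\QQ^n$ is non-thin by Proposition 3.4.1 of \cite{tigt}, at least one $T_i'(s)$ must be non-thin, so setting $S_i:=\{s\in S:T_i'(s)\text{ non-thin in }\QQ^n\}$ yields $S=\bigcup_{i=1}^k S_i$, reducing the task to proving each $S_i$ is thin.

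For a type (C1) piece $T_i\subseteq V(f_i)$, I would write $f_i = \sum_\alpha c_\alpha(X_0)X^\alpha$ as a polynomial in $(X_1,\ldots,X_n)$ with coefficients in $\QQ[X_0]$. Since $f_i\neq 0$ some $c_\alpha\not\equiv 0$, and whenever $c_\alpha(s)\neq 0$ the specialization $f_i(s,X_1,\ldots,X_n)$ is a nonzero polynomial, so $T_i'(s)\subseteq V(f_i(s,\cdot))$ lies in a proper closed subvariety of $\mathbb{A}^n$ and is thin. Thus $S_i$ is contained in the finite zero set of $c_\alpha$ and is in fact finite.

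The type (C2) case is the main obstacle. Using the primitive element theorem, I would replace $W_i$, up to a (C1) discrepancy absorbable into another summand, by the hypersurface $V(P_i)\subset\mathbb{A}^{n+2}$ with $P_i\in\QQ[T,X_0,\ldots,X_n]$ irreducible of $T$-degree $d_i\geq 2$, so that $T_i'(s)=\{y\in\QQ^n:P_i(T,s,y)\text{ has a root in }\QQ\}$. If $P_i(T,s,X_1,\ldots,X_n)$ were to remain irreducible in $\QQ[T,X_1,\ldots,X_n]$ of $T$-degree $d_i\geq 2$, a direct application of Theorem \ref{HIT} to this $(n+1)$-variable irreducible polynomial would produce a thin subset of $\QQ^n$ outside of which $P_i(T,s,y)$ has no rational root, forcing $T_i'(s)$ to be thin. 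So $S_i$ would be contained in the ``specialization failure set''
\[
\Theta_i := \{s\in\QQ:P_i(T,s,X_1,\ldots,X_n)\text{ factors in }\QQ[T,X_1,\ldots,X_n]\text{ or drops }T\text{-degree}\}.
\]
The hard step is to show $\Theta_i$ is itself thin in $\QQ$. This is a ``partial specialization'' form of Hilbert's Irreducibility Theorem; I would establish it either by invoking the classical Hilbertianity of the function field $\QQ(X_1,\ldots,X_n)$, combined with Gauss's lemma to identify factorization over the polynomial ring $\QQ[T,X_1,\ldots,X_n]$ with factorization over $\QQ(X_1,\ldots,X_n)[T]$, or equivalently by an induction on the number of ``free'' variables that peels them off one at a time via Theorem \ref{HIT}. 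Granting this, $S=\bigcup_i S_i$ is a finite union of thin sets and hence thin.
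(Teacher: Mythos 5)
Your slicing strategy is workable in outline, and the type (C1) analysis is fine, but the argument has a genuine gap at its load-bearing step: the claim that the partial-specialization set $\Theta_i$ is thin in $\QQ$. What you need there is an exceptional-set form of Hilbert irreducibility in which only \emph{one} variable is specialized while $T$ and $X_1,\dots,X_n$ all remain free; this is a strictly stronger statement than Theorem \ref{HIT}, and neither of your proposed justifications delivers it. Hilbertianity of $\QQ(X_1,\dots,X_n)$ (plus Gauss's lemma) only shows that the bad specializations form a thin subset \emph{of the field} $\QQ(X_1,\dots,X_n)$, whereas you need the bad $s$ to form a thin subset of $\QQ$; the only restriction principle available, Proposition \ref{prop:fin-int} (Serre, Prop.\ 3.2.1), applies to \emph{finite} extensions and says nothing about passing from a rational function field down to its constant field. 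The alternative of ``peeling off variables via Theorem \ref{HIT}'' goes in the wrong direction, since Theorem \ref{HIT} specializes all variables but one. Indeed, the natural way to bring Theorem \ref{HIT} to bear is to note that if $P_i(T,s,X_1,\dots,X_n)$ factors with both factors of positive $T$-degree, then for every $\vec x\in\QQ^n$ the specialization $P_i(T,s,\vec x)$ is reducible or of deficient $T$-degree, so the whole slice $\{s\}\times\QQ^n$ lies in the exceptional thin set of Theorem \ref{HIT} for $P_i$; but concluding from this that the set of such $s$ is thin in $\QQ$ is exactly Lemma \ref{lem:thin_prod}, so the argument becomes circular. Repairing your route would require actually proving the partial-specialization theorem (e.g.\ via the Galois action on the factors of $P_i$ over $\overline{\QQ(X_0)}$, producing finitely many covers of the $X_0$-line of degree at least $2$ whose rational points capture the bad $s$), which is a substantial piece of work not sketched here.

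For comparison, the paper's proof is a two-line application of Serre's Proposition 3.2.3: there is a line $\mathcal{L}\subseteq\QQ^{n+1}$ whose projection to the first coordinate is all of $\QQ$ and for which $\mathcal{L}\cap(S\times\QQ^n)$ is thin in $\mathcal{L}$; since that projection identifies $\mathcal{L}$ with $\QQ$ and carries $\mathcal{L}\cap(S\times\QQ^n)$ onto $S$, the set $S$ is thin. If you want a fiberwise argument in the spirit of your proposal, the line-slicing result is precisely the tool that replaces the missing partial-specialization theorem.
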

\begin{proof}
  There is a line $\mathcal L\subseteq \QQ^{n+1}$ such that
  $\mathcal L \cap (S\times \QQ^n)$ is thin in $\mathcal L$
  and the projection of $\mathcal L$ to the first coordinate is
  all of $\QQ$ \cite[Proposition 3.2.3]{tigt}.  As $\mathcal L$ is a
  line, this projection is an isomorphism and
  $\mathcal L \cap (S\times \QQ^n)$ maps onto to the set $S$.
  Therefore, $S$ is thin in $\QQ$.
\end{proof}

Finally, we prove a proposition that lets us stitch this material together.  This is ultimately the result that is required in the proof of our main theorem.

\begin{prop} 
\label{prop:hit2}
Let $K$ be a number field and let $f(X, Y_1, \dots, Y_m), g(X, Y_1, \dots, Y_m) \in K[X, Y_1,\dots, Y_m]$ be relatively prime irreducible polynomials. Then there is a thin set $T\subseteq K^m$ such that $f(x, y_1, \dots, y_{m-1}, Y)$ and $g(x, y_1, \dots, y_{m-1}, Y)$ are relatively prime irreducible single-variable polynomials for every $(x,y_1,\dots, y_{m-1})\in K^m\setminus T$, of degrees $\dg_{Y_m}(f)$ and $\dg_{Y_m}(g)$, respectively.
 \end{prop}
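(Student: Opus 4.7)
The plan is to handle the irreducibility of $f$ and $g$ separately via two applications of Hilbert's Irreducibility Theorem (Theorem \ref{HIT}), and then to handle the relative primality via a resultant argument. Applying Theorem \ref{HIT} to the irreducible $(m+1)$-variable polynomial $f$ produces a thin set $T_f\subseteq K^m$ such that for every $(x,y_1,\dots,y_{m-1})\in K^m\setminus T_f$ the single-variable polynomial $f(x,y_1,\dots,y_{m-1},Y)$ is irreducible in $K[Y]$ and has the correct degree $\dg_{Y_m}(f)$. The analogous application to $g$ yields a thin set $T_g\subseteq K^m$. Finite unions of thin sets are thin, so $T_f\cup T_g$ is thin.

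Next, to force the specializations to be coprime in $K[Y]$, I would introduce the resultant $R(X,Y_1,\dots,Y_{m-1}):=\mathrm{Res}_{Y_m}(f,g)\in K[X,Y_1,\dots,Y_{m-1}]$. Since $f$ and $g$ are coprime irreducibles in the UFD $K[X,Y_1,\dots,Y_{m-1}][Y_m]$, Gauss's lemma shows that they remain coprime as elements of $K(X,Y_1,\dots,Y_{m-1})[Y_m]$, so $R$ is a nonzero polynomial. The vanishing locus $V(R)$ is then a proper closed subvariety of affine $m$-space over $K$, and its $K$-rational points therefore form a thin subset of $K^m$ by the characterization recalled immediately after Theorem \ref{HIT}.

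Set $T:=T_f\cup T_g\cup (V(R)\cap K^m)$; this is a finite union of thin sets and hence thin. Irreducibility and correct degree of the two specializations hold outside of $T_f\cup T_g$ by construction. For relative primality, the key observation is that because Theorem \ref{HIT} preserves the degree of each polynomial in $Y_m$ on $K^m\setminus (T_f\cup T_g)$, the leading coefficients of $f$ and $g$ as polynomials in $Y_m$ (which lie in $K[X,Y_1,\dots,Y_{m-1}]$) do not vanish at any such point. This is precisely the condition ensuring that the resultant commutes with specialization, so for $(x,y_1,\dots,y_{m-1})\notin T$ we obtain
\[
\mathrm{Res}_{Y}\bigl(f(x,y_1,\dots,y_{m-1},Y),\,g(x,y_1,\dots,y_{m-1},Y)\bigr)=R(x,y_1,\dots,y_{m-1})\neq 0,
\]
which forces the two specialized single-variable polynomials to be coprime in $K[Y]$.

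The main obstacle is the compatibility check in the previous paragraph: the nonvanishing of $R$ at a specialization point only yields nonvanishing of the resultant of the specialized polynomials when the $Y_m$-leading coefficients of $f$ and $g$ survive, so one must ensure this alongside the irreducibility step rather than as an independent condition. The degree-preservation guarantee of Theorem \ref{HIT} is exactly what is needed, but it has to be explicitly identified and used; the trivial edge case in which $f$ or $g$ has degree $0$ in $Y_m$ is excluded since then the stated conclusion about ``single-variable polynomials of degree $\dg_{Y_m}$'' already fails in a content-free way and does not arise in the applications of this proposition.
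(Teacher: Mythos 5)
Your proposal is correct, and it matches the paper for the irreducibility half: both proofs apply Hilbert's Irreducibility Theorem to $f$ and $g$ separately and take the union of the two resulting thin sets. Where you diverge is the coprimality step. The paper exploits the fact that outside those thin sets the two specializations are already irreducible of the stated degrees (the unequal-degree case being trivial): a failure of coprimality then forces them to be unit multiples of one another, which translates into the equations $f_ig_j = g_if_j$ at the specialization point; a short direct argument shows that some $f_ig_j - g_if_j$ is not the zero polynomial (otherwise $f$ and $g$ would be associates, contradicting relative primality), so the bad points lie on a proper closed subvariety of $K^m$ and hence form a thin set. You instead introduce $R=\mathrm{Res}_{Y_m}(f,g)$, observe that $R\neq 0$ because $f,g$ remain coprime in $K(X,Y_1,\dots,Y_{m-1})[Y_m]$, and add the $K$-points of $V(R)$ to the thin set; the one delicate point, which you correctly identify and discharge, is that nonvanishing of $R$ at a point controls the resultant of the specialized polynomials only when the $Y_m$-leading coefficients of $f$ and $g$ survive, and this is exactly what the degree clause of Theorem \ref{HIT} guarantees outside $T_f\cup T_g$. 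Your route relies on the (standard) compatibility of resultants with specialization but not on the irreducibility of the specializations, so it would prove the coprimality conclusion for arbitrary coprime $f,g$ of positive $Y_m$-degree and yields the slightly stronger fact that the specializations share no root in $\Qbar$; the paper's route avoids resultants altogether and exhibits explicit defining equations for the bad locus. The degree-zero degeneracy you flag at the end is glossed over in the paper's proof as well and is immaterial for the intended applications.
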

  \begin{proof} 
    Take $T_0$ to be the union of the two thin sets given by applying
    Hilbert's Irreducibility Theorem to $f$ and $g$ separately.  By
    construction, $f(x, y_1, \dots, y_{m-1}, Y)$ and
    $g(x, y_1, \dots, y_{m-1}, Y)$ are irreducible polynomials in $Y$
    for every $(x,y_1,\dots, y_{m-1})\in K^m\setminus T_0$, and it
    only remains to check the claim of relative primality.

If $\dg_{Y_m}(f) \neq \dg_{Y_m}(g)$, then this claim is trivial.  Therefore, write $d = \dg_{Y_m}(f) = \dg_{Y_m}(g)$, and consider $(x,\dots, y_{m-1})\in K^m\setminus T$.  Since the polynomials $f(x, y_1, \dots, y_{m-1}, Y)$ and $g(x, y_1, \dots, y_{m-1}, Y)$ are irreducible, the failure of relative primality implies that they are unit multiples of each other, i.e.,
$f(x, y_1, \dots, y_{m-1}, Y) = zg(x, y_1, \dots, y_{m-1}, Y)$ for some nonzero $z\in K$.  In particular, if we write 
$$
	f(X, Y_1,\dots, Y_m) = \sum_{i = 0}^{d} f_i(X, Y_1, \dots, Y_{m-1})Y_m^i,
$$
$$
	g(X, Y_1,\dots, Y_m) = \sum_{i = 0}^{d} g_i(X, Y_1, \dots, Y_{m-1})Y_m^i,
$$
where $f_i, g_i\in K[X, Y_1,\dots, Y_{m-1}]$ are polynomials,
then this condition is the same as 
$$
	f_i(x, y_1, \dots, y_{m-1}) = zg_i(x, y_1, \dots, y_{m-1})
$$ for all $0\leq i\leq d$.  Multiplying these conditions together, we get the equations
$$
	f_ig_j = zg_ig_j = g_if_j
$$
for $0\leq i, j \leq d$.  We will show that this system of equations holds only inside a thin set, which completes the proof.

We claim that the polynomial
$$
	f_i(X, Y_1, \dots, Y_{m-1})g_j (X, Y_1, \dots, Y_{m-1}) -  g_i(X, Y_1, \dots, Y_{m-1})f_j(X, Y_1, \dots, Y_{m-1})
$$
is nonzero for some choice of $i$ and $j$.  Indeed, if this were not the case, then we would find that
\begin{align*}
	f_i(X, Y_1, \dots, Y_{m-1}) g(X, Y_1, \dots, Y_{m})
		&= \sum_{j = 0}^d f_i(X, Y_1, \dots, Y_{m-1}) g_j(X, Y_1, \dots, Y_{m-1})Y_m^j\\
		&= \sum_{j = 0}^d g_i(X, Y_1, \dots, Y_{m-1}) f_j(X, Y_1, \dots, Y_{m-1})Y_m^j\\
		&=g_i(X, Y_1, \dots, Y_{m-1}) f(X, Y_1, \dots, Y_{m})
\end{align*}
for all $i$.  As $g$ and $f$ are irreducible and the only polynomials
on the left and right sides of the equation containing the variable
$Y_m$, we conclude that they are unit multiples of each other, which
contradicts the hypothesis of relative primality.

Therefore, let $T_1$ be the set of all $K$-rational points on the affine variety 
$$
	V_K(\{f_ig_j - g_if_j : 0\leq i < j \leq \dg_{Y_m}(f)\}).
$$  Since one of the polynomials in the defining set is nonzero, the affine variety is a proper closed variety, which implies that $T_1$ is a thin set by definition.  By construction, the set $T = T_0\cup T_1$ is the desired thin set.
\end{proof}

%%%%%%
%
%%%%%%
\section{Rank of a Formula}
\label{sec:rank}

The goal of this section is to define a notion of rank for existential formulas in the language of fields,
using degrees of polynomials and dimensions of varieties, as well as
the number of $\exists$-quantifiers used.  Certain formulas will have the same
rank, just as certain polynomials have the same degree. Crucially, the ranks are well-ordered.

\subsection{A useful well-ordering}

\begin{defn}
\label{defn:*order}
Let $(\L,<)$ be a linear order.  For a finite tuple $(a_0,\ldots,a_n)\in\L^{<\omega}$,
write $\avec^*$ for the tuple of the same $(n+1)$ elements (including repetitions) arranged
in $<$-descending order: $\avec^*=(a_{\alpha(0)},\ldots,a_{\alpha(n)})$ where $\alpha$
is a permutation and $a_{\alpha(i+1)}\leq a_{\alpha(i)}$ for all $i<n$.
Write $\avec =^* \bvec$ just if $\avec^*=\bvec^*$.

Then the $*$-order $(\L^*,<^*)$ is the lexicographic order $<^*$ (defined using $<$ on individual coordinates)
on the set $\L^*$ of $=^*$-equivalence classes in $\L^{<\omega}$.
To be clear:  if $\avec^*$ is a proper initial segment of $\bvec^*$,
then $\avec^* <^*\bvec^*$.
\end{defn}
Equivalently, one can view the elements of $\L^*$ as finite multisets of elements of $\L$,
with the elements of each multiset listed in $<$-nonincreasing order.
\begin{lemma}
\label{lemma:*wellorder}
If $(\L,<)$ is a well order, then so is $(\L^*,<^*)$.
\end{lemma}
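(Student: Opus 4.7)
The plan is to embed $(\L^*, <^*)$ in an order-preserving fashion into the ordinals, which immediately yields that it is a well-order. Since $(\L,<)$ is a well-order, it is order-isomorphic to a unique ordinal, and I identify $\L$ with that ordinal without loss of generality. I then define the map $\Phi\colon \L^* \to \mathrm{Ord}$ sending the nonincreasing tuple $(a_0, a_1, \ldots, a_{n-1})$ to the ordinal $\omega^{a_0} + \omega^{a_1} + \cdots + \omega^{a_{n-1}}$, computed via ordinal arithmetic (with the empty tuple mapping to $0$).

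The first observation is that this expression is precisely the Cantor normal form of the output, so $\Phi$ is injective by uniqueness of Cantor normal form. The core work is then to verify that $\Phi$ is order-preserving. Following the definition of $<^*$, I would split the check into two cases: (i) $\vec{a}^*$ is a proper initial segment of $\vec{b}^*$, and (ii) there is a least index $k$ where $\vec{a}^*$ and $\vec{b}^*$ differ, with $a_k < b_k$. Case (i) is essentially free, since $\Phi(\vec{b}^*)$ differs from $\Phi(\vec{a}^*)$ by the addition of strictly positive summands on the right. Case (ii) is the main piece: after using left-cancellativity of ordinal addition to strip off the common prefix $\omega^{a_0} + \cdots + \omega^{a_{k-1}}$, one must show that $\omega^{a_k} + \omega^{a_{k+1}} + \cdots + \omega^{a_{n-1}} < \omega^{b_k}$. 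This follows because each $a_j \leq a_k$ implies the left side is bounded above by $\omega^{a_k} \cdot (n-k) < \omega^{a_k + 1} \leq \omega^{b_k}$.

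The main obstacle is thus the delicate but routine case analysis in (ii); everything else is formal bookkeeping. Once $\Phi$ has been verified to be an order-preserving injection into $\mathrm{Ord}$, the well-ordering of the ordinals transfers back through $\Phi$ to show that $(\L^*, <^*)$ is a well-order, completing the proof.
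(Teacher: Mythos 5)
Your proof is correct, but it takes a genuinely different route from the paper. You identify $\L$ with an ordinal and embed $(\L^*,<^*)$ order-preservingly into the ordinals via $\Phi(a_0,\ldots,a_{n-1})=\omega^{a_0}+\cdots+\omega^{a_{n-1}}$, using uniqueness of Cantor normal form for injectivity and the estimate $\omega^{a_k}+\cdots+\omega^{a_{n-1}}\leq\omega^{a_k}\cdot(n-k)<\omega^{a_k+1}\leq\omega^{b_k}$, together with left monotonicity of ordinal addition, for order preservation; well-foundedness then pulls back through $\Phi$. The paper instead argues combinatorially by contradiction: assuming an infinite $<^*$-descending sequence, it takes the $<$-least $a\in\L$ that can head such a sequence and the least multiplicity $k$ with which $a$ can appear at the head, and derives a contradiction by ``chopping off'' the leading block $a^k$. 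Your approach buys a slick, conceptual proof (essentially identifying $\L^*$ with an initial segment of $\omega^{\lambda}$ under ordinal exponentiation) at the cost of importing ordinal arithmetic (CNF uniqueness, strict monotonicity of $\gamma+(\cdot)$); the paper's argument is longer in spirit but entirely elementary and self-contained, using nothing beyond the definition of a well-order. One tiny point you leave implicit is that $<^*$ is a linear order (any two distinct nonincreasing tuples are comparable, being either in the initial-segment relation or differing at a first index); the paper dismisses this as clear as well, and your embedding argument needs it only to conclude well-ordering from the absence of infinite descending chains.
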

\begin{proof}
Clearly $<^*$ is a linear order.  If it were not a well order, there would be a least $a\in\L$
such that some infinite $<^*$-descending sequence begins with an $\avec^*$ whose
greatest element is $a$.  Choose such an $\avec^*=(a^k,a_1,\ldots,a_n)$,
in nonincreasing order with $a_1<a$ after $a$ appears $k$ times,
with $k$ as small as possible (and allowing $n=0$).  Then the infinite
descending sequence beginning with this $\avec^*$ can only have finitely many
terms that begin with $a^k$, for if there were infinitely many, then by ``chopping off''
the $a^k$ from each term, we would get an infinite sequence contradicting the choice of $a$.
But then, immediately after the last term beginning with $a^k$ comes a term
beginning with $a^j$ for $j<k$, and this term also begins an infinite descending sequence in $\L^*$,
contradicting either the minimality of $k$ (if $j>0$) or the minimality of $a$ (if $j=0$).
\end{proof}

\subsection{Definition of rank}

We present an explicit way to put a well-ordering on the set of existential formulas with parameters in any given field.  This is done by associating a \emph{rank} to every existential formula.  

Every existential formula $\alpha(X)$ can be written in disjunctive normal form
$$\alpha(X) = \exists\Yvec(\alpha_1 \vee \alpha_2\vee\cdots\vee\alpha_n),$$
where each $\alpha_i(X, \Yvec)$ is a conjunction of equations and inequations.
Bringing the existential quantifiers inside the disjunctions and discarding
any unused quantifiers, every existential 
formula can be rewritten as
$$ ((\exists Y_1\cdots\exists Y_{m_1})\alpha_1) \vee \cdots\vee((\exists Y_1\cdots\exists Y_{m_n})\alpha_n),$$
where all variables $Y_1,\dots, Y_{m_i}$ appear in $\alpha_i$.  
One can also easily rearrange
any $\alpha_i(X,\Yvec)$ into a conjunction of the form 
$$f_1(X,\Yvec)=\cdots=f_k(X,\Yvec)=0~\&~
g(X,\Yvec)\neq 0.$$
Only one inequation $g\neq 0$ is needed, as several $g_i(X,\Yvec)$
could be multiplied together. It is allowed for $g$ to be the constant $1$.
We call an existential formula \emph{rankable} if it is given in the above format.
It is trivial to rearrange any existential formula into rankable format,
so in this paper every existential formula which appears is assumed to be rankable.

Before defining rank, we present a way to order tuples of polynomials.  Notice that this notion depends on a specific order for the variables.
\begin{defn} 
 For the variables
$X,Y_1,\ldots,Y_m$,  the \emph{multidegree} of a monomial $X^cY_1^{d_1}\cdots Y_m^{d_m}$  is
$(c,d_1,\ldots,d_m)$, and these $(m+1)$-tuples are ordered by the reverse
lexicographic order.  The \emph{multidegree $\mdeg{f}$ of a polynomial $f$} is the maximum
of the multidegrees of each monomial appearing (with nonzero coefficient) in it.
 \end{defn} 
 Observe that the linear order defined above on multidegrees is a well-ordering.

%Note that the multidegree of a polynomial depends on the number of variables $Y_1, \dots, Y_m$ in which one considers the polynomial, and one could artificially increase $m$ by considering extra variables which do not appear in the polynomial.  However, for our purposes, the number of variables will always be dictated by the quantifiers in a given formula, removing this ambiguity.

\begin{defn} 
\label{defn:rankable}
 A \emph{basic rankable formula} is an existential formula of the form
$$ \exists Y_1\cdots\exists Y_m~[f_1(X,Y_1,\ldots,Y_m)=\cdots=f_k(X,Y_1,\ldots,Y_m)=0~\&~
g(X,\Yvec)\neq 0],$$
and the \emph{rank} of such a formula is the triple
$$ \rk{\beta}=(m, e,
(\mdeg{f_1},\ldots,\mdeg{f_k})^*),$$
where the second component is the dimension $e$ of $V_{\Qbar}(\fvec) \cap D(g)$, as defined in Section \ref{sec:dim},
and the third component uses the $=^*$-classes of tuples of multidegrees, as in Definition
\ref{defn:*order}.
 \end{defn}
 
In this definition, we see that $V_{\Qbar}(\fvec)\cap D(g)$ is a subset of an ambient space of dimension $m + 1$. Therefore, the first coordinate of the definition of rank can be equivalently viewed as a measure of the dimension of this ambient space.  Additionally,
by Corollary \ref{cor:localization}, the base field does not matter in the definition of
the dimension $e$, so we will usually drop the $\Qbar$ from this notation.

We define an order $\prec$ on ranks of basic rankable formulas in forwards lexicographic order, meaning that
$$ (m, e, 
(d_1,\ldots,d_k)^*) \prec (m', e', 
(d'_1,\ldots,d'_{k'})^*)$$
if and only if one of the following holds:
\begin{itemize}
\item
$m<m'$, i.e., the first formula uses fewer $\exists$-quantifiers; or
\item
$m=m'$ and $e<e'$, so the first formula defines an open variety of lesser dimension than the second; or
\item
$m=m'$ and $e=e'$ and
$(d_1,\ldots,d_k)^* <^* (d'_1,\ldots,d'_{k'})^*$, so the first formula uses polynomials of lower multidegree.
\end{itemize}

The least possible rank of a
(satisfiable) basic rankable formula is $(0,0,(1)^*)$, which is the rank of the quantifier-free formula
$X=x$ for any specific value $x$:  here $m=0$, $k=1$ and the variety, which has
a single component whose dimension is $0$, is defined by $f_1=X-x=0$
whose multidegree (in the single variable $X$, since $m=0$) is simply $1$.
(The variety defined by $0=0$ has dimension $1$, so the formula $0=0$ has higher rank.)
%Sometimes it is convenient to regard the unsatisfiable formula $1= 0$ as having rank
%$(0,-\infty,(0)^*)$, here taking the constant polynomial $1$ as $f_1$.)

Let $\mathcal R$ denote the set of all possible ranks of basic rankable
formulas.  Then $(\mathcal R, \prec)$ is a well-ordering.
(The third component of $\prec$
is well-ordered
by Lemma \ref{lemma:*wellorder}.)
Let $(R^*,\prec^*)$ be the result of applying Definition \ref{defn:*order} to $(\mathcal R, \prec)$.

Observe that an existential formula is rankable if and only if it is the finite
disjunction of basic rankable formulas.

\begin{defn}
\label{def:rank}
If $\alpha = \vee_{i=1}^r \beta_i$ is a rankable formula, the \emph{rank} of 
$\alpha$ is defined to be
$$\rk\alpha = ( \rk{\beta_1},\ldots, \rk{\beta_n})^* \in \mathcal R^*$$
\end{defn}

The rankable formulas can then be compared using the
ordering $\prec^*$.
By Lemma \ref{lemma:*wellorder}, $(\mathcal R^*, \prec^*)$ is a well-order.

\section{Minimal formulas and hypersurfaces}
\label{sec:hyp}

The well-ordering of ranks means that every nonempty set of
existential formulas has an element of least rank. For example, if
there exists an existential formula that defines $\OO_L$ in $L$, then
there is an existential formula $\alpha$ that accomplishes this which
has least rank among all such formulas.  Such a formula can be
considered a minimal successful formula.  This motivates the following
general definition.

  \begin{defn}
  For a field $L \subseteq\Qbar$ and an existential formula $\alpha(X)$ with coefficients from 
  $L$, we say $\alpha$ is $L$-\emph{minimal} if $\alpha$ has least rank among 
  all existential formulas $\alpha'$ for which
  %$$L \models \forall x (\alpha(x) \iff \alpha'(x))$$
  \[\forall x (\alpha(x) \iff \alpha'(x))\]
  holds in $L$.
  \end{defn}

In order for the above to make sense, $\alpha'$ ranges only over those 
existential formulas which have parameters from $L$.
We will show that every $L$-minimal formula must take the form of a disjunction of 
formulas with two very simple formats: quantifier-free formulas, and formulas with only one equation and one inequation.

We will start by considering a general rankable formula, then minimize it as much as possible. First, we want to minimize the number of quantifiers, which is the first component of rank.  Clearly, we can eliminate the quantifier for any variable that does not appear in any polynomial of the formula.  The following simple lemma allows us also to remove any variables that appear in the inequation, but none of the equations.
 
 \begin{lemma} 
 \label{lem:forget} 
 Let $1\leq e < m$ and let $\delta(X)$ be the basic rankable existential formula 
 $$
 	\exists Y_1\cdots\exists Y_{m}~[f_1(X,Y_1,\dots,Y_e)=\dots = f_k(X, Y_1,\dots,Y_e)=0\neq g(X,Y_1,\dots,Y_m)],
$$
where $f_i\in F[X, Y_1, \dots, Y_e]$ and $g \in F[X,Y_1,\dots,Y_m]$ for some field $F$.

Then there are polynomials $g_1,\dots, g_r\in F[X, Y_1, \dots, Y_e]$
such that $\delta(X)$ is equivalent over $F$ to the disjunction of
formulas
$$
	\vee_{i = 1}^r \exists Y_1\cdots\exists Y_{e}~[f_1(X,\Yvec)=\dots = f_k(X, \Yvec)=0\neq g_i(X,\Yvec)].$$
  \end{lemma}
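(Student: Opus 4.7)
The plan is to separate the existential quantifiers for $Y_{e+1},\ldots,Y_m$ from those for $Y_1,\ldots,Y_e$, exploiting the fact that the variables $Y_{e+1},\ldots,Y_m$ appear only in the inequation $g\neq 0$ and not in any of the equations $f_j=0$. Concretely, I first rewrite $\delta(X)$ in the equivalent form
$$ \exists Y_1\cdots\exists Y_e\,\bigl[f_1=\cdots=f_k=0~\&~\exists Y_{e+1}\cdots\exists Y_m\,g(X,Y_1,\ldots,Y_m)\neq 0\bigr], $$
and then convert the inner existential subformula into a quantifier-free disjunction.

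To do this, expand $g$ as a polynomial in $Y_{e+1},\ldots,Y_m$ whose coefficients lie in $F[X,Y_1,\ldots,Y_e]$:
$$ g(X,Y_1,\ldots,Y_m)=\sum_{\alpha}g_\alpha(X,Y_1,\ldots,Y_e)\,Y_{e+1}^{\alpha_1}\cdots Y_m^{\alpha_{m-e}}, $$
and let $g_1,\ldots,g_r\in F[X,Y_1,\ldots,Y_e]$ be the finitely many nonzero coefficient polynomials $g_\alpha$ appearing above. The central claim is that, for any fixed tuple $(x,y_1,\ldots,y_e)\in F^{e+1}$, there exist $y_{e+1},\ldots,y_m\in F$ with $g(x,y_1,\ldots,y_m)\neq 0$ if and only if $g_i(x,y_1,\ldots,y_e)\neq 0$ for some $1\leq i\leq r$.

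The forward direction is immediate: if every $g_i(x,y_1,\ldots,y_e)=0$, then $g(x,y_1,\ldots,y_e,Y_{e+1},\ldots,Y_m)$ is identically the zero polynomial in $F[Y_{e+1},\ldots,Y_m]$, so $g$ vanishes at every choice of $y_{e+1},\ldots,y_m$. The reverse direction is the classical fact that a nonzero polynomial over an infinite field is not the zero function (proved by induction on the number of variables using that a nonzero single-variable polynomial of degree $d$ has at most $d$ roots). This requires $F$ to be infinite, which is automatic in the setting of the paper since every field $F\subseteq\overline{\mathbb{Q}}$ under consideration contains $\mathbb{Q}$.

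Given this equivalence, the inner $\exists Y_{e+1}\cdots\exists Y_m\,g\neq 0$ may be replaced verbatim by $\bigvee_{i=1}^{r} g_i(X,Y_1,\ldots,Y_e)\neq 0$. Distributing $\exists Y_1\cdots\exists Y_e$ and the conjunction $f_1=\cdots=f_k=0$ over this disjunction then produces exactly the finite disjunction of basic rankable formulas in the statement. The only real obstacle is the equivalence above, and it reduces to a standard fact about polynomials over infinite fields; everything else is routine quantifier manipulation.
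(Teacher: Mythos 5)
Your proof is correct and takes essentially the same route as the paper: the paper's argument also replaces the inner existential quantifiers by a disjunction over nonvanishing of the coefficient polynomials of $g$, merely peeling off one variable $Y_m$ at a time by induction rather than expanding $g$ in all of $Y_{e+1},\ldots,Y_m$ simultaneously. The key fact in both cases is that a nonzero polynomial over an infinite field (here any field containing $\mathbb{Q}$) has a point where it does not vanish.
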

   \begin{proof} 
     Write out
     $g = \sum_{i = 0}^{d_m} g_{i}(X, Y_1, \dots, Y_{m - 1}) Y_m^i$ as
     a polynomial in $Y_m$.  Notice that if
     $(x, y_1, \dots, y_{m-1})\in \Qbar^{m}$ is any tuple, then there
     is a $y_m\in \QQ$ such that $g(x, y_1, \dots, y_m) \neq0$ if and
     only if $g_i(x, y_1, \dots, y_{m-1}) \neq0$ for some
     $0\leq i\leq d_m$.  Therefore, we can remove the quantifier for
     $Y_m$ and instead use a disjunction where $g$ is replaced by
     $g_j$ for $0\leq j\leq d_m$ in each formula.  By induction, this
     completes the proof.
\end{proof}

   To continue minimizing the number of quantifiers, we can take a more geometric perspective.  A basic rankable formula  $\beta(X)$ with $m$ quantifiers
 $$
 	\exists Y_1\cdots\exists Y_{m}~[f_1(X,\Yvec)=\dots = f_k(X, \Yvec)=0\neq g(X,\Yvec)]
$$
 corresponds to the projection to the $X$-coordinate of the points on the variety $D(g)\cap V(f_1,\dots, f_k)$.  Minimizing the number of quantifiers $m$ is equivalent to minimizing the dimension $m + 1$ of the ambient space where the variety lives.  If $k$ is large, then we expect the dimension $e$  of the variety to be much smaller than $m + 1$, and we can consider this ``wasteful," as it uses more variables than necessary.  
The following proposition uses a basic result of algebraic geometry to show that, in a special case with integral affine varieties, we only need $m = e$ quantifiers and a single equation to describe all but a lower-dimensional closed subset.  To complete the section, we will the show that this is enough to deduce the result in general.

\begin{prop} 
\label{prop:birat_open}
Let $F\subseteq \Qbar$ be a field and $\pp = (f_1,\dots,f_k) \subseteq F[X, Y_1,\dots, Y_m]$ a prime ideal. Define $\beta(X)$ to be the formula
$$\beta(X) = \exists Y_1,\dots,Y_m[f_1(X,Y_1,\dots,Y_m)=\dots=f_k(X,Y_1,\dots,Y_m)=0]$$ 
and set $e = \dim{V_F(\pp)}$.
If $\beta(X)$ is satisfied by infinitely many values of $X$ in $\Qbar$ and $e \leq m-1$, then after possibly reordering indices, there are polynomials $h\in F[X, Y_1,\dots, Y_e]$ and $s\in F[X, Y_1, \dots, Y_{e-1}]$ with $h$ irreducible and $s\not\in \pp$ such that $\beta(X)$
is equivalent to $\gamma_1(X) \vee \gamma_2(X)$ over $F$, using the formulas
\begin{align*}
  \gamma_1(X)&:~~~~~\exists Y_1\cdots\exists Y_{e}~[h(X,\ldots,Y_e)=0\neq
                  s(X,\Yvec)], \\
 \gamma_2(X)&:~~~~~\exists Y_1\cdots\exists Y_m~[s(X, \ldots, Y_{m}) = f_1(X,\ldots,Y_{m})=\cdots=f_k(X,\ldots,Y_{m}) = 0 ].
\end{align*}
 \end{prop}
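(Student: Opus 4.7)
The plan is to exploit the integral, $e$-dimensional structure of $V_F(\pp)$ by changing coordinates so that the variety becomes, on a Zariski-open locus cut out by a polynomial in $X, Y_1, \ldots, Y_{e-1}$ alone, a finite cover of a hypersurface $V_F(h) \subseteq \AA^{e+1}$. The complement of that locus yields the lower-dimensional formula $\gamma_2$.

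To set up coordinates, I would first note that the hypothesis of infinitely many solutions forces $X$ to be transcendental over $F$ in the function field $K := \Frac(F[X, Y_1,\ldots, Y_m]/\pp)$: otherwise $X$ satisfies a nonzero polynomial over $F$ and takes only finitely many values. Since $\trdeg_F K = \dim{V_F(\pp)} = e$, this gives $e \geq 1$, and the exchange property extends $\{X\}$ to a transcendence basis of $K$ drawn from $\{X, Y_1,\ldots, Y_m\}$. After reordering indices, this basis is $\{X, Y_1,\ldots, Y_{e-1}\}$. Write $L_0 := F[X, Y_1, \ldots, Y_{e-1}]$ and $L := \Frac(L_0)$. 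Because $Y_e,\ldots, Y_m$ are then algebraic over $L$, clearing denominators in the minimal polynomial of $Y_e$ over $L$ and applying Gauss's lemma gives an irreducible $h \in L_0[Y_e] = F[X,Y_1,\ldots,Y_e]$ with $h \in \pp$.

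Next I would build $s$ by clearing denominators in algebraic relations for each remaining variable. For each $j \in \{e+1,\ldots, m\}$, choose a nonzero $q_j \in L_0[T]$ with $q_j(Y_j) = 0$ in $K$ and let $c_j \in L_0 \setminus\{0\}$ be its leading coefficient; let $c_e \in L_0 \setminus\{0\}$ be the leading coefficient of $h$ viewed as a polynomial in $Y_e$. Put $s := c_e \prod_{j = e+1}^m c_j$. Algebraic independence of $X, Y_1,\ldots, Y_{e-1}$ modulo $\pp$ yields $s \in L_0\setminus\pp$. Over $\{s \neq 0\}$, each of $Y_e, Y_{e+1},\ldots, Y_m$ satisfies a monic polynomial over $L_0[s^{-1}]$, so $A[s^{-1}]$ and $B[s^{-1}]$ are finite $L_0[s^{-1}]$-modules, where $A := F[X, Y_1,\ldots, Y_m]/\pp$ and $B := L_0[Y_e]/(h)$. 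The injection $B[s^{-1}] \hookrightarrow A[s^{-1}]$ is therefore finite and dominant, so the morphism $\Spec A[s^{-1}] \to \Spec B[s^{-1}]$ is surjective and in fact lifts every $F$-algebra homomorphism $\phi: B[s^{-1}] \to \Qbar$ to one $A[s^{-1}] \to \Qbar$: the fiber algebra $A[s^{-1}] \otimes_{B[s^{-1}],\phi} \Qbar$ is finite and nonzero over $\Qbar$, hence admits a quotient isomorphic to $\Qbar$.

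Finally, the equivalence $\beta(X) \Leftrightarrow \gamma_1(X) \vee \gamma_2(X)$ over $\Qbar$ unpacks easily. Any $\beta$-witness $(x, y_1,\ldots, y_m)$ satisfies $h(x, y_1,\ldots, y_e) = 0$ because $h \in \pp$, and yields $\gamma_1$ or $\gamma_2$ according as $s(x, y_1,\ldots, y_{e-1})$ is nonzero or zero. Conversely, $\gamma_1$-witnesses $(y_1,\ldots, y_e)$ give an $F$-algebra map $B[s^{-1}] \to \Qbar$ sending $X, Y_i$ to $x, y_i$; the lifting property above supplies $y_{e+1},\ldots, y_m \in \Qbar$ witnessing $\beta$, and $\gamma_2$ trivially implies $\beta$. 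The main obstacle is forcing $s$ into $L_0$ (rather than involving $Y_e$): this is what makes it essential to extract the algebraic relations $q_j$ for each $Y_j$ directly over $L_0$ rather than over $B$, and is possible only because we chose the transcendence basis to begin with $X$.
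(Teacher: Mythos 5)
There is a genuine gap: you prove the equivalence of $\beta$ with $\gamma_1\vee\gamma_2$ only with witnesses in $\Qbar$, but the proposition asserts equivalence \emph{over $F$}, i.e.\ with $x$ and the quantified witnesses ranging over $F$ — and that stronger statement is what the paper needs later, since the proposition is applied with $F$ equal to the field $L$ over which an $L$-minimal formula must define the same set. Your $h$ is the minimal polynomial of the single coordinate $Y_e$ over $L=\Frac(L_0)$, and the remaining coordinates are handled only through integrality; so the projection $V(\pp)\cap D(s)\to V(h)\cap D(s)$ is a finite cover of degree $[K:L(\theta_e)]$, where $\theta_e$ is the residue of $Y_e$, and this degree can exceed $1$. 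Lying over makes such a cover surjective on $\Qbar$-points (your fiber-algebra argument is fine for that), but not on $F$-points, so $\gamma_1$ may define a strictly larger subset of $F$ than $\beta$. Concretely, take $F=\QQ$, $\pp=(Y_1-X,\;Y_2^2-X)\subseteq\QQ[X,Y_1,Y_2]$, so $e=1$ and $\beta$ defines the rational squares; if the reordering puts $Y_1$ in the role of $Y_e$, your recipe gives $h=Y_1-X$ and $s=1$, and then $\gamma_1(x)$ holds for every $x\in\QQ$ while $\gamma_2$ is vacuous: the equivalence fails over $\QQ$ though it holds over $\Qbar$. Nothing in your construction forces a ``good'' choice of coordinate, and in general no single coordinate works (e.g.\ for the curve $Y_1^2=X$, $Y_2^2=X+1$ the function field is biquadratic over $\QQ(X)$, so neither $Y_1$ nor $Y_2$ generates it).

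The paper avoids exactly this by applying the primitive element theorem to the whole function field: $K=L_0(\theta)$ for a single $\theta$, $h$ is the denominator-cleared minimal polynomial of $\theta$ (an irreducible polynomial in $F[X,Y_1,\dots,Y_{e-1},Y]$), and one writes each residue of $Y_j$ ($e\le j\le m$) as a polynomial in $\theta$ over $L_0$ and $\theta$ as a polynomial in the $Y_j$ over $L_0$; taking $s$ to be the product of the denominators involved yields mutually inverse maps, i.e.\ an isomorphism $V_F(\pp)\cap D(s)\cong V_F(h)\cap D(s)$ defined over $F$ and preserving the $X$-coordinate. That isomorphism gives a bijection on points rational over $F$ (and over any extension), which is precisely the rationality your finite-cover argument loses. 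The rest of your setup — forcing $X$ into a transcendence basis using the infinitude of solutions, getting $s\in L_0\setminus\pp$, and splitting $V(\pp)=(V(\pp)\cap D(s))\cup V(\pp+(s))$ to produce $\gamma_2$ — matches the paper and is correct; the fix for the gap is to replace the minimal polynomial of $Y_e$ by that of a primitive element together with the explicit inverse expressions, which is the paper's proof.
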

 
 \begin{proof}
   Write $L = \Frac(F[X, Y_1,\dots, Y_m]/\pp)$. By Proposition
   \ref{prop:open}, we know that $e$ is equal to the transcendence
   degree of $L$ over $F$.  Since the images of
   $\{X, Y_1,\dots, Y_m\}$ generate $L$ over $F$, there is a
   transcendence basis consisting of a subset of these elements, and
   we can force $\bar X$ to be in this basis because $\bar X$ is not
   algebraic over $F$ \cite[Theorem VIII.1.1]{lang}.  Indeed, if
   $\bar X$ were algebraic over $F$, then it would be the root of a
   single-variable polynomial over $F$, and therefore $\beta(X)$ would
   only be solvable over $\Qbar$ by finitely many $X$, which is not
   the case by hypothesis.

   Reorder the variables so that
   $\{\bar X, \bar Y_1, \dots, \bar Y_{e-1}\}$ is a transcendence
   basis of $L$ over $F$. Write $L_0 = F(X, Y_1,\dots, Y_{e-1})$.
   Although a particular ordering of the variables is used when
   defining the multidegree component of rank in Definition
   \ref{defn:rankable}, we will produce lower-rank formulas purely in
   terms of quantifiers and dimension, and therefore the multidegree
   will not matter here. As $L$ is a finite separable extension of
   $L_0$, the primitive element theorem states that $L = L_0(\theta)$
   for a single element $\theta$.  Write $h\in L_0[Y]$ for the minimal
   polynomial of $\theta$. By clearing denominators if necessary, we
   can assume without loss of generality that
   $h\in F[X, Y_1,\dots, Y_{e-1}, Y]$ is an irreducible multivariable
   polynomial. Therefore, writing $\pp = (f_1,\dots, f_k)$, we have an
   isomorphism of fields:
$$
	L_0[Y_{e}, \dots, Y_m]/(f_1,\dots, f_k)
		\cong L
		\cong L_0[Y]/(h)
		 \cong \Frac(F[X, Y_1,\dots, Y_{e-1}, Y]/(h)) .
$$
Geometrically, this says that the integral affine variety $V_F(\pp)$ is birational to the hypersurface $V_F(h)$.
In fact, we can see that the two varieties contain isomorphic open sets, as follows.

Using the isomorphism of fields we can write
$Y_j = \sum_{\ell =0}^{N_j} c_{j,\ell} Y^\ell$ for each
%$e_\pp \leq j\leq m$
$j=e,\ldots,m$, and $Y = \sum_{\vec{a}} d_{\vec a}Y_{e%_\pp
}^{a_0}\dots
Y_{m}^{a_{m-e%_\pp
}}$, where $c_{j,\ell}$ and $d_{\vec a}$ are elements
of $L_0$, and in particular not contained in $\pp$ because $L_0$ is a
subfield of the function field of $V_F(\pp)$.  Let $s$ be the products
of all denominators appearing in these terms.  Then these equations
give an isomorphism of the open sets $V_F(\pp)\cap D(s)$ and
$V_F(h)\cap D(s)$; see \cite[Lemma 3.7]{Liu}.  Moreover, the
$X$-coordinate of rational points is unchanged by the isomorphism
because we included $X$ in the transcendence basis.  As
$V_F(\pp) = (V_F(\pp)\cap D(s)) \cup V_F(\pp + (s))$, this proves the
claim that the formula $\beta$ is equivalent over $F$ to the
disjunction stated above.
 \end{proof}
 
 Next we show that minimal formulas all have a very convenient structure. 
  
 \begin{prop} 
 \label{prop:form}
If $\alpha(X) = \vee_{i =1}^r \beta_i(X)$ is a disjunction of basic rankable formulas and is  $L$-minimal for some field $L \subseteq \Qbar$, then each $\beta_i(X)$ has one of the following forms:
 \begin{enumerate}[(i)]
	 \item The quantifier-free formula $X=z_0$ for a fixed $z_0\in L$.
	 \item The ``hypersurface formula" $\exists Y_1\dots \exists Y_e~[ f(X, Y_1, \dots, Y_e) = 0 \neq g(X, Y_1, \dots, Y_e)]$ for an irreducible $f\in L[X, Y_1, \dots, Y_e]$ and a polynomial $g \in L[X, Y_1, \dots, Y_e]$.
 \end{enumerate}
  \end{prop}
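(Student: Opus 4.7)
The plan is a contradiction argument via $\prec^*$-minimality. Assuming $\alpha = \bigvee_i \beta_i$ is $L$-minimal yet some $\beta_i$ fails to have form (i) or (ii), I will produce a disjunction of basic rankable formulas $L$-equivalent to $\beta_i$, each summand of rank strictly below $\rk{\beta_i}$. Substituting this disjunction for $\beta_i$ in $\alpha$ yields an $L$-equivalent formula of strictly smaller $\prec^*$-rank, because $\prec^*$ is lexicographic on the $\prec$-decreasing tuple of basic ranks, so swapping one entry for any finite collection of strictly smaller entries lex-decreases the tuple at the position the replaced entry occupied. This contradicts the $L$-minimality of $\alpha$.

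Write $\beta_i = \exists Y_1 \cdots \exists Y_m\,[f_1 = \cdots = f_k = 0 \neq g]$. Via Lemma~\ref{lem:forget} and variable elimination I may assume every $Y_j$ appears in some $f_\ell$, every variable of $g$ appears in some $f_\ell$, and $\beta_i$ is satisfiable (each preliminary reduction strictly drops rank, so minimality forces these normalizations). Let $Z = \{x \in \Qbar : \beta_i(x)\}$. If $Z$ is finite, then $\beta_i$ is $L$-equivalent to $\bigvee_{z \in Z \cap L}(X = z)$, a disjunction of form-(i) formulas, each of the minimum rank $(0,0,(1)^*)$---strictly dropping rank unless $\beta_i$ itself was a single $X = z_0$. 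If $Z$ is infinite and $I := (f_1,\ldots,f_k) \subseteq L[X,\vec{Y}]$ is prime with $e := \dim{V_L(I)}$, there are two subcases. When $e = m$, height-one primality in the UFD $L[X,\vec{Y}]$ gives $I = (f)$ with $f$ irreducible; since $f \mid f_\ell$ for every $\ell$, one has $\mdeg{f} \leq \mdeg{f_\ell}$, so rewriting $f_1 = \cdots = f_k = 0$ as $f = 0$ yields form (ii) of strictly smaller third rank-component unless $\beta_i$ was already of form (ii). When $e < m$, Proposition~\ref{prop:birat_open} (applied while tracking the inequation $g$ through the open subvariety $V_L(I) \cap D(g)$ via Corollary~\ref{cor:localization}) rewrites $\beta_i$ as $\gamma_1 \vee \gamma_2$, where $\gamma_1$ has form (ii) with only $e < m$ quantifiers (strict drop in the first rank-component) and $\gamma_2$ has $m$ quantifiers but lives on $V_L(I + (s))$ of dimension strictly less than $e$ by Proposition~\ref{prop:closed} (strict drop in the second rank-component).

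The remaining case---$Z$ infinite but $I$ not prime---is the main obstacle. Let $\pp_1, \ldots, \pp_s$ be the minimal primes over $I$. Because $L$ is a field and $L[X,\vec{Y}]$ is Noetherian (so $(\sqrt{I})^N \subseteq I$ for some $N$), one has $V_L(I)(L) = \bigcup_j V_L(\pp_j)(L)$, so $\beta_i$ is $L$-equivalent to a disjunction $\bigvee_j \widetilde\beta_j$, where each $\widetilde\beta_j$ is built from chosen generators of the prime $\pp_j$. Each $\widetilde\beta_j$ is now a prime-ideal basic rankable formula, so the preceding paragraph applies; the subtle point is rank control, since generators of $\pp_j$ could a priori have very large multidegrees. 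The saving observation is that the rank drops produced by Proposition~\ref{prop:birat_open} depend only on the dimension of $V_L(\pp_j)$, not on generator multidegrees: whenever $\dim{V_L(\pp_j)} < m$, the resulting form-(ii) disjunct uses fewer than $m$ quantifiers (first rank-component drops) and the remainder disjunct has variety-dimension strictly less than $e$ (second rank-component drops), independent of multidegrees. Only the codimension-one case $\dim{V_L(\pp_j)} = m$ (which forces $e = m$) requires a multidegree argument; there $\pp_j = (f_j')$ is principal with $f_j' \mid f_\ell$ for every $\ell$, giving $(\mdeg{f_j'})^* \prec^* (\mdeg{f_1},\ldots,\mdeg{f_k})^*$ provided $k \geq 2$ or $f_1$ is reducible---the excluded alternative, $k = 1$ with $f_1$ irreducible, would make $I = (f_1)$ prime and contradict the subcase. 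Hence each $\widetilde\beta_j$ is replaceable by a disjunction of form-(i)/form-(ii) basic rankable formulas of strictly smaller basic rank than $\beta_i$, completing the argument.
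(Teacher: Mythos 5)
Your proposal follows essentially the same route as the paper's proof: decompose the offending disjunct along the prime components of its ideal of equations, dispose of components with only finitely many solutions by quantifier-free disjuncts $X=z$, and for an infinite component of dimension $e<m$ invoke Proposition~\ref{prop:birat_open} to trade it for a hypersurface formula in $e$ variables plus a remainder supported on $V(\pp+(s))$, whose dimension drop (Proposition~\ref{prop:closed}) drives the rank bookkeeping; your observation that these drops are insensitive to the multidegrees of the generators of the $\pp_j$ is exactly the point the paper relies on. The one organizational difference is at codimension one: the paper first reduces to irreducible $f_i$ and notes $V(I)\subseteq V(f_1)$, so either $V(I)=V(f_1)$ or $\dim V(I)<m$, which means it never has to treat a component of dimension $m$ separately; you instead handle height-one primes directly as principal ideals in the UFD $L[X,\vec Y]$ with a divisibility-of-multidegree argument. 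Both work. (One small caveat in your version: if $g$ vanishes identically on a component with $\dim V(\pp_j)=m$, then $\dim V(\pp_j)=m$ does not force $e=m$, but that disjunct defines the empty set and can simply be discarded.)

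Two steps need repair. First, your finite case asserts that $\beta_i$ is $L$-equivalent to $\vee_{z\in Z\cap L}(X=z)$ with $Z=\{x\in\Qbar:\beta_i(x)\}$; this is false as stated, since $x\in Z\cap L$ only says $\beta_i(x)$ holds over $\Qbar$ (the witnesses $\vec y$ may lie outside $L$), so the disjunction can define a strictly larger subset of $L$. You must instead use the finite set $\{x\in L:\beta_i(x)\text{ holds over }L\}$, as the paper does. Second, and more substantively, Proposition~\ref{prop:birat_open} concerns the equations-only formula, and your parenthetical ``tracking the inequation $g$ \dots via Corollary~\ref{cor:localization}'' is an assertion rather than an argument; Corollary~\ref{cor:localization} is not the relevant tool. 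What is needed is to transport $g$ through the isomorphism $V(\pp)\cap D(s)\cong V(h)\cap D(s)$: the restriction of $g$ corresponds to a regular function $G/s^N$ with $G\in L[X,Y_1,\dots,Y_e]$, and the correct low-quantifier disjunct is $\exists Y_1\cdots\exists Y_e\,[\,h=0\neq G\cdot s\,]$, which indeed has $e<m$ quantifiers. Be aware that the shortcut of simply conjoining ``$\neq g(X,Y_1,\dots,Y_m)$'' with the extra variables freshly quantified---which is what the paper's formula \eqref{eq:1} does before applying Lemma~\ref{lem:forget}---does not in general give an equivalent formula (try $\pp=(Y_1-X,\,Y_2-X)$, $g=Y_1Y_2-1$, where $s=1$ and the shortcut formula is satisfied by $x=\pm 1$ while the original is not), so this is precisely the point where genuine detail is required. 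With these two repairs your argument goes through and is, in substance, the paper's proof.
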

  \begin{proof} 
 Let $\beta(X)$ be a fixed $\beta_i(X)$ which does not have the desired form. 
 Write $\beta$ in the form 
 $$
  \exists Y_1\cdots\exists Y_m~[f_1(X,\ldots,Y_{m})=\cdots=f_k(X,\ldots,Y_{m})=0\neq g(X,\Yvec)]
  $$
and consider the ideal $I = (f_1, \dots, f_k)$.   Define $e = \dim{V(I)\cap D(g)}$.
Without loss of generality, we can assume that each $f_i$ is irreducible.  Otherwise, if $f_1 = h_1h_2$ is a nontrivial factorization, then we could write $\beta$ as the disjunction of two formulas with $f_1$ replaced by $h_1$ and $h_2$, respectively, which have smaller multidegree.

Since $f_1$ is irreducible, $V_L(I)$ is a closed subset of the
integral affine variety $V_L(f_1)$ which has dimension
$\dim{V_L(f_1)} = m$ by Proposition \ref{prop:closed}.  In fact, we
see that either $V(f_1) = V(I)$, in which case we are done, or we have
$$
	e = \dim{V(I)\cap D(g)} \leq \dim{V(I)} < \dim{V(f_1)} = m.
$$  By assumption, we are in the latter case, and we will produce a set of formulas with parameters in $L$ which explicitly contradicts the minimality of $\alpha$.

The ideal $I$ has a primary decomposition
$I = \gq_1 \cap \dots \cap \gq_r$ where each $\gq_i$ is a primary
ideal associated to a prime ideal $\pp_i$. Indeed, the rational points
on $V(I)\cap D(g)$ are the same as the rational points on
$\cup_{i = 1}^r V(\pp_i)\cap D(g)$. Notice that the open set
$V(\pp_i)\cap D(g)$ might be empty for some $i$, but whenever it is
nonempty, $V(\pp_i)\cap D(g)$ has the same dimension as $V(\pp_i)$ by
Proposition \ref{prop:open}.

To summarize, we have shown that the formula $\beta(X)$ is equivalent to the disjunction $\vee_{i = 1}^r\delta_{\pp_i}(X)$ where each $\delta_{\pp_i}(X)$ is defined as a formula
$$\delta_{\pp_i}(X) = \exists Y_1,\dots, Y_m[p^i_1(X,\Yvec)=\dots=p^i_{n(i)}(X,\Yvec)=0\neq g(X,\Yvec)],$$
where $\pp_i = (p_1^i,\dots,p^i_{n(i)})$.
For each $i$, we will replace $\delta_{\pp_i}(X)$ itself with an
equivalent disjunction of basic rankable formulas, each of which 
has rank strictly smaller than $\beta$.  By definition, this contradicts 
the minimality of $\alpha$, and the proof will be done.

To this end, we analyze the primes $S = \{\pp_1, \dots, \pp_r\}$ and divide them accordingly.
Let $S_{\textrm{finite}}$ be the set of primes  $\pp\in S$ such that only finitely many elements of $L$ satisfy $\delta_{\pp}(X)$ in $F$, and let $S_\infty$ be all other primes of $S$.
 Partition $S_\infty= S_{\textrm{big}}\cup S_{\textrm{small}}$ where 
\begin{align*}
S_{\textrm{big}} &= \{\pp\in S_\infty \mid \dim{V(\pp)} = e \},\\
S_{\textrm{small}} &= \{\pp\in S_\infty \mid \dim{V(\pp)} < e \}.
\end{align*}

For any prime $\pp\in S_{\textrm{finite}}$, let
$\{z_{1}, \dots, z_{n}\}$ be the finite set of elements of $L$ which
satisfy $\delta_{\pp}(X)$ in $L$.  We may therefore replace
$\delta_\pp(X)$ with the disjunction of quantifier-free formulas
$\vee_{i = 1}^n (X - z_i)$.  Each of these quantifier-free formulas
consisting of a single-variable polynomial of degree 1 has the
smallest rank possible for a nontrivial basic rankable formula and
$\beta(X)$ has strictly larger rank.

For any $\pp\in S_{\textrm{small}}$, the formula $\delta_\pp(X)$ is already of smaller rank than $\beta$.  Indeed, the ambient space is the same, and the dimension is strictly smaller by definition.

For any $\pp\in S_{\textrm{big}}$, 
letting $\pp = (p_1,\dots,p_n)$,
we apply Proposition \ref{prop:birat_open} to see that 
$\exists \Yvec[p_1(X,\Yvec)=\dots=p_n(X,\Yvec)=0]$
 is equivalent to the disjunction of two formulas
\begin{align*}
  &\exists Y_1\cdots\exists Y_{e}~[f(X,\ldots,Y_e)=0\neq s(X,\Yvec)],\\
 & \exists Y_1\cdots\exists Y_m~[s(X, \ldots, Y_{m}) = p_1(X,\ldots,Y_{m})=\cdots=p_n(X,\ldots,Y_{m}) = 0 ],
\end{align*}
where $f\in L[X, Y_1,\dots, Y_e]$ is irreducible and $s\in L[X, Y_1,\dots, Y_{e-1}]$ is not contained in $\pp$.  Thus, $\delta_\pp(X)$ is equivalent to
 the disjunction of the following two formulas
\begin{align}
  &\exists Y_1\cdots\exists Y_{m}~[f(X,\ldots,Y_e)=0\neq g(X,Y_1,\dots,Y_m)s(X,Y_1,\dots,Y_{e-1})],\label{eq:1}\\
 & \exists Y_1\cdots\exists Y_m~[s(X, \ldots, Y_{e-1}) = p_1(X,\ldots,Y_{m})=\cdots=p_n(X,\ldots,Y_{m}) = 0 \neq g(X,\Yvec)].\label{eq:2}
\end{align}
By Lemma \ref{lem:forget}, we can replace the formula \eqref{eq:1} with a
disjunction of basic rankable formulas, each of which uses only $e$
quantifiers.  Since $e<m$, all these formulas have strictly smaller rank than
$\beta$.

On the other hand,   formula \eqref{eq:2} has $m$ quantifiers just like $\beta$, but 
we claim the associated variety has smaller dimension.  Indeed, we see that 
$$    \dim{V(\pp+(s)) \cap D(g)} \leq
	 \dim{V(\pp + (s))} 
		< \dim{V(\pp)}
		= \dim{V(\pp) \cap D(g)} = e,
$$
where the strict inequality follows
by Proposition \ref{prop:closed}.  Therefore this formula also has strictly
smaller rank than $\beta$.  This completes the proof.
\end{proof}

We can say more about the hypersurface formula appearing in the
previous result. First, we present a simple result on elements of the function field of an irreducible hypersurface.
  
  \begin{lemma} 
  \label{lem:lift_q}
  Let $F\subseteq \Qbar$ be a field and $f\in F[X, Y_1, \dots, Y_m]$ an irreducible polynomial whose degree in $Y_m$ is positive. If $\bar p/\bar q\in \Frac(F[X, Y_1, \dots, Y_m]/(f))$, then there are lifts of $p$ and $q$ to $F[X, Y_1, \dots, Y_m]$ such that $\dg_{Y_m}(q) < \dg_{Y_m}(f)$.
   \end{lemma}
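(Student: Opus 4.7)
The plan is to use Euclidean division of polynomials in the single variable $Y_m$, working over the fraction field $F(X, Y_1, \ldots, Y_{m-1})$, and then to clear denominators. View $f$ as a polynomial in $Y_m$ with coefficients in $F[X, Y_1, \ldots, Y_{m-1}]$; since $\dg_{Y_m}(f) > 0$, its leading coefficient (as a polynomial in $Y_m$) is a nonzero element of $F[X, Y_1, \ldots, Y_{m-1}]$, so division by $f$ makes sense in $F(X, Y_1, \ldots, Y_{m-1})[Y_m]$.

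Start with any lifts $p, q \in F[X, Y_1, \ldots, Y_m]$ of $\bar p, \bar q$, noting that $\bar q \neq 0$ forces $f \nmid q$. Apply Euclidean division in $F(X, Y_1, \ldots, Y_{m-1})[Y_m]$ to write $q = fh + r$ with $h, r \in F(X, Y_1, \ldots, Y_{m-1})[Y_m]$ and $\dg_{Y_m}(r) < \dg_{Y_m}(f)$. Choose a common denominator $d \in F[X, Y_1, \ldots, Y_{m-1}]$ so that $dh$ and $dr$ lie in $F[X, Y_1, \ldots, Y_m]$; since $d$ involves no $Y_m$, we still have $\dg_{Y_m}(dr) = \dg_{Y_m}(r) < \dg_{Y_m}(f)$.

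Reducing the identity $dq = f\cdot(dh) + dr$ modulo $f$ gives $\overline{dq} = \overline{dr}$ in $A = F[X, Y_1, \ldots, Y_m]/(f)$. Therefore, in the fraction field,
\[
\frac{\bar p}{\bar q} = \frac{\overline{dp}}{\overline{dq}} = \frac{\overline{dp}}{\overline{dr}},
\]
so the pair $(p', q') = (dp, dr)$ is the desired lift, provided $\overline{dr} \neq 0$. But $r \neq 0$ (else $f \mid q$, contradicting $\bar q \neq 0$), so $dr \neq 0$; and since $\dg_{Y_m}(dr) < \dg_{Y_m}(f)$, irreducibility of $f$ (or just the degree comparison) forces $f \nmid dr$, i.e.\ $\overline{dr} \neq 0$.

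There is essentially no obstacle here beyond being careful to clear denominators introduced by the Euclidean algorithm while keeping the $Y_m$-degree under control; the key observation is that a common denominator $d$ can always be taken in the subring $F[X, Y_1, \ldots, Y_{m-1}]$, and such a $d$ cannot raise the $Y_m$-degree of $r$.
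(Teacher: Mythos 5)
Your proof is correct and takes essentially the same route as the paper: both arguments reduce the $Y_m$-degree of the denominator by division with remainder by $f$ in the variable $Y_m$, adjusting by nonzero factors from $F[X,Y_1,\dots,Y_{m-1}]$, which are units modulo $(f)$ because $\dg_{Y_m}(f)>0$. The only difference is packaging — the paper iterates a pseudo-division step inside the polynomial ring (multiplying numerator and denominator by the leading coefficient of $f$ in $Y_m$ at each step), while you perform one Euclidean division over $F(X,Y_1,\dots,Y_{m-1})$ and then clear denominators — and these are interchangeable.
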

    \begin{proof} 
Write $f = \sum_{i = 0}^d b_i Y_m^i$ where $b_i \in F[X, Y_1,\dots, Y_{m-1}]$. 
Choose arbitrary lifts $p_0, q_0\in F[Y_0,\dots, Y_m]$ of $\bar p$ and $\bar q$.  If $\dg_{Y_m} q_0 < \dg_{Y_m} f$, then we are already done.  Otherwise, define $p_1 = b_dp_0$ and $q_1 = b_d q_0$, which define the same fraction in the function field because $b_d, q_0\not\in (f)$.   Then the leading coefficient of $q_1$ is divisible by $b_d$, so we write it as $h_1b_d$ for $h_1\in F[Y_0,\dots, Y_{m-1}]$. Define $q_2 = q_1 - h_1 Y_m^{(\dg_{Y_m} q_1) - d}f_1$, and notice that $\dg_{Y_m}q_2 < \dg_{Y_m}q_1$. Continuing in this way, the claim follows.
\end{proof}

\begin{prop}\label{prop:absolutely-irreducible}
Suppose $L\subseteq \Qbar$ is a field
and $\beta(X)$ is a formula with parameters from $L$ of the following form
$$\beta(X) = \exists Y_1 \dots \exists Y_e[f(X,Y_1,\dots,Y_e)=0 \neq g(X,Y_1,\dots,Y_e)]$$
Suppose $\beta(X)$ is $L$-minimal.
Then $f$ is absolutely irreducible.
\end{prop}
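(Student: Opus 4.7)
The argument is by contradiction: since $f$ is irreducible over $L$ by Proposition \ref{prop:form}, assume it is \emph{not} absolutely irreducible and construct a formula equivalent to $\beta$ over $L$ but of strictly smaller rank. The engine is Proposition \ref{prop:missinglemma}: because $f$ becomes reducible over some finite Galois extension $K/L$, the function field $E = \Frac(L[X, Y_1, \dots, Y_e]/(f))$ contains an element $z \in E \setminus L$ algebraic over $L$, with minimal polynomial $m(T) \in L[T]$ of some degree $d \geq 2$. I will use this extra algebraic datum to force all $L$-rational points of $V(f)$ into a proper closed subvariety, lowering the dimension component of the rank.

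\textbf{Key computation.} Write $z = \bar p / \bar q$ with $p, q \in L[X, Y_1, \dots, Y_e]$ and $q \notin (f)$. From $m(z) = 0$ in $E$, deduce the polynomial identity $p^d + a_{d-1} p^{d-1} q + \dots + a_0 q^d \in (f)$. Now take any $(x, y_1, \dots, y_e) \in L^{e+1}$ with $f(x, y_1, \dots, y_e) = 0$. If $q(x, y_1, \dots, y_e) \neq 0$, then $p(x, y_1, \dots, y_e)/q(x, y_1, \dots, y_e)$ is an element of $L$ satisfying the $L$-irreducible polynomial $m$ of degree $\geq 2$, which is impossible. Hence $q(x, y_1, \dots, y_e) = 0$: every $L$-rational point of $V(f)$ actually lies on $V(q)$ as well.

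\textbf{Conclusion and main obstacle.} It follows that $\beta(X)$ is equivalent over $L$ to
$$\beta'(X) = \exists Y_1 \cdots \exists Y_e\,\bigl[f(X, Y_1, \dots, Y_e) = 0 = q(X, Y_1, \dots, Y_e) \neq g(X, Y_1, \dots, Y_e)\bigr],$$
which uses the same $e$ quantifiers. Since $\beta$ is $L$-minimal, it must define a nonempty subset of $L$ (otherwise a simpler always-false formula such as $X = 0 = X - 1$ uses $0 < e$ quantifiers and has strictly smaller rank; the case $e = 0$ is trivial because absolute and $L$-irreducibility coincide for single-variable polynomials). So $V_L(f) \cap D(g)$ is nonempty and has dimension $e$ by Proposition \ref{prop:open}, since $V_L(f)$ is integral. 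Because $q \notin (f)$, Proposition \ref{prop:closed} gives $\dim{V_L(f, q)} \leq e - 1$, so the second component of $\rk{\beta'}$ is strictly less than that of $\rk{\beta}$, contradicting $L$-minimality. The main conceptual step is recognizing that Proposition \ref{prop:missinglemma} is exactly the right tool, translating reducibility of $f$ over $\Qbar$ into algebraic data intrinsic to $V(f)$ that obstructs $L$-rational points from occupying the full hypersurface; the subsequent dimension bookkeeping is then routine.
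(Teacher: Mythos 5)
Your proof is correct, and while its first half coincides with the paper's, its endgame takes a genuinely simpler route. Both arguments hinge on the same engine: Proposition \ref{prop:missinglemma} produces an element $z=\bar p/\bar q$ of the function field of $f$ that is algebraic of degree at least $2$ over the base field and satisfies a polynomial with no root in $L$, whence every $L$-rational point of $V(f)$ must kill the denominator $q$. Where you diverge is in converting this into a rank reduction. The paper invokes Lemma \ref{lem:lift_q} to arrange $\dg_{Y_e}(q)<\dg_{Y_e}(f)$, runs one step of Euclidean division to get a remainder $r$, and splits into two formulas (one with $q=r=0\neq gc_d$, one with $f=c_d=0\neq g$), winning on the multidegree component in the first case and the dimension component in the second. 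You instead simply conjoin $q=0$ to $\beta$ and win on dimension alone: $f$ is irreducible over $L$, so $V_L(f)$ is integral of dimension $e$; $L$-minimality forces $\beta$ to be satisfiable over $L$ (else a quantifier-free contradictory formula is cheaper), so $\dim{V(f)\cap D(g)}=e$ by Proposition \ref{prop:open} and Corollary \ref{cor:localization}; and $q\notin(f)$ gives $\dim{V(f,q)\cap D(g)}\leq e-1$ by Propositions \ref{prop:closed} and \ref{prop:base-ext}. Since the rank order is lexicographic with dimension before multidegree, the worse third component of your $\beta'$ is irrelevant and the contradiction is immediate. This bypasses Lemma \ref{lem:lift_q} and the Euclidean bookkeeping entirely, and for this proposition the paper's finer analysis does not buy anything you need.

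Two small points to tidy. First, you apply Proposition \ref{prop:missinglemma} with base field $L$, whereas the paper states it only for number fields; either note that its proof uses nothing beyond $K/F$ finite Galois in characteristic zero (so it holds verbatim for $F=L$), or descend as the paper does to the number field $F'=K\cap L$ --- in which case you must also argue, via $K/F'$ Galois and $K\cap L=F'$, that the minimal polynomial of $z$ over $F'$ has no root in $L$; that is precisely the role of the paper's auxiliary polynomial $h$. You should also record why a suitable $K$ exists: adjoin to the base the finitely many coefficients of the $\Qbar$-factors of $f$ and pass to a Galois closure. Second, your parenthetical about $e=0$ is wrong as stated: for one-variable polynomials, irreducibility over $L$ does not imply absolute irreducibility (consider $X^2+1$ over $\QQ$); either exclude the degenerate case $e=0$ (the proposition clearly intends $e\geq 1$) or note that an $L$-minimal quantifier-free formula of this shape must have $f$ linear, since otherwise it could be replaced by a disjunction of formulas $X=z_i$ of smaller rank. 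Neither point affects the main argument.
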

\begin{proof}
First, it is clear that $f$ is irreducible in $L$; if it were reducible then $\beta$ could be 
equivalently expressed as the disjunction of two hypersurface formulas of 
strictly smaller rank.

Suppose for contradiction that $f$ is not absolutely irreducible.  We will use this 
fact to define $\{x \in L : \beta(x) \text{ holds in } L\}$ by a smaller rank formula using coefficients
from $L$.

Let $F \subseteq L$ 
be a number field containing all the coefficients which appear anywhere in $\beta$.
Let $K$ be a finite Galois extension of $F$ containing the coefficients of the absolutely irreducible factors of $f$ over $\Qbar$, and let $F' = K \cap L$.
Then $F \subseteq F' \subseteq K$, and $K$ is Galois over $F'$ because 
it was Galois over $F$. We remark that $F'$ is a subfield of $L$, and therefore $f$ is irreducible over $F'$.

For each of the finitely many number fields $E$ with $F' \subset E \subseteq K$, 
let $p_E \in  F'[Z]$ 
be a minimal polynomial for a primitive generator of $E$ over $F'$.  
Since $K$ is Galois over $F'$, none of these finitely many $p_E$ have a root in $L$.
Let $ h = \prod_{E : F' \subset E \subseteq K} p_E$.

We claim that $L$
has a lower-ranked formula $\phi$ with coefficients from $F'$ and
 with the property that 
for all $x \in L$,  
$\phi(x)$ holds over $L$ if and only if $\beta(x)$ does.

Let $M$ be the function field of $f$ over $F'$.  By Proposition \ref{prop:missinglemma},
$M$ therefore contains some element $z_0\in K \setminus F'$.
Moreover, $F'(z_0)$ is a subfield of $K$ which strictly contains $F'$.
So $F'(z_0)$ contains a root $z$  of $h$.

As an element of $M$, the root $z$ will be of the
form $\frac{p(X,\Yvec)+(f)}{q(X,\Yvec)+(f)}$, with $p,q\in F'[X,\Yvec]$.
We may view $p$ and $q$ as polynomials
 $p,q\in F'[X,Y_1,\ldots,Y_e]$, modulo the ideal $(f)$.
These polynomials will satisfy
$$ h\left(\frac{p(x,\yvec)}{q(x,\yvec)}\right)=0$$
whenever $(x,\yvec)$ is a solution to $f=0$ and $q(x,\yvec)\neq 0$.
 Therefore, 
 every solution
$(x,\yvec)\in L^{m+1}$ to $f=0$ has $q(x,\yvec)=0$.

By Lemma \ref{lem:lift_q}, we may choose our specific $q\in F'[X,\Yvec]$
so that $\dg_{Y_{e}}(q)<\dg_{Y_{e}}(f)$.  
Notice that $q\notin (f)$ because
$q+(f)$ is the denominator of an element of the function field, hence nonzero.
Below we will consider $q$ as a polynomial of degree $d$ in $Y_e$, writing
$q=\sum_{i\leq d} c_iY_e^i$ with all $c_i\in F[X,Y_1,\ldots,Y_{e-1}]$.
Without loss of generality, the leading nonzero coefficient $c_d$ does not lie in $(f)$.
If it happens that $Y_e$ does not appear in $q$, then $d=0$ and $c_0=q$.

But now we can use these facts to give a lower-ranked disjunction 
$\phi(X) = \gamma_0(X)\vee\gamma_1(X)$
which is equivalent to $\beta(X)$ in $L$.
Since $Y_e$ has lower
degree in $q$ than in $f$, the trick is to use the Euclidean algorithm here, 
using the leading term in the expansion $f=\sum_{i=0}^{d_1} Y_e^i \cdot b_i(X,Y_1,\ldots,Y_{e-1})$
and writing
$$  r(X,\Yvec) = c_d(X,\ldots,Y_{e-1})\cdot f(X,\Yvec) - 
b_{d_1}(X,Y_1,\ldots,Y_{e-1})\cdot Y_e^{d_1-d} \cdot q(X,\Yvec)$$
as a remainder with $\dg_{Y_e}(r)<\dg_{Y_e}(q)$.  Recall that the polynomial $c_d$
is the coefficient of $Y_e^d$ in $q$, hence does not involve $Y_e$.
Observe also that all coefficients of $r$ are in $F'$.

We claim that
in this situation, a tuple $(x,\yvec)\in L^{m+1}$ is a point on 
$V(f) \cap D(g)$ if and only if one of the following  conditions holds:
\begin{equation}\label{eqn:gamma0}
q(x,\yvec)=r(x,\yvec)=0\neq g(x,\yvec)\cdot c_d(x,y_1,\ldots,y_{e-1})
\end{equation}
or
\begin{equation}\label{eqn:gamma1}
f(x,\yvec)=c_d(x,y_1,\ldots,y_{e-1})=0\neq g(x,\yvec).
\end{equation}

To see the claim,
first let $(x,\yvec)$ be a point on $V(f) \cap D(g)$.  As shown above,
we must have $q(x,\yvec)=0$.  But the Euclidean equation shows that
$r(x,\yvec)=0$ as well, so the tuple satisfies one of the conditions, according to whether
$c_d(x,y_1,\ldots,y_{m-1})=0$ or not.
The converse of the claim follows by applying the Euclidean equation to the first condition, and the latter condition directly defines a subset of $V(f)\cap D(g)$.

The formulas $\gamma_0(X)$ and $\gamma_1(X)$ that we promised above are simply the
conditions in (\ref{eqn:gamma0}) and (\ref{eqn:gamma1}), 
each prefixed by $\exists Y_1\cdots\exists Y_e$.
Clearly these formulas have the same number of quantifiers as $\beta$.  The first formula corresponds to a subset $V(r, q)\cap D(gc_d)$ of $V(f)\cap D(g)$ because  $r + b_{d_1}Y_e^{d_1 - d}q = c_df$.  Hence the dimension of the subset cannot exceed the dimension of $V(f)\cap D(g)$.  However, $r$ and $q$ were constructed to have lower multidegree than $f$, so  $\gamma_0$ has strictly smaller rank than $\beta$.

On the other hand, the affine variety over $F'$ defined by the latter formula is a proper closed  subset of $V(f)$, hence 
$$
	\dim{V(f, c_d))\cap D(g)} 
		\leq	\dim{V(f, c_d))}  
		< \dim{V(f)}   
		= \dim{V(f))\cap D(g)} 
$$
showing that $\gamma_1$ has strictly smaller rank than $\beta$.
\end{proof}

Putting these results together yields the following normal form theorem 
for existential formulas in algebraic extensions of $\QQ$.

\begin{defn}
An \emph{absolutely irreducible hypersurface formula} is a formula of the form 
 $$\exists Y_1\dots \exists Y_e~[ f(X, Y_1, \dots, Y_e) = 0 \neq g(X, Y_1, \dots, Y_e)]$$ 
 for polynomials $f,g \in \Qbar [X, Y_1, \dots, Y_e]$,
	 where $f$ is absolutely irreducible and does not divide $g$.

\end{defn}
  
  \begin{thm}[Normal Form for Existential Definitions]\label{thm:normal-form}
 For any field $L \subseteq \Qbar$, if $A \subseteq L$ is existentially definable in $L$,
then $A$ is definable in $L$ by a formula of the form 
$$\alpha(X) = \vee_{i =1}^r \beta_i(X),$$ where each $\beta_i(X)$ has one of the following forms:
 \begin{enumerate}[(i)]
	 \item The quantifier-free formula $X=z_0$ for a fixed $z_0\in L$.
	 \item An absolutely irreducible hypersurface formula with coefficients from $L$ which is satisfied by infinitely many $x \in L$.
\end{enumerate}
  \end{thm}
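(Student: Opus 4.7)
The plan is to extract the normal form from an $L$-minimal defining formula, invoking the previous propositions to control the shape of each disjunct. Start by picking an existential formula $\alpha$ defining $A$ in $L$; such a formula exists by hypothesis, and since $(\mathcal R^*,\prec^*)$ is a well-order by Lemma \ref{lemma:*wellorder}, we may replace $\alpha$ by one of minimal rank among all existential formulas defining $A$ in $L$. Write $\alpha(X) = \bigvee_{i=1}^r \beta_i(X)$ as a disjunction of basic rankable formulas.

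Apply Proposition \ref{prop:form} to $\alpha$. Since $\alpha$ is $L$-minimal, each $\beta_i$ is either a quantifier-free formula $X = z_0$ (case (i)) or a hypersurface formula with an irreducible polynomial $f \in L[X,Y_1,\ldots,Y_e]$. For each hypersurface disjunct $\beta_i$, we want to upgrade ``irreducible'' to ``absolutely irreducible,'' and to rule out the degenerate cases $f\mid g$ and ``only finitely many $x\in L$ satisfy $\beta_i$.'' The key observation is that if any single $\beta_i$ could be replaced within $\alpha$ by a formula $\beta_i'$ (with coefficients from $L$) which has strictly smaller rank and which defines the same subset $\{x\in L: \beta_i(x)\}$ of $L$, then the overall formula $\bigvee_{j\ne i}\beta_j \vee \beta_i'$ still defines $A$ in $L$ but has strictly smaller $\prec^*$-rank than $\alpha$, contradicting the minimality of $\alpha$. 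So no such replacement exists for any $\beta_i$.

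Using this principle, the three cleanup conditions follow directly. For absolute irreducibility, the hypothesis of Proposition \ref{prop:absolutely-irreducible} is exactly that $\beta_i$ be $L$-minimal as a definition of its own defined set in $L$, which is precisely what the observation above supplies; hence $f$ is absolutely irreducible. If $f\mid g$, then $f=0$ forces $g=0$, so $\beta_i$ is never satisfied and can simply be deleted from the disjunction, yielding a formula of strictly smaller rank that still defines $A$ -- impossible. If only finitely many $x_1,\ldots,x_n \in L$ satisfy $\beta_i$, then $\beta_i$ is equivalent over $L$ to the quantifier-free disjunction $(X=x_1)\vee\cdots\vee(X=x_n)$, each of whose terms has rank $(0,0,(1)^*)$, strictly smaller than the rank of any hypersurface formula (which has $m\ge 1$ quantifiers); again this contradicts minimality.

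Assembling these three points, every hypersurface disjunct in the chosen minimal $\alpha$ is an absolutely irreducible hypersurface formula with $f\nmid g$ satisfied by infinitely many $x\in L$, i.e.\ of form (ii), while the remaining disjuncts are of form (i). The main obstacle in this plan is verifying the ``local-to-global'' minimality transfer -- that a rank-reducing modification of a single disjunct $\beta_i$ yields a rank-reducing modification of the whole $\alpha$ -- which follows immediately from the way $\prec^*$ is built out of $\prec$ via Definition \ref{defn:*order}, since replacing one entry of a multiset with a strictly $\prec$-smaller entry produces a strictly $\prec^*$-smaller multiset.
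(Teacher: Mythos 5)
Your proposal is correct and takes essentially the same route as the paper: pick an $L$-minimal defining formula, apply Propositions \ref{prop:form} and \ref{prop:absolutely-irreducible}, and eliminate the $f\mid g$ and finitely-satisfied hypersurface disjuncts by rank-lowering replacements with quantifier-free formulas. The only difference is that you make explicit the local-to-global step (each disjunct of an $L$-minimal disjunction is itself $L$-minimal, via the multiset ordering), which the paper leaves implicit when invoking Proposition \ref{prop:absolutely-irreducible}.
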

 
  \begin{proof}
Apply Propositions \ref{prop:form} and \ref{prop:absolutely-irreducible}, 
plus the following two observations.  If $f$ divides $g$ in any of the hypersurface 
formulas, then that formula is unsatisfiable.  
If a hypersurface formula is satisfied by at most finitely many $x \in L$
(including if it is unsatisfiable), then
it could be replaced by a (possibly empty) 
disjunction of formulas of the form $X=z_0$, lowering 
the rank.
  \end{proof}

  \section{The meagerness of definability}
  \label{sec:main_thm}
 
 Recall that by identifying a subset of $\Qbar$ with its characteristic function,
 we can consider the set $\Subfields = \{L \subseteq \Qbar : L \text{ is a field}\}$
 as a subset of $2^{\Qbar}$, from which it inherits the product topology.  A basis for the 
 topology is given by the sets
 $$U_{\vec a, \vec b} = \{L \in \Subfields : a_1,\dots a_n \in L \text{ and } b_1,\dots,b_k \not\in L\}$$
 for any finite sequences of elements $\vec a, \vec b$ from $\Qbar$.
 If $\vec b$ is empty, we write simply $U_{\vec a}$.

Recall that \emph{Cantor space}, denoted $2^\omega$, is the set of infinite 
binary sequences with the product topology. 

\begin{prop}\label{prop:homeo}
The space $\Subfields$ is homeomorphic to Cantor space.
\end{prop}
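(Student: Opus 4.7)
The plan is to apply Brouwer's classical characterization of Cantor space as the unique (up to homeomorphism) nonempty compact metrizable totally disconnected perfect topological space. Since $\Qbar$ is countably infinite, the ambient space $2^{\Qbar}$ with the product topology is itself homeomorphic to Cantor space, so it suffices to verify that $\Subfields$ is a nonempty closed perfect subset of $2^{\Qbar}$. Nonemptiness is immediate because $\QQ \in \Subfields$. For closedness, each field axiom imposes a clopen constraint depending on only finitely many coordinates: for every triple $(a,b,a+b) \in \Qbar^3$, the condition ``$a \in L$ and $b \in L$ implies $a+b \in L$'' depends only on three coordinates and is therefore clopen; analogous clopen conditions encode closure under multiplication, additive and multiplicative inverses, and the presence of $0$ and $1$. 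The intersection of this countable family of clopen sets is precisely $\Subfields$, so $\Subfields$ is closed in $2^{\Qbar}$ and inherits compactness, metrizability, and total disconnectedness.

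The main work is showing that $\Subfields$ has no isolated points. Given $L \in \Subfields$ and a basic open $U_{\vec a, \vec b}$ containing $L$, the goal is to produce a subfield $L' \neq L$ with $L' \in U_{\vec a, \vec b}$. Let $F = \QQ(\vec a)$, a number field contained in $L$, and note that no $b_i$ lies in $F$ since $F \subseteq L$ and no $b_i$ lies in $L$. If $F \subsetneq L$, then $L' := F$ is already a subfield in $U_{\vec a, \vec b}$ distinct from $L$, and we are done. Otherwise $L = F$ is itself a number field, and we must construct a proper extension. Choose a prime $\ell$ strictly greater than all the finite degrees $[L(b_i) : L]$, and choose a rational prime $p$ that is not an $\ell$-th power in $L$. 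Such a $p$ exists because only finitely many primes can be $\ell$-th powers in a fixed number field: otherwise the compositum $\QQ(\sqrt[\ell]{p_1}, \sqrt[\ell]{p_2}, \dots)$ would lie in $L$, but elementary Kummer theory shows this compositum is an infinite extension of $\QQ$, contradicting $[L:\QQ] < \infty$. By Capelli's criterion, $X^\ell - p$ is then irreducible over $L$, so $c := \sqrt[\ell]{p}$ generates an extension $L(c)$ with $[L(c):L] = \ell$ prime. For any $b_i$, the tower $L \subseteq L(b_i) \subseteq L(c)$ would force $[L(b_i):L]$ to equal $1$ or $\ell$; both possibilities are ruled out by hypothesis, so no $b_i$ lies in $L(c)$. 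Hence $L' := L(c) \in U_{\vec a, \vec b}$ is distinct from $L$, as required.

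The main obstacle is the perfectness step, specifically the case $L = \QQ(\vec a)$, where no proper subfield of $L$ contains all of $\vec a$ and so a genuine field extension must be constructed that still avoids the finitely many forbidden values $b_i$. The key technique is to pass to an extension of prime degree $\ell$ exceeding every $[L(b_i):L]$: this collapses the subfield lattice of $L(c)/L$ to the two endpoints, automatically preventing any $b_i$ from entering the new field.
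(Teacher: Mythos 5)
Your proof is correct and follows essentially the same route as the paper: realize $\Subfields$ as a closed subset of the Cantor-homeomorphic space $2^{\Qbar}$ and check it has no isolated points (i.e., verify Brouwer's characterization). The only difference is one of detail: where the paper simply asserts it is clear that every nonempty basic open set $U_{\vec a,\vec b}$ splits into two nonempty pieces, you give an explicit field-theoretic verification (passing to $L(\sqrt[\ell]{p})$ with $\ell$ a prime exceeding every $[L(b_i):L]$), which correctly fills in that step.
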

\begin{proof}
Since $\Subfields$ is a closed subset of the Cantor-homeomorphic 
space $2^{\Qbar}$, it suffices to show that 
$\Subfields$ has no isolated points.  But it is clear that whenever 
$U_{\vec a, \vec b}$ is non-empty, there is $c \in \Qbar$ such that 
both $U_{(\vec a, c), \vec b}$ and $U_{\vec a, (\vec b, c)}$ are nonempty.
\end{proof}

   The upshot of Proposition \ref{prop:homeo} is a structure on the set $\Subfields$ which allows us to describe when a set is ``large" or ``small" in terms of topology.  In particular, we enlist the notions of meager sets and the property of Baire.
   \begin{defn} 
   A subset of a topological space is called \emph{nowhere dense} if its closure has empty interior, and \emph{meager} if it is the countable union of nowhere dense sets.  A topological space is \emph{Baire}\footnote{Some authors use the terminology \emph{Baire space} to refer to topological spaces with this property. However, we reserve the name Baire space for the particular topological space $\omega^\omega$, which is discussed in related papers, such as \cite{M19}, although we will not use it in this paper.} if every non-empty open subset is non-meager.
    \end{defn}
   
    Cantor space $2^\omega$ is Baire, and by Proposition \ref{prop:homeo} the same is true for $\Subfields$, which allows us to consider meager sets to be small.

\begin{defn}\label{defn:s-beta}
For any $Z \subseteq \Q$, and formula
$\beta(X)$ with coefficients $\vec a$ from $\Qbar$, we define $S_\beta(Z)$
to be the set of algebraic
fields in which $\beta$ defines a set disjoint from $Z$:
$$ S_\beta(Z)=\set{L\in U_{\vec a}}{ \set{ x \in L }{ \beta(x) \text{ holds over } L} \cap  Z= \emptyset}.$$
\end{defn}

\begin{defn}
For any $Z \subseteq \Q$, let $S(Z)$ denote the set
$$S(Z) = \bigcup_{\beta} S_\beta(Z)$$
where $\beta$ ranges over absolutely irreducible hypersurface formulas 
with coefficients from $\Qbar$.
\end{defn}

\begin{prop}
\label{prop:induction}
Let $Z$ be a subset of $\Q$ that is not thin in $\Q$.
Then $S(Z)$ is meager.  In particular, for every absolutely irreducible hypersurface formula 
$$\beta(X):~~\exists \Yvec [f(X,\Yvec) = 0 \neq g(X,\Yvec)]$$ 
with coefficients $\vec a$ from $\Qbar$,
the set $S_\beta(Z)$
is nowhere dense.
\end{prop}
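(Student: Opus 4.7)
The plan is to establish nowhere-density by taking an arbitrary nonempty basic open set $V = U_{\vec a, \vec c, \vec d} \subseteq U_{\vec a}$ and constructing a nonempty basic open subset $V' \subseteq V$ disjoint from $S_\beta(Z)$. The strategy is to produce a single $x_0 \in \Q \setminus Z$ together with a tuple $\yvec_0 \in \Qbar^e$ witnessing $f(x_0,\yvec_0) = 0 \neq g(x_0,\yvec_0)$, so that declaring $V' = U_{(\vec a, \vec c, \yvec_0), \vec d}$ forces every $L \in V'$ to satisfy $\beta(x_0)$ and hence to fail to lie in $S_\beta(Z)$. Set $F = \Q(\vec a, \vec c)$; this is a number field, and because $V$ is nonempty no $d_i$ lies in $F$. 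Let $K$ be a finite Galois extension of $F$ containing every $d_i$.

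Next, I would use Hilbert's Irreducibility Theorem to locate the desired $x_0$. Since $f$ is absolutely irreducible, it remains irreducible over $K$. Factor $g$ into irreducibles over $K$ as $g = g_1^{n_1}\cdots g_r^{n_r}$; because $f \nmid g$ in $\Qbar[X,\Yvec]$ and $f$ is absolutely irreducible, $f$ is coprime to each $g_j$ in $K[X,\Yvec]$. Applying Proposition \ref{prop:hit2} over $K$ to each pair $(f,g_j)$ produces thin sets whose union $T \subseteq K^e$ has the property that, for $(x, y_1, \ldots, y_{e-1}) \in K^e \setminus T$, the polynomial $f(x, y_1, \ldots, y_{e-1}, Y_e)$ is irreducible over $K$ and shares no root with any $g_j(x,\ldots,Y_e)$; equivalently $f(x,\ldots,Y_e) \nmid g(x,\ldots,Y_e)$. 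By Proposition \ref{prop:fin-int} the set $T \cap \Q^e$ is thin in $\Q^e$, while Lemma \ref{lem:thin_prod} ensures that $(\Q \setminus Z) \times \Q^{e-1}$ is not thin in $\Q^e$. So I can choose $(x_0, y_1^0, \ldots, y_{e-1}^0) \in (\Q \setminus Z) \times \Q^{e-1}$ outside $T$ and then select a root $y_e^0 \in \Qbar$ of the specialized $f$ which is not a root of the specialized $g$.

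Setting $\yvec_0 = (y_1^0, \ldots, y_{e-1}^0, y_e^0)$, I would verify the crucial disjointness $F(\yvec_0) \cap K = F$ by a diamond argument: since $f(x_0, y_1^0, \ldots, y_{e-1}^0, Y_e)$ is irreducible over both $F$ and $K$ of the same degree, $[K(y_e^0):K] = [F(y_e^0):F]$, and combining this with $K/F$ being Galois forces $F(\yvec_0) \cap K = F$, so no $d_i$ lies in $F(\yvec_0)$. Then $V' = U_{(\vec a, \vec c, \yvec_0), \vec d}$ contains the field $F(\yvec_0)$, is contained in $V$, and every $L \in V'$ witnesses $\beta(x_0)$ via $\yvec_0 \in L$; since $x_0 \in \Q \setminus Z$, we conclude $V' \cap S_\beta(Z) = \emptyset$. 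The main obstacle is coordinating three requirements at once: (i) preservation of irreducibility of $f$ upon specialization over the \emph{enlarged} field $K$ rather than merely over $F$ (needed so that the resulting field is disjoint from all the $d_i$), (ii) the specialization point lying in the non-thin set $\Q \setminus Z$, and (iii) avoidance of the zero locus of $g$. Absolute irreducibility of $f$ is what makes the $K$-irreducibility of the specialization persist, and the combination of Lemma \ref{lem:thin_prod} with the non-thinness of $\Q \setminus Z$ is what makes the required specialization point exist.
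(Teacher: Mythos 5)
Your proof is correct and follows essentially the same route as the paper: specialize via Proposition \ref{prop:hit2} over the number field generated by the constraint data, intersect the thin set with $\Q^e$ (Proposition \ref{prop:fin-int}), use Lemma \ref{lem:thin_prod} and the non-thinness of $\Q\setminus Z$ to pick $(x_0,y_1^0,\ldots,y_{e-1}^0)\in(\Q\setminus Z)\times\Q^{e-1}$ off the thin set, adjoin a root of the specialized $f$, and check that the resulting basic open set is nonempty and misses $S_\beta(Z)$. The only cosmetic differences are that you pass to a Galois closure and argue by degrees in a diamond, where the paper takes $K=F(\vec b)$ and cites Lemma \ref{lemma:irreducibles} directly, and that you factor $g$ into $K$-irreducibles before invoking Proposition \ref{prop:hit2} (a point the paper glosses over); neither changes the substance.
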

\begin{proof}  Since $S_\beta(Z) \subseteq U_{\vec a}$, it suffices to show that 
$S_\beta(Z)$ is nowhere dense in $U_{\vec a}$.
Let $\vec b$ and $\vec c$ be any sequences of elements of $\Qbar$ such that 
$U_{(\vec a, \vec c),\vec b} \neq \emptyset$.  Let $F = \Q(\vec a, \vec c)$
and let $K = F(\vec b)$.
By the application of Hilbert's Irreducibility Theorem in Proposition \ref{prop:hit2}, there is a thin set $T\subseteq K^e$ such that for any $(x, y_1,\dots, y_{e-1})\in K^e\setminus T$, the polynomial $f(x, y_1, \dots, y_{e-1}, Y_e)$ is irreducible of degree $\dg_{Y_e}(f)$, and $g(x, y_1, \dots, y_{e-1}, Y_e)$  is not divisible by $f$.  
Because $K$ is a number field, $T_\QQ = T\cap \QQ^e$ is also a thin set in $\QQ^e$ by Proposition \ref{prop:fin-int}. 
Further, since $Z$ is not thin in $\QQ$, the thin set $T_\QQ$ does not contain all of $Z\times \QQ^{e-1}$ by Lemma \ref{lem:thin_prod}.
For any such tuple $(x, y_1,\dots, y_{e-1})\in Z\times \QQ^{e-1}$ outside this thin set, the irreducibility of $f(x, y_1,\dots, y_{e-1}, Y)$ over $K$ implies that adjoining to $F$ any root $y$ of $f(x, y_1,\dots, y_{e-1}, Y)$ will not generate any element of $K$:  we will have $F(y)\cap K=F$, by Lemma \ref{lemma:irreducibles}. Thus $U_{(\vec a, \vec c, y),\vec b}$ is nonempty.
Additionally, the divisibility condition implies that $g(x, y_1, \dots, y_{m-1},y)\neq 0$.
So for any $L \in U_{(\vec a, \vec c, y), b}$, $\beta(x)$ holds in $L$.
Therefore, $U_{(\vec a, \vec c, y),b} \cap S_\beta(Z) = \emptyset$.

There are only countably many $\beta$, so $S(Z)$ is a countable union of meager 
sets, and is thus meager.
\end{proof}

\begin{thm}
\label{thm:fullygeneral2}
%If $Z \subseteq \mathbb Q$ is not thin in $\mathbb Q$, then there is a meager set $S \subseteq \Subfields$ 
%such that for every $L \in \Subfields$, if $L$ has a coinfinite universally definable subset $A$ with $A \cap \mathbb Q = Z$, then $L \in S$.
%
If $Z\subset\Q$ is not thin, then the following is meager in $\Subfields$:
\begin{align*}
	S_Z
	&=\set{L\in\Subfields}{\text{some coinfinite $C\subseteq L$, universally definable in $L$, has~}C\cap\Q=Z}\\ 
	&=\bigcup_{A\subseteq \Qbar \colon A\cap \Q=Z} \{L \in \Subfields : A_L \text{ is coinfinite and universally definable in }L\}	
\end{align*}  
\end{thm}
\begin{proof}
Let $S = S(Z)$.  By Proposition \ref{prop:induction}, 
$S$ is meager.  Let $A \subseteq L$ be coinfinite with $A \cap \mathbb Q = Z$.
Suppose that $A$ is universally definable in $L$.  Then
$L \setminus A$ is existentially definable in $L$.  So by Theorem 
\ref{thm:normal-form}, $L\setminus A$ is definable in $L$ by a formula 
$\alpha = \vee_{i<r} \beta_i$ in normal form.  Because $L\setminus A$ is infinite 
and $r$ is finite, some $\beta_i$ must be an absolutely irreducible 
hypersurface formula, and 
$$\set{x \in L }{ \beta_i(x) \text{ holds over } L} \cap A = \emptyset.$$  
So $\set{x\in L}{ \beta_i(x) \text{ holds over }L} \cap Z = \emptyset$.
By definition, $L \in S_{\beta_i}(Z)$, as needed.
\end{proof}

As a corollaries we have the following.
\begin{thm}
\label{thm:meager_2}
The set of all fields $L\in \Subfields$ such that $\OO_L$ is either existentially or universally definable in $L$ is meager.
\end{thm}
\begin{proof}
Recall that $\ZZ = \QQ\cap \OO_L$ for all subfields $L$.
By Proposition \ref{thin}, neither $\Z$ nor $\Q\setminus\Z$ is thin in $\Q$.
Applying Theorem \ref{thm:fullygeneral2} to $Z=\mathbb Z$ and $Z=\mathbb Q \setminus \mathbb Z$, we see that there is a meager set $S$ such that if either $\OO_L$ or $L\setminus \OO_L$ is universally
definable in $L$, then $L \in S$.
\end{proof}

\begin{cor}
\label{cor:Z_2}
The set of fields $L\in \Subfields$ such that $\Z$ itself is either existentially or universally
definable in $L$ is meager.
\end{cor}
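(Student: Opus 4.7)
The plan is to deduce this immediately from Porism \ref{cor:intersection}, which was proved as part of Theorem \ref{cor:meager_2} and applies to arbitrary infinite sets $A \subseteq L$, not just rings of integers. In particular, the same meager set $S$ from equation (\ref{S}) in the proof of Theorem \ref{cor:meager_2} will witness meagerness here, so no new topological machinery is required. The only content is to check that the hypotheses of the porism are met in each of the two cases.

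First I would handle the existential case. Suppose $\Z$ is existentially definable in $L \in \Subfields$. Taking $A = \Z$, this set is infinite and $A \cap \Q = \Z \subseteq \Z$, so Porism \ref{cor:intersection} applies directly to give $L \in S$. Next, for the universal case, suppose $\Z$ is universally definable in $L$, equivalently that $A := L \setminus \Z$ is existentially definable in $L$. Since $A \supseteq \Q \setminus \Z$, the set $A$ is infinite; moreover $A \cap \Q = \Q \setminus \Z \subseteq \Q \setminus \Z$, so again Porism \ref{cor:intersection} yields $L \in S$.

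Combining these, the set of fields $L$ in which $\Z$ is either existentially or universally definable is a subset of $S$, and hence meager by the proof of Theorem \ref{cor:meager_2}. There is no real obstacle here: the whole point of stating the porism in terms of an arbitrary infinite set $A$ with $A \cap \Q$ on one side of $\Z$ was precisely to make this corollary (and potentially others about the definability of particular infinite subsets of $\Q$) fall out as a one-line application.
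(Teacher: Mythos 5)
Your proposal is correct and is exactly the paper's intended argument: Corollary \ref{cor:Z_2} is stated immediately after Porism \ref{cor:intersection} precisely because it follows by applying the porism to $A=\Z$ (existential case) and to $A=L\setminus\Z$ (universal case), placing both kinds of fields inside the meager set $S$ from (\ref{S}).
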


We can also use the same approach when considering the definability of
number fields.  Of course $\emptyset$ is a thin subset of $\Q$, so Theorem~\ref{thm:fullygeneral2} does not directly rule out an existential definition of $\Q$ in a generic algebraic extension $L$.  Nevertheless, we have the following.

\begin{cor}
\label{cor:Q}
 If $F$ is a number field, then the set of fields $L\in \Subfields$ containing $F$ such that $F$ has an existential definition in $L$ is a meager set.
\end{cor}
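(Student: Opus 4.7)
The plan is to reduce Corollary \ref{cor:Q} to Porism \ref{cor:intersection}. The naive approach of taking $A=F$ itself fails, because $F\cap\Q=\Q$ is contained in neither $\Z$ nor $\Q\setminus\Z$, so the porism does not apply directly. The key maneuver is to work with the complement: if $F$ has an existential definition in $L$, then $L\setminus F$ has a universal definition in $L$, so we can try to apply the porism with $A=L\setminus F$.

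Next I would verify the hypothesis $A\cap\Q\subseteq\Z$. Since $F$ is a number field, $\Q\subseteq F$, and hence
$$ (L\setminus F)\cap\Q = \Q\setminus F = \emptyset, $$
which is trivially contained in $\Z$. So the intersection hypothesis of the porism is satisfied for free.

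The one condition left to check is that $A$ is infinite. I would split into cases on whether $L=F$. If $L=F$, the collection of such $L$ inside $\Subfields$ consists of the single point $\{F\}$, which is nowhere dense because Proposition \ref{prop:homeo} identifies $\Subfields$ with Cantor space, and in particular $\Subfields$ has no isolated points. If instead $L\supsetneq F$, pick any $\alpha\in L\setminus F$; then the translates $\alpha+n$ for $n\in\Z\subseteq F$ are distinct elements of $L\setminus F$ (since $\alpha+n\in F$ would force $\alpha\in F$), so $A=L\setminus F$ is infinite.

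With all hypotheses in place, Porism \ref{cor:intersection} applies and places every such $L$ into the meager set $S$ defined in (\ref{S}). Thus the collection of $L\in\Subfields$ containing $F$ for which $F$ is existentially definable in $L$ is contained in $S\cup\{F\}$, which is a meager subset of $\Subfields$. There is no serious obstacle in this argument; the heavy lifting has already been carried out in the normal form theorem and the porism, and the only real idea is the swap from $F$ to $L\setminus F$, which converts an intersection with $\Q$ that was too large into one that is empty.
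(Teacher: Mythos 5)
Your reduction hinges on applying Porism \ref{cor:intersection} to the universally definable set $A = L\setminus F$ with $A\cap\QQ=\emptyset$, and that is exactly where the argument breaks. The porism is not proved independently; it is only the remark that the proof of Theorem \ref{cor:meager_2} ``has shown'' it, and what that proof actually establishes is (i) the existential case as stated, and (ii) the universal case only by passing to the complement $L\setminus A$, which must then itself satisfy the hypotheses of the existential case: infinite, existentially definable, and with rational trace contained in $\ZZ$ or in $\QQ\setminus\ZZ$. In Theorem \ref{cor:meager_2} this distinction is invisible because there $A\cap\QQ$ equals $\ZZ$ exactly, so the complement traces to exactly $\QQ\setminus\ZZ$. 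In your application the complement is $F$ itself, whose trace on $\QQ$ is all of $\QQ$, contained in neither $\ZZ$ nor $\QQ\setminus\ZZ$. Unpacking the porism, you would put $F$ into the normal form of Theorem \ref{thm:normal-form}; but nothing forces any hypersurface disjunct of that normal form to define a subset of $\QQ$ lying inside $\ZZ$ or inside $\QQ\setminus\ZZ$, so no contradiction with $L\notin S$ arises and membership of $L$ in the set $S$ of (\ref{S}) cannot be concluded. In short, the literal wording of the porism is broader than what its proof delivers, and your argument uses precisely the unestablished part: an infinite universally definable set whose intersection with $\QQ$ is empty.

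This is also why the paper's own proof of Corollary \ref{cor:Q} needs genuine number-theoretic input: it invokes Park's universal definition of $\OO_F$ in $F$ \cite{P13} (extending Koenigsmann \cite{K16}), i.e.\ an existential definition of $F\setminus\OO_F$, and composes it with the existential definition $\gamma$ of $F$ in $L$ to manufacture an existentially definable set $A\subseteq L$ that is infinite with $A\cap\QQ=\QQ\setminus\ZZ$; then the existential case of the porism applies directly. Your shortcut uses no such input, and if it were valid it would show that for comeager many $L$ no existentially definable, co-infinite subset of $L$ can contain $\QQ$ --- a statement that is nowhere proved in the paper and does not follow from the meagerness of $S$, whose definition only constrains hypersurface formulas whose rational trace is one-sided (inside $\ZZ$ or inside $\QQ\setminus\ZZ$). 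To repair the proof you would have to either establish that stronger statement or follow the paper's route through the Koenigsmann--Park definition.
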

\begin{proof}
Let $S$ be the meager set guaranteed by Theorem \ref{thm:fullygeneral2} for 
$Z = \Z$.
By Park's  generalization \cite{P13} of a theorem of
 Koenigsmann  \cite{K16},  there is a quantifier-free formula $\phi(X,Y_1,\ldots,Y_n)$,
in the language of fields, such that $\exists \vec{Y} \phi(X,\vec{Y})$ defines the algebraic non-integers $F\setminus \OO_F$
in the field $F$.  In particular, it defines $\QQ\setminus \ZZ$ in $\QQ$ over $F$.  Now if $\gamma(Y)$ is existential and defines $F$ in $L$,
then the following formula with free variable $X$,
$$ \gamma(X)~\&~\exists \vec{Y} [~\gamma(Y_1)~\& \cdots \&~\gamma(Y_n)~\&~ \phi(X, \vec{Y})~], $$
is an existential definition of $F\setminus \OO_F$ in $L$.  So $(L\setminus F)\cup \OO_F$
is universally definable in $L$.  Since $((L\setminus F)\cup \OO_F)\cap \Q = \Z$,
we have $L \in S$.
\end{proof}

 \subsection{Computable fields whose algebraic integers are not one-quantifier definable}

Next we effectivize Theorem \ref{thm:meager_2} to obtain
 many computable algebraic extensions of $\QQ$ whose 
algebraic integers are not existentially or universally definable.

Our arguments below will require the decidability of absolute irreducibility.
Recall some standard terminology:  a computable field $E$
has a \emph{splitting algorithm} if the \emph{splitting set}
$S_E=\set{f\in E[T]}{f\text{~is reducible in~}E[T]}$ is decidable, and has a \emph{root algorithm}
if the \emph{root set} $R_E =\set{f\in E[T]}{f\text{~has a root in~}E}$
\label{rootset}
is decidable.  Notice that these are both stated for single-variable polynomials.
The next lemma is a specific case of the fact that splitting algorithms
can be extended to more variables.  
\begin{lemma}
\label{lemma:absirreducibility}
Fix any computable presentation of $\Qbar$.  Then it is decidable
which polynomials in $\Qbar[X_1,X_2,\ldots]$ are absolutely irreducible.
\end{lemma}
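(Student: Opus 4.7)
My plan is to reduce absolute irreducibility of $f \in \Qbar[X_1,\ldots,X_n]$ to irreducibility testing over a number field. Since $\Qbar$ is already algebraically closed, absolute irreducibility of $f$ coincides with irreducibility of $f$ in $\Qbar[X_1,\ldots,X_n]$. Only finitely many coefficients of $f$ are nonzero, and together they generate a number field $F \subseteq \Qbar$. Any computable presentation of $\Qbar$ lets us compute minimal polynomials over $\QQ$ for each coefficient — for instance by clearing denominators and invoking the classical splitting algorithm over $\QQ[T]$ — and hence to obtain an effective presentation of $F$ as a finite extension of $\QQ$.

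Next, I would apply any of the standard effective algorithms for factoring multivariate polynomials over number fields (e.g.\ Trager's algorithm, or reduction to the univariate case by Kronecker substitution) to compute a decomposition $f = f_1 \cdots f_r$ in $F[X_1,\ldots,X_n]$. If $r \geq 2$, then $f$ is already reducible over $F$ and is certainly not absolutely irreducible. Otherwise $f$ is irreducible over $F$, and the remaining question is whether this single factor $f$ stays irreducible over all of $\Qbar$.

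For this last step I would invoke the following Galois-theoretic bound, which is essentially the content of Proposition~\ref{prop:missinglemma}: if $f$ is irreducible over $F$ but factors as $g_1\cdots g_k$ with $k\geq 2$ in $\Qbar[X_1,\ldots,X_n]$, then $\Gal{\Qbar}{F}$ permutes the $g_i$ transitively (up to unit scalars in $\Qbar$), so the coefficients of any particular $g_i$ lie in a subfield of $\Qbar$ of degree at most $k \leq \deg f$ over $F$. Equivalently, the constant field of $\Frac(F[X_1,\ldots,X_n]/(f))$ strictly contains $F$, and any such enlargement is witnessed inside a Galois extension of $F$ of degree bounded in terms of $\deg f$. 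It therefore suffices to enumerate finite extensions $K \subseteq \Qbar$ of $F$ of degree at most $\deg f$ — which can be done effectively by running over monic irreducible polynomials in $F[T]$ of bounded degree and adjoining their roots via the splitting algorithm — and, for each such $K$, to test whether $f$ factors nontrivially in $K[X_1,\ldots,X_n]$ using the same number-field factoring algorithm as before. The polynomial $f$ is absolutely irreducible precisely if no such nontrivial factorization is ever found.

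The main obstacle is not conceptual but one of assembly: each of the three ingredients — the splitting algorithm for $\QQ[T]$ (and hence for the given computable $\Qbar$), multivariate polynomial factorization over a number field, and bounded-degree enumeration of finite extensions of a number field — is a classical result in computable algebra. The proof amounts to checking that these standard subroutines compose uniformly into a single decision procedure relative to our computable presentation of $\Qbar$, with the effective bound $k \leq \deg f$ ensuring that the enumeration in the final step terminates.
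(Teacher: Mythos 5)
Your reduction to a number field $F$, the use of a multivariate factorization algorithm over $F$, and the Galois-theoretic observation are all fine as far as they go: if $f$ is irreducible over $F$ but splits into $k\geq 2$ absolutely irreducible factors, then (after normalizing each factor so that a chosen coefficient equals $1$) $\Gal{\Qbar}{F}$ permutes the factors transitively, so the coefficients of one factor lie in an extension of $F$ of degree $k\leq \deg{f}$, and hence $f$ factors nontrivially over some extension of degree at most $\deg{f}$. The genuine gap is in the final step: bounding the \emph{degree} of the extensions to be examined does not bound their \emph{number}. There are infinitely many subfields $K\subseteq\Qbar$ with $[K:F]\leq\deg{f}$ (already infinitely many quadratic extensions of $F$), so the instruction ``enumerate all such $K$ and declare $f$ absolutely irreducible if no nontrivial factorization is ever found'' is not a decision procedure: it halts only when $f$ is absolutely reducible, and thus merely semi-decides reducibility over $\Qbar$ --- something one gets more directly by searching over candidate factors with coefficients in $\Qbar$ and checking them by multiplication. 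The closing claim that the bound $k\leq\deg{f}$ ``ensures that the enumeration in the final step terminates'' is exactly the point at which the argument fails. To salvage your strategy you would need one effectively computable finite extension over which the absolute factorization is guaranteed to be realized (for instance, an algorithm computing the constant field of $\Frac(F[X_1,\ldots,X_n]/(f))$, i.e.\ the algebraic closure of $F$ in the function field), or some other effective criterion such as Noether's absolute-irreducibility forms; a degree bound alone is not enough.

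For comparison, the paper sidesteps the issue entirely and never passes through number fields: since $\Qbar$ is algebraically closed, it trivially has a splitting algorithm (every polynomial in $\Qbar[T]$ of degree greater than $1$ is reducible), and Kronecker's theorem that a splitting algorithm for a computable field extends to a simple transcendental extension gives a splitting algorithm for $\Qbar(X_1,\ldots,X_{n-1})$. Irreducibility in $\Qbar[X_1,\ldots,X_n]$ --- which, as you note, is the same as absolute irreducibility --- is then decided by testing irreducibility over this rational function field together with the decidable condition that the coefficients, viewed in $\Qbar[X_1,\ldots,X_{n-1}]$, have no common factor, and one inducts on the number of variables. If you rework your argument along either of the lines indicated above, please make the termination of every search explicit.
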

\begin{proof}
$\Qbar$ has a splitting algorithm, of course:  all polynomials in $\Qbar[T]$
of degree $>1$ are reducible.  The lemma now follows from another
theorem of Kronecker (found in \cite[$\S\S$ 58-59]{E84}), stating that whenever
a computable field $F$ has a splitting algorithm and $t$ is transcendental over $F$
(within a larger computable field), the field $F(t)$ also has a splitting algorithm.
The irreducible polynomials of $\Qbar[X_1,X_2]$ are precisely the irreducible polynomials of $\Qbar[X_1]$ along with the polynomials which are irreducible in $\Qbar(X_1)[X_2]$ and have no common factor among the coefficients lying in $\Qbar[X_1]$; see \cite[Theorem IV.2.3]{lang}.
Therefore, reducibility is clearly decidable
using Kronecker's result.  Thus we can decide reducibility
in $\Qbar[X_1,X_2]$, and one continues by induction
on the number $n$ of variables, noting that the resulting decision procedures
are uniform in $n$.
\end{proof}

Therefore, there is a computable listing $\beta_1,\beta_2,\dots$ of 
all absolutely irreducible hypersurface formulas.  Furthermore, we have the following 
effective version of Proposition \ref{prop:induction}.

\begin{prop}\label{prop:computable-extension}
Let $Z$ be a computable subset of $\QQ$ that is 
not thin in $\QQ$.  Then there is an algorithm which, given any 
absolutely irreducible hypersurface formula $\beta$ with coefficients $\vec a$,
 and any $\vec c, \vec b$ such that 
 $U_{(\vec a, \vec c), \vec b} \neq \emptyset$,
returns $y$ such that $U_{(\vec a, \vec c, y),\vec b}$ is non-empty
and has empty intersection with  $S_\beta(Z)$.
\end{prop}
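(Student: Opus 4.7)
The plan is to effectivize the proof of Proposition \ref{prop:induction}. That argument produced a witness $y$ by (a) selecting a tuple $(x, y_1, \dots, y_{e-1}) \in (\Q\setminus Z)\times\Q^{e-1}$ outside the thin set supplied by Proposition \ref{prop:hit2}, and then (b) taking $y$ to be any root of the (then irreducible) specialisation $f(x, y_1, \dots, y_{e-1}, Y_e)$ over $K = F(\vec{b})$. I would replace the existential choice in step (a) by an unbounded computable search, with termination guaranteed by the same thin-set argument.

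Concretely, fix a computable presentation of $\Qbar$ and build the computable number fields $F = \Q(\vec{a}, \vec{c})$ and $K = F(\vec{b})$. Number fields carry splitting algorithms, so irreducibility and coprimality computations in $K[Y_e]$ are decidable, uniformly in the input. Now enumerate tuples $(x, y_1, \dots, y_{e-1}) \in (\Q\setminus Z)\times\Q^{e-1}$, which is possible since $Z$ is computable, and for each candidate test the two decidable conditions
\[
(\mathrm{i})\ f(x, y_1, \dots, y_{e-1}, Y_e)\text{ is irreducible in }K[Y_e],\qquad (\mathrm{ii})\ \gcd_{K[Y_e]}\!\bigl(f(x, y_1, \dots, y_{e-1}, Y_e),\, g(x, y_1, \dots, y_{e-1}, Y_e)\bigr) = 1.
\]
For the first successful tuple, locate any root $y\in \Qbar$ of $f(x, y_1, \dots, y_{e-1}, Y_e)$ by enumerating elements of the computable copy of $\Qbar$ until one is annihilated by this polynomial, and return that $y$.

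Termination rests on the content of Proposition \ref{prop:induction}: Proposition \ref{prop:hit2} applied to $f$ and $g$ over $K$ produces a thin set $T \subseteq K^e$ outside which (i) and (ii) hold automatically, Proposition \ref{prop:fin-int} forces $T\cap\Q^e$ to be thin in $\Q^e$, and Lemma \ref{lem:thin_prod} together with the hypothesis that $\Q\setminus Z$ is not thin in $\Q$ prevents $(\Q\setminus Z)\times\Q^{e-1}$ from being swallowed by that thin set. Correctness then follows exactly as in Proposition \ref{prop:induction}: condition (i) together with Lemma \ref{lemma:irreducibles} gives $F(y)\cap K = F$, so $F(y)$ itself witnesses $U_{(\vec{a}, \vec{c}, y), \vec{b}} \neq \emptyset$; and (ii) gives $g(x, y_1, \dots, y_{e-1}, y) \neq 0$, so for every $L\in U_{(\vec{a}, \vec{c}, y), \vec{b}}$ the tuple $(y_1, \dots, y_{e-1}, y)$ witnesses $\beta(x)$ over $L$ while $x\notin Z$, forcing $L\notin S_\beta(Z)$.

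The only obstacle worth naming is packaging the decision procedures and root-finding so that the search is performed uniformly in $\vec{a}, \vec{b}, \vec{c}$ and $\beta$. That packaging is routine once a computable copy of $\Qbar$ with decidable equality and the classical splitting algorithm for number fields are in hand; the substantive mathematical work sits entirely in Proposition \ref{prop:induction}, which supplies the existence of a good tuple, while the algorithmic wrapper merely finds one by exhaustive search.
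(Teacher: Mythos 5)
Your proposal is correct and takes essentially the same route as the paper: it effectivizes Proposition~\ref{prop:induction} by an unbounded computable search whose termination is guaranteed by the same thin-set existence argument. The only difference is bookkeeping: the paper's algorithm searches directly for a tuple $(x,y_1,\dots,y_{e-1},y)$ with $x\in\QQ\setminus Z$, with $f=0\neq g$ at that tuple, and with $U_{(\vec a,\vec c,y),\vec b}\neq\emptyset$ (all checked directly, with no splitting algorithm needed), whereas you test irreducibility and coprimality of the specializations over $K$ and then deduce those facts via Lemma~\ref{lemma:irreducibles}; both verifications are sound.
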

\begin{proof}
The proof of Proposition \ref{prop:induction} shows that there is a tuple
$(x,y_1,\dots,y_{e-1},y) \in Z \times \QQ^{e-1} \times \Qbar$
which witnesses that $\beta(x)$ holds in each field extending $\Q(y)$ while
keeping $U_{(\vec a, \vec c, y), \vec b}$ non-empty.
So an algorithm can search all such $x,y_1,\dots,y_{e-1},y$ until 
it finds one.  This works because $Z$ is computable, and it 
is computable to check whether a given tuple from $\Qbar$ satisfies 
the polynomials appearing in $\beta$, and computable to check
whether $U_{(\vec a, \vec c, y), \vec b}$ is empty.
\end{proof}

\begin{thm}
\label{thm:computable_fields}
Let $Z\subseteq \Q$ be a computable subset which is neither thin nor co-thin. For every pair of $\Qbar$-tuples $(\vec a, \vec b)$, if $U_{\vec a, \vec b}$ is nonempty, then there is a computable 
$L \in U_{\vec a, \vec b}$ which enjoys the following property: 
If $A\subseteq L$ is any subset such that $A\cap \Q = Z$, then $A$ is neither existentially nor 
universally definable in $L$.  Moreover, every computable presentation
of $L$ has a splitting algorithm.
\end{thm}
\begin{proof}
We recursively define sequences $\vec a = \vec a_0, \vec a_1, \dots$ and $\vec b = \vec b_0, \vec b_1, \dots$
in stages as follows.  Recall that $\beta_1,\beta_2,\dots$ is a computable listing of 
all absolutely irreducible hypersurface formulas.  Let $c_1,c_2,\dots$ be a computable 
listing of all elements of $\Qbar$.

At stages of the form $s = 3t+1$, given $U_{\vec a_{s-1}, \vec b_{s-1}}$ nonempty, use 
Proposition \ref{prop:computable-extension} to find a $y$ such that 
$U_{(\vec a_{s-1},y),\vec b_{s-1}}$ is non-empty and 
disjoint from $S_{\beta_t}(Z)$.  Let
$\vec a_s = (\vec a_{s-1},y)$ and $\vec b_s = \vec b_{s-1}$.

At stages of the form $s = 3t+2$, use an analogous process to avoid 
$S_{\beta_t}(\QQ\setminus Z)$.

At stages of the form $s = 3t + 3$, consider $U_{(\vec a_{s-1},c_t), \vec b_{s-1}}$
and if it is nonempty, set $\vec a_s = (\vec a_{s-1},c_t), \vec b_s = \vec b_{s-1}$.
Otherwise, set $\vec a_s = \vec a_{s-1}$ and $\vec b_s = (\vec b_{s-1},c_t)$.

Because $\Subfields$ is a compact topological space by Proposition \ref{prop:homeo} and the family of closed subsets $\{U_{\vec a_s, \vec b_s} : s\geq 0\}$ has the finite intersection property, it follows that the intersection $\cap_{s} U_{\vec a_s, \vec b_s}$ is nonempty. Moreover, the intersection is a singleton because after stage $3t+3$, the element $c_t$ is included in either all or none of the fields in $U_{\vec{a_s}, \vec{b_s}}$ by construction.
In particular, this unique field is $L = \{ a \in \Qbar : a \text{ appears in some } \vec a_s\}$. 

The field $L$ is computable because by stage $3t+3$ it has 
been decided whether $c_t$ is included.  If $A\subset L$ is any subset with $A\cap \Q = Z$, then  
Theorem \ref{thm:fullygeneral2} implies that $A$ is neither existentially nor universally definable in $L$ because the construction of $L$ explicitly avoids the sets $S(Z)$ and $S(\Q\setminus Z)$ by definition.

The splitting algorithm for $L$ follows from Rabin's Theorem (see \cite{R60}),
since $L$ is given as a decidable subfield of (our computable presentation of) $\Qbar$.
Finally, whenever $L\cong\widetilde{L}$ are computable algebraic fields,
their splitting sets are Turing-equivalent, so all computable presentations of $L$
have splitting algorithms.
\end{proof}

As an application, because $\Z$ is neither thin nor co-thin in $\Q$, the following is immediate.
\begin{cor}
 For every pair of $\Qbar$-tuples $(\vec a, \vec b)$, if $U_{\vec a, \vec b}$ is nonempty, then there is a computable 
$L \in U_{\vec a, \vec b}$ such that $\OO_L$ is neither existentially nor 
universally definable in $L$.  Moreover, every computable presentation
of $L$ has a splitting algorithm.
\end{cor}

\subsection{ The topological space of algebraic extensions of $\QQ$ up to isomorphism}  
\label{sec:top}

The questions of definability we have considered have the same answer over 
isomorphic fields.  
Although $\Subfields$ contains at least one isomorphic copy of every possible
algebraic extension of $\Q$,  it contains exactly one copy of an algebraic extension $L$ of $\QQ$ if and only if $L$ is Galois over $\QQ$. A number field $F$ of degree $n$ is isomorphic to at most $n$ fields in $\Subfields$, but there are some infinite non-Galois extensions of $\QQ$ which are isomorphic to uncountably many elements in $\Subfields$.
Therefore, given the isomorphism invariance of the property under consideration,
one might wonder if the results of the previous section have been 
skewed by the fact that some isomorphism classes are more represented 
in $\Subfields$ than others.

Thus it is also of interest to consider the collection of algebraic extensions of $\Q$ 
up to isomorphism as a topological 
space, as was done in \cite{M19}.  We denote this set by $\Subfields/\!\cong$.
 From the perspective of number theory, the set $\Subfields/\!\cong$ can be identified as a quotient of $\Subfields$ by the absolute Galois group $G = \Gal{\Qbar}{\Q}$, which equates isomorphic fields.  The topology on 
 $\Subfields/\!\cong$ is the 
 quotient topology which it inherits from $\Subfields$.  
  
 Alternatively, from the perspective of computability theory, 
 one could begin with the space $\mathcal{ALG}_0^*$
 of all possible presentations of 
 algebraic extensions of $\Q$ in a certain language. This is done 
 in \cite{M19} and the relevant language in this case is the language of rings enlarged to include additional predicates for the existence of roots of monic one-variable polynomials.  
 Equating isomorphic fields and taking the quotient topology
 leads to the space $\mathcal{ALG}_0^*/\!\cong$, which coincides with 
 $\Subfields/\!\cong$ despite various differences between $\mathcal{ALG}_0^*$
 and $\Subfields$.  For example, in $\mathcal{ALG}_0^*$, every isomorphism 
 class is represented with uncountably many copies.  
 For details about $\mathcal{ALG}_0^*$, we 
 refer the reader to \cite{M19}.

Returning now to $\Subfields/\!\cong$, 
observe that for any $U_{\vec a, \vec b}$, the following set is the smallest $G$-invariant
 subset of $\Subfields$ containing $U_{\vec a, \vec b}$.  It is also clopen, as there 
 are only finitely many images $\phi(\vec a), \phi(\vec b)$.
 $$GU_{\vec a, \vec b} := \{\phi(L) : L \in U_{\vec a, \vec b}, \phi \in G\} = \bigcup_{\phi \in G} U_{\phi(\vec a), \phi(\vec b)}$$
 It follows that the quotient map $q:\Subfields \rightarrow \Subfields/\!\cong$ is open 
 and the images of the sets $GU_{\vec a, \vec b}$ form a clopen basis
 for $\Subfields/\!\cong$. 
 
 \begin{prop} [Theorem 3.3, \cite{M19}]
 $\Subfields/\!\cong$ is homeomorphic to Cantor space.
 \end{prop}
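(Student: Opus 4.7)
The plan is to apply Brouwer's classical characterization: a topological space is homeomorphic to Cantor space if and only if it is nonempty, compact, metrizable, totally disconnected, and has no isolated points. Nonemptyness is immediate. Compactness of $\Subfields/\!\cong$ is automatic, as the continuous image of the compact space $\Subfields$ from Proposition \ref{prop:homeo}. The images of the countably many clopen sets $GU_{\vec{a}, \vec{b}}$ already furnished give a clopen basis for $\Subfields/\!\cong$, making it totally disconnected and second countable. Once the Hausdorff property is established, metrizability follows automatically (compact $+$ Hausdorff $+$ second countable). So the two substantive verifications are Hausdorff and the absence of isolated points.

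For the Hausdorff property, I would use the standard fact that two algebraic subfields of $\Qbar$ are isomorphic as abstract fields if and only if they share the same ``root profile'', namely for every irreducible $p \in \QQ[X]$ they contain the same number of roots of $p$. Given $L \not\cong L'$, fix such a $p$ and an integer $k$ so that, after possibly swapping $L$ and $L'$, the field $L$ contains at least $k$ roots of $p$ while $L'$ contains fewer. Let $\vec{a}$ be a tuple of $k$ roots of $p$ lying in $L$. Since $\Gal{\Qbar}{\QQ}$ permutes the roots of $p$, every field in $GU_{\vec{a}}$ contains at least $k$ roots of $p$; hence $GU_{\vec{a}}$ is a $G$-invariant clopen set containing $L$ but missing $L'$. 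Its image in the quotient together with the image of its complement furnish disjoint open neighborhoods of $[L]$ and $[L']$.

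For the absence of isolated points, let $L \in U_{\vec{a}, \vec{b}}$ be given and set $F = \QQ(\vec{a}, \vec{b})$. I would choose any prime $\ell > [F:\QQ]$ and take $\beta = \sqrt[\ell]{2}$, which has minimal polynomial $X^\ell - 2$ over $\QQ$ by Eisenstein. Consider the two candidate fields $L_0 = \QQ(\vec{a})$ and $L_1 = \QQ(\vec{a}, \beta)$. Clearly $L_0 \in U_{\vec{a}, \vec{b}}$ because $L_0 \subseteq L$. For $L_1$: multiplicativity of degrees together with the coprimality of $\ell$ and $[\QQ(\vec{a}):\QQ] < \ell$ forces $[L_1 : \QQ(\vec{a})] = \ell$, whence $[L_1 \cap F : \QQ(\vec{a})]$ divides $\gcd(\ell, [F:\QQ(\vec{a})]) = 1$, so $L_1 \cap F = \QQ(\vec{a})$. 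In particular $\vec{b} \cap L_1 = \vec{b} \cap F \cap L_1 = \vec{b} \cap \QQ(\vec{a}) = \emptyset$, placing $L_1 \in U_{\vec{a}, \vec{b}}$ as well. Finally, $L_1$ contains the element $\beta$ of degree $\ell$ over $\QQ$, whereas $[L_0 : \QQ] < \ell$ forbids $L_0$ from containing any element of degree $\ell$; hence $L_0 \not\cong L_1$, so at least one of them represents an isomorphism class distinct from $[L]$. This last step, where one must produce a field of a different isomorphism class inside an arbitrary basic open set while respecting the membership constraints on $\vec{a}$ and $\vec{b}$, is the main obstacle, and the coprimality trick provided by the large prime $\ell$ is the natural device for overcoming it.
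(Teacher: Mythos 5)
Your proposal is correct, and at the top level it follows the same skeleton as the paper (compactness, a countable clopen basis from the sets $GU_{\vec a,\vec b}$, and no isolated points, then the Brouwer characterization); the difference is that the paper delegates the substantive checks to \cite{M19}, while you carry them out directly. Your perfectness argument is a clean and correct instantiation of the paper's one-line remark that every nonempty $GU_{\vec a,\vec b}$ contains two non-isomorphic fields: the choice of a prime $\ell>[\QQ(\vec a,\vec b):\QQ]$ and the element $\sqrt[\ell]{2}$ makes all the degree and membership constraints work exactly as you say. The one point to flag is the Hausdorff step. The ``root profile'' criterion you invoke (two subfields of $\Qbar$ are isomorphic if and only if each irreducible $p\in\QQ[X]$ has the same number of roots in both) is true, but it is not a throwaway fact: under the Galois correspondence it says that two closed subgroups of $G=\Gal{\Qbar}{\QQ}$ having the same number of fixed points on every coset space $G/U$ with $U$ open must be conjugate, which one proves by passing to finite quotients (mutual subconjugacy of subgroups of a finite group forces conjugacy) and then a compactness argument to glue the finite-level conjugacies; this is essentially the classification result underlying the very theorem of \cite{M19} being cited, so you should either cite it explicitly or include that argument. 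If you want to avoid it altogether, Hausdorffness has a softer proof: $G$ is a compact group acting continuously on the compact Hausdorff space $\Subfields$ with orbits equal to the isomorphism classes (every isomorphism of subfields of $\Qbar$ extends to an element of $G$, as the paper already uses), so the orbit equivalence relation is the image of the compact set $G\times\Subfields$ under $(\phi,L)\mapsto(L,\phi(L))$, hence closed, and an open quotient of a compact Hausdorff space by a closed equivalence relation is Hausdorff. With Hausdorffness in hand, your remaining reductions (second countability, Urysohn metrizability, total disconnectedness from the clopen basis) are all fine.
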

 \begin{proof}
 The follows because $\Subfields/\!\cong$ is compact, has a countable clopen basis,
 and has no isolated points.  The last condition follows 
 because every non-empty $GU_{\vec a, \vec b}$
 contains at least two non-isomorphic fields.
  \end{proof}
   
Therefore, notions of meager and co-meager make sense in $\Subfields/\!\cong$.
We can easily transfer the all our results about $\Subfields$ to 
results about $\Subfields/\!\cong$ by replacing the sets $S_\beta(Z)$ 
with the following $G$-invariant sets.
\begin{defn}
For any $Z \subseteq \Q$, and any absolutely irreducible hypersurface
formula $\beta$ with coefficients from $\Qbar$, let 
$$GS_\beta(Z) = \bigcup_{\phi \in G} S_{\phi(\beta)}(Z),$$
where $\phi(\beta)$ denotes the result of applying $\phi$ 
to all coefficients appearing in $\beta$.  
\end{defn}
Since every $\phi \in G$ fixes every element of $\mathbb Q$,
each $GS_\beta(Z)$ is $G$-invariant. 

\begin{prop}\label{prop:GS_nowhere_dense}
 Let $\beta$ be an absolutely irreducible hypersurface 
formula with coefficients from $\Qbar$.
If $Z\subseteq \Q$ is not thin in $\mathbb Q$, 
then $q(GS_\beta(Z))$ is nowhere dense in $\Subfields/\!\cong$, where
 $q:\Subfields \rightarrow \Subfields/\!\cong$ 
is the quotient map.
\end{prop}
\begin{proof}
There are only finitely many coefficients in $\beta$, so only finitely
many possible outcomes for $\phi(\beta)$.  So $GS_{\beta}(Z)$ is a finite union of 
nowhere dense sets, and thus is nowhere dense. 
Additionally, since each $S_{\phi(\beta)}(Z)$ is closed, so is $GS_\beta(Z)$. 
Since $\Subfields \setminus GS_\beta(Z)$ is dense open
and $q$ is an open map, its image $q(\Subfields \setminus GS_\beta(Z))$
is dense open.  Therefore, by $G$-invariance of $GS_\beta(Z)$,
$q(GS_\beta(Z))$ is nowhere dense. 
\end{proof}

\begin{thm}
\label{thm:fullygeneral3}
If any set $Z\subset\Q$ is not thin, then the following is meager in $\Subfields/\!\cong$:
$$ S_Z=\set{[L]_{\cong}\in \Subfields/\!\cong\ }{\text{some coinfinite $C\subseteq L$, universally definable in $L$, has~}C\cap\Q=Z}.$$

\end{thm}
\begin{proof}
Let $S = \bigcup_{\beta} q(GS_\beta(Z))$, where $\beta$ 
ranges over all absolutely irreducible hypersurface formulas.  By Proposition \ref{prop:GS_nowhere_dense}, 
$S$ is meager.  Let $A \subseteq L$ be coinfinite with $A \cap \mathbb Q = Z$.
Suppose that $A$ is universally definable in $L$.  Then
by the same argument as in Theorem \ref{thm:fullygeneral2}, $L \in S_{\beta}(Z)$ for some $\beta$.  So $L \in GS_\beta(Z)$, as needed.
\end{proof}

Therefore,
we have the following analogues of the results of the previous section.

\begin{cor}
\label{cor:up_to_isom}
The following sets are meager in $\Subfields/\!\cong$:
\begin{enumerate}
\item The set of isomorphism types of fields $L$ in which $\OO_L$ is existentially or universally definable.
\item The set of isomorphism types of fields in which $\Z$ is existentially or universally definable.
\item The set of isomorphism types of fields $L$ in which some number field $F\subset L$ is existentially definable.
\end{enumerate}
\end{cor}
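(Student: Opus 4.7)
The plan is to reduce each of the three statements to the preceding proposition, which already shows that $q(S)$ is meager in $\Subfields/\!\cong$, where $S$ is the set defined in equation (\ref{S}) in the proof of Theorem \ref{cor:meager_2}. For each assertion (1)--(3), I will exhibit a $G$-invariant subset $A \subseteq \Subfields$ containing the preimage under $q$ of the set in question, and show $A \subseteq S$. Then $q(A) \subseteq q(S)$ is meager, and since $q$ is a surjection, the set of isomorphism types in question lies inside $q(A)$ and so is meager. The bulk of the work has already been done in the previous section; the point is to re-package it correctly.

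For part (1), the argument in the proof of Theorem \ref{cor:meager_2} shows directly that if $\OO_L$ is either existentially or universally definable in $L$, then $L \in S$. So I can take $A$ to be precisely the set of such $L$; its $G$-invariance is immediate from isomorphism invariance of existential and universal definability, and the inclusion $A \subseteq S$ is what was proved there.

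For part (2), I will apply Porism \ref{cor:intersection}. If $\Z$ is existentially definable in $L$, then $A = \Z$ is infinite and $A \cap \Q = \Z$, so $L \in S$. If $\Z$ is universally definable in $L$, then $L \setminus \Z$ is existentially definable; this set is infinite (as $L$ is infinite) and $(L \setminus \Z) \cap \Q = \Q \setminus \Z$, so again $L \in S$ by the Porism. Hence the set of $L$ for which $\Z$ has a one-quantifier definition is contained in $S$.

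For part (3), I use Corollary \ref{cor:Q}, whose proof actually establishes (via Park's universal definition of $\OO_F$ and another application of Porism \ref{cor:intersection}) the stronger statement $L \in S$. So I can take $A = \bigcup_F A_F$, where $F$ ranges over all number fields and $A_F$ is the set of $L \supseteq F$ in which $F$ is existentially definable in $L$; each $A_F \subseteq S$, so $A \subseteq S$. The image $q(A)$ then contains all isomorphism types of fields in which some number field is existentially definable, and lies in the meager set $q(S)$.

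The main conceptual obstacle is the one already addressed by the preceding proposition: transferring meagerness through the quotient map $q$. Once that is in hand, the corollary follows by simply re-reading the earlier results (Theorem \ref{cor:meager_2}, Porism \ref{cor:intersection}, and Corollary \ref{cor:Q}) to extract the stronger conclusion that the relevant bad fields land inside the specific meager set $S$, rather than merely in some meager set.
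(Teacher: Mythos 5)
Your proposal is correct and matches the paper's argument, which simply observes that all three sets are contained in $q(S)$ and invokes the preceding proposition; your elaborations for (1)--(3) via Theorem \ref{cor:meager_2}, Porism \ref{cor:intersection}, and the proof of Corollary \ref{cor:Q} are exactly the intended justifications.
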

\begin{proof}
These sets are all contained in the set $S$ guaranteed by Theorem \ref{thm:fullygeneral3} when $Z = \mathbb Z$.
\end{proof}

  It may seem equally natural to consider the Lebesgue measure
on Cantor space and transfer it to $\Subfields/\!\cong$,
using some computable homeomorphism such as that obtained in 
\cite[Theorem 3.3]{M19}.  This is attempted to some extent in
\cite{M19}, but the resulting measure is not canonical:  it depends to a great extent
on arbitrary choices that are made during the construction of the homeomorphism.  Indeed, the notion of
\emph{Haar-compatible measure}, put forth in \cite{M19}, has had
to be abandoned, as the reality is more complicated than the analysis
in that article recognized.  We hope to investigate this situation, and measure-theoretic perspectives in general, more fully
in the near future.

\end{document}